\documentclass[11pt,a4paper]{amsart}
\usepackage[headings]{fullpage}
\usepackage{amssymb,amscd,hyperref}
\usepackage{amsthm}
\usepackage[alphabetic, nobysame]{amsrefs}
\usepackage[all,cmtip]{xy}
\usepackage{color}
\usepackage{xcolor}
\usepackage{tikz-cd} 
\numberwithin{equation}{section}
\newtheorem{thm}{Theorem}[section]
\newtheorem{lem}[thm]{Lemma}
\newtheorem{prop}[thm]{Proposition}
\newtheorem{cor}[thm]{Corollary}

\theoremstyle{definition}
\newtheorem{defn}[thm]{Definition}
\newtheorem{ex}[thm]{Example}

\theoremstyle{remark}
\newtheorem{rem}[thm]{Remark}

\DeclareMathOperator{\colim}{colim}
\def\Z{\mathbb{Z}}
\def\Q{\mathbb{Q}}

\def\GHH{\underline{\mathsf{HH}}}

\def\Tor{\mathsf{Tor}}

\def\mack{\mathsf{Mack}}

\def\tamb{\text{-}\mathsf{Tamb}}
\def\coind{\mathsf{Coind}}

\def\1mbf{\mathbf{1}}
\def\2mbf{\mathbf{2}}

\def\fix{\mathrm{fix}}

\newcommand{\und}[1]{{\underline{#1}}}
\def\norm{\mathsf{norm}}

\def\tr{\mathsf{tr}}
\def\res{\mathsf{res}}

\def\ra{\rightarrow}
\def\leq{\leqslant}
\def\geq{\geqslant}

\def\cL{\mathcal{L}}

\def\HH{\mathsf{HH}}
\def\HR{\mathsf{HR}}

\def\ie{\emph{i.e.}}

\def\id{\mathrm{id}}

\begin{document}
\title{Galois and separable extensions of Tambara functors}

\author{Birgit Richter}
\address{Fachbereich Mathematik der Universit\"at Hamburg,
  Bundesstra{\ss}e 55, 20146 Hamburg,  Germany}
\email{birgit.richter@uni-hamburg.de}

\date{\today}

\begin{abstract}
For a map of Tambara functors $\und{R} \ra \und{T}$ we define when
$\und{T}$ is separable over $\und{R}$ and if $\und{T}$ carries an
action by some finite group $H$ that fixes $\und{R}$ we also define
when $\und{T}$ is an $H$-Galois extension of Tambara functors. We show
that flat and separable extensions are (formally) \'etale in the sense
of Hill and we prove that Galois extensions are separable. We express
Nullstellensatzian Tambara functors in the context of Galois theory. 
\end{abstract}
\maketitle  

\section{Introduction}

In classical algebra, the difference between ordinary commutative rings and fields is huge. Their intrinsic algebraic properties are very different and they differ
drastically with respect to their category of modules: modules over a ring can behave pretty erratic whereas linear algebra tells us how to control modules (aka vector spaces)
over fields. In analogy with the Galois theory of fields there is a Galois theory for commutative rings, where one has to 
require more properties from an inclusion $R \subset T$ of commutative rings so that it behaves similar to a $H$-Galois extension of fields for a finite group $H$.
In particular, on top of the usual condition that $R$ agrees with $T^H$, the $H$-fixed points of $T$, one has to add a condition that ensures that the extension is unramified by asking
that $T \otimes_R T$ is isomorphic to $\prod_H T$ via the map that sends $t_1 \otimes t_2$ to $(t_1\cdot h(t_2))_{h \in H}$. For instance the extension $\Z \subset \Z[i]$ is
\emph{not} $C_2$-Galois because the above map fails to be surjective,
but if one inverts the ramified prime $2$, then $\Z[\frac{1}{2}] \ra \Z[\frac{1}{2},i]$ \emph{is} a $C_2$-Galois extension.

For Tambara functors the distinction between ``fields'' and ordinary Tambara functors is less prominent. Nakaoka defined field-like $G$-Tambara functors \cite{nakaoka}. They can be characterized as those $G$-Tambara functors  whose free level $\und{T}(G/e)$
has no non-trivial $G$-invariant ideals and that have injective restriction maps associated to equivariant maps of finite transitive
$G$-sets \cite[Theorem 4.32]{nakaoka}. However, this does \emph{not} imply that all levels are fields or that modules over such a field-like Tambara functor are necessarily free.

Nonetheless, there are classification results for Tambara fields. For instance Wisdom classifies field-like Tambara functors for cyclic group of prime power
order \cite{wisdom}. We also understand Tambara functors that behave like algebraically closed fields thanks to work of Schuchardt, Spitz, and Wisdom \cite{ssw}. But it was not clear how to relate
these notions to Galois theory. Can we interpret such an algebraically closed object as sitting at the end of a chain of Galois extensions?

This paper aims to study Galois extensions of Tambara functors and to relate them to \'etale and separable extensions.

In  \cite{hill} Mike Hill defined formal \'etaleness in the context of Tambara
functors by introducing an equivariant version of the module of
K\"ahler differentials. These differentials are linear and satisfy a
Leibniz rule as in the non-equivariant case, but their equivariant
nature is reflected in the fact that they also have to obey a twisted
Leibniz rule that involves norms, transfers and restrictions. Examples
of formally \'etale extensions mostly originating in non-equivariant Galois
extensions were established in \cite{lrz}. Noah Wisdom extended this
range of examples drastically in \cite{wisdom-affine} and proved
several structural properties about (formally) \'etale maps \cite[\S
4]{wisdom-affine}. In \cite{mqs} K\"ahler differentials were related
to an equivariant version of Hochschild homology. We describe this
relation from a slightly different angle in Section \ref{sec:hh}.  

Non-equivariantly, extensions which are flat and separable are 
formally \'etale, and Galois extensions of commutative rings are
separable. Hence these types of algebras give natural examples for formally  
\'etale maps. In this paper we define separability (see Section
\ref{sec:etaleseparable}) and Galois extensions for Tambara
functors with analogous properties. As separability is defined via the
module category of a 
ring, there is a straightforward generalization in the equivariant
context. We use the concept of separability for Tambara functors to
prove that some equivariant Loday constructions in the sense of
\cite{lrz-loday} have vanishing homotopy groups in positive degrees (see Section \ref{sec:appl-loday}).  

There is a well-developed notion of Galois extensions of commutative
rings \cite{auslandergoldman,chr,greither}. We define Galois
extensions of Tambara functors in a similar manner in Section 
\ref{sec:galois} and prove that these extensions are always
separable. For some other properties that hold for Galois extensions
of commutative rings it is less clear that they carry over to the
setting of Tambara functors because some of the classical tools are
not (yet) available, such as local-to-global arguments. 

We study examples of such Galois extensions. In the
non-equivariant setting the above mentioned extension of commutative rings
$\Z[\frac{1}{2}] \ra \Z[\frac{1}{2},i]$ is a fundamental example. If
one wants to mimic its construction in the world of $C_2$-Tambara
functors by taking a suitable quotient of a free Tambara functors on a
generator at the trivial level, then this does not work, see Section
\ref{sec:nogauss}.   
  
Schuchardt, Spitz, and Wisdom \cite{ssw} investigate and classify Nullstellensatzian 
objects in the sense of Burklund, Schlank, and Yuan \cite{bsy} in the category of $G$-Tambara functors for any finite group $G$. They identify them as co-inductions $\coind_e^G(\mathbb{F})$ of
algebraically closed fields $\mathbb{F}$. We interpret these objects in the context of Galois theory of Tambara functors in Section \ref{sec:nullstellensatzian}.

\subsection*{Acknowledgements}

The author thanks Mike Hill for very helpful comments. I thank Ayelet Lindenstrauss and Foling Zou for a very fruitful ongoing collaboration which led to my curiosity about separability and Galois extensions in the equivariant context. 

She would like to thank the Isaac Newton Institute for Mathematical
Sciences, Cambridge, for support and hospitality during the programme
Equivariant homotopy theory in context, where work on this paper was
undertaken. This work was supported by EPSRC grant EP/Z000580/1. She thanks Churchill College Cambridge for its hospitality.

\section{Fixed point Tambara functors and box products} \label{sec:fixbox}

We start with a small but very helpful technical result that will allow us to control box products of fixed point Tambara functors. 

Lewis noted several facts about fixed point and orbit Mackey functors
\cite[\S 1]{lewis}. We will use one of them to prove a generalization
of a formula for fixed points Tambara functors from \cite[Lemma
3.2]{lr}: There we showed that there is an isomorphism of $C_2$-Mackey
functors $\und{M}^\fix \Box \und{N}^\fix \cong
\und{(M \otimes N)}^\fix$, if $M$ and $N$ are abelian groups with a
$C_2$-action and if $2$ is invertible in one of them. We also proved
an upgrade to an isomorphism of $C_2$-Tambara functors if both $M$ and
$N$ are in fact commutative $C_2$-rings. It was also clear that the arugments could be extended to arbitrary cyclic groups of prime order $C_p$.
In the following
we extend the result to arbitrary finite groups as long as $|G|$ is
invertible in one of the commutative rings. 

In the following, $G$ is an arbitrary but fixed finite group. Let $M$
be an abelian group with a $G$-action. We denote by $\und{L(M)}$ the
orbit Mackey functor. This has $\und{L(M)}(G/H) = M_H$, the
coinvariants of $M$ with respect to $H$. For $K < H$ the transfer map
$\tr_K^H \colon M_K \ra M_H$ is the canonical projection map and the
restriction $\res_K^H \colon M_H \ra M_K$ is given by sending an
equivalence class $[m] \in M_H$ to the class of $\sum_{\gamma \in H/K}
\gamma m \in M_K$. Here, the sum is taken over representatives for $K$
in $H$. 

Note that $\und{L}$ is a functor from the category of abelian groups with
$G$-action to the category of $G$-Mackey functors and is left adjoint to the evaluation functor that takes a $G$-Mackey functor and evaluates it at $G/e$.

\begin{lem}
  Let $M$ and $N$ be abelian groups with a $G$-action. if $|G|$ is invertible in
  $M$ or $N$, then there is an isomorphism of $G$-Mackey functors
  \begin{equation} \label{eq:fix-iso}
    \und{M}^\fix \Box \und{N}^\fix \cong \und{(M \otimes N)}^\fix.\end{equation}

\end{lem}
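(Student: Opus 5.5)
Assume $|G|$ is invertible in $M$; the case of $N$ is symmetric. The plan is to pass through the orbit Mackey functor, using one of Lewis' observations. When $|G|$ is invertible in $M$, the norm map $M_H \to M^H$, $[m] \mapsto \sum_{h\in H} hm$, is an isomorphism for every subgroup $H$. Its inverse is $m \mapsto [\frac{1}{|H|} m]$. These maps are compatible with restrictions and transfers: the transfer in $\und{M}^\fix$ is the relative norm $\sum_{H/K}$, and the restriction is the inclusion, which is exactly the structure of $\und{L(M)}$ transported along the norm maps. Hence $\und{M}^\fix \cong \und{L(M)}$ as $G$-Mackey functors. The same holds for $M\otimes N$, since $|G|$ is invertible in $M\otimes N$ as well. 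So it suffices to show
\[ \und{L(M)} \Box \und{N}^\fix \cong \und{L(M\otimes N)}. \]

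For this I would use the adjunctions. The functor $\und{L}$ is left adjoint to evaluation at $G/e$, and $\und{N}^\fix$ is the right adjoint, i.e.\ $\und{N}^\fix$ is the coinduced Mackey functor of $N$. For any $G$-Mackey functor $\und{P}$, the internal hom and box product adjunction gives
\[ \mack(\und{L(M)} \Box \und{N}^\fix, \und{P}) \cong \mack(\und{L(M)}, \und{\mathrm{Hom}}(\und{N}^\fix,\und{P})) \cong \mathrm{Hom}_G(M, \und{\mathrm{Hom}}(\und{N}^\fix,\und{P})(G/e)). \]
Next, $\und{\mathrm{Hom}}(\und{N}^\fix,\und{P})(G/e) = \mack(\res^G_e \und{N}^\fix, \res^G_e \und{P}) = \mathrm{Hom}(N, \und{P}(G/e))$ with conjugation action. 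This uses that restriction to the trivial group of any Mackey functor is just its underlying abelian group. Hence the left side is $\mathrm{Hom}_G(M\otimes N, \und{P}(G/e)) = \mack(\und{L(M\otimes N)}, \und{P})$, and Yoneda gives the claim.

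The step I expect to be the main obstacle is the identification of $\und{\mathrm{Hom}}(\und{N}^\fix,\und{P})(G/e)$, together with checking that the $G$-action matches the diagonal action on $M\otimes N$. If this becomes delicate, I would instead compute directly. I would use the formula
\[ (\und{A}\Box\und{B})(G/H) = \Big(\bigoplus_{K\leq H} \und{A}(G/K)\otimes \und{B}(G/K)\Big)/\text{(Frobenius reciprocity, Weyl actions)}. \]
With $\und{A}=\und{L(M)}$, every element of $M_K$ is a transfer from level $e$. So by Frobenius reciprocity, $\tr(a)\otimes b = \tr(a\otimes \res b)$, everything is generated by transfers from the $G/e$ term $M\otimes N$. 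The box product at $G/H$ is then a quotient of $(M\otimes N)_H$, and the evaluation map back to $(M\otimes N)_H$ via the norm isomorphisms shows that this quotient map is injective.

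Finally, I would transport back along $\und{L(M\otimes N)} \cong \und{(M\otimes N)}^\fix$ to obtain \eqref{eq:fix-iso}. For later use, note that the isomorphism is induced at level $G/H$ by $m\otimes n \mapsto m\otimes n$ on fixed points.
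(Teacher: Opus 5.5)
Your proposal is correct and has the same skeleton as the paper's proof. Both identify $\und{M}^\fix\cong\und{L(M)}$ and $\und{(M\otimes N)}^\fix\cong\und{L(M\otimes N)}$ via the norm maps $[m]\mapsto\sum_{h\in H}hm$, and both reduce to $\und{L(M)}\Box\und{N}^\fix\cong\und{L(M\otimes N)}$.

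The difference is in that middle step. The paper quotes Lewis' formula $\und{L(M)}\Box\und{T}\cong\und{L(M\otimes\und{T}(G/e))}$ and writes down maps in both directions. You instead derive it by representability, combining:
\begin{itemize}
\item the closed structure of $\Box$,
\item the adjunction $\und{L}\dashv\mathrm{ev}_{G/e}$,
\item the identification of $\und{\mathrm{Hom}}(\und{A},\und{B})(G/e)$ with $\mathrm{Hom}(\und{A}(G/e),\und{B}(G/e))$ under conjugation.
\end{itemize}
You use nothing about $\und{N}^\fix$ beyond its value at $G/e$, so you in fact reprove Lewis' statement for arbitrary $\und{T}$.

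Your route is self-contained and spares you checking that explicit componentwise maps are well defined and mutually inverse. That is a real advantage here. The paper's componentwise description of the map out of the box product factors through an identification $(M\otimes\und{T}(G/e))_H\cong M_H\otimes\und{T}(G/e)_H$, which fails in general (e.g.\ for regular representations). The correct component is $[m]\otimes t\mapsto[m\otimes\res_e^H t]$.

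What the explicit maps buy is the concrete formula on fixed points used in Corollary~\ref{cor:boxfix}. You state that formula at the end without justification. It does follow in your setting: by Frobenius reciprocity every element of $\und{L(M)}\Box\und{T}$ is a transfer from $G/e$, as you note in your fallback. So your Yoneda isomorphism is determined by the identity at the free level, and tracing it through the norm isomorphisms gives $m\otimes n\mapsto m\otimes n$ on $M^H\otimes N^H$.
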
  
\begin{proof}
  Lewis states in \cite[Example 1.2 (b)]{lewis} that for all
  $G$-Mackey functors $\und{T}$ and all abelian groups with $G$-action
  $M$ 
  \[ \und{L(M)} \Box \und{T} \cong \und{L(M \otimes \und{T}(G/e))} \]
  where one considers the diagonal $G$-action on $M \otimes \und{T}(G/e)$. 

  The map $\und{L(M \otimes \und{T}(G/e))} \ra \und{L(M)} \Box
  \und{T}$ is determined by the adjoint map 
  \[ M \otimes \und{T}(G/e) \ra \und{L(M)} \Box \und{T}(G/e) = M \otimes \und{T}(G/e)\]
  which we take to be the identity map. For the converse map we use the description of maps out of a box product as componentwise maps
  \[\xymatrix{ \und{L(M)}(G/H) \otimes \und{T}(G/H) \ar@{=}[d] \ar[rr] 
& &      \und{L(M \otimes \und{T}(G/e))}(G/H)  \ar@{=}[d] &  \\
M_H \otimes \und{T}(G/H)  \ar@/_5ex/[rrr]_{\psi_H} &   &  (M \otimes \und{T}(G/e))_H \ar[r]^{\cong} &  M_H \otimes \und{T}(G/e)_H }\]
  that are compatible with the structure maps, see \cite[Remark 1.2.3 and p.~41]{mazur} for an explicit 
  list of conditions. We set $\psi_H$ to be $\id_{M_H} \otimes (\pi \circ \res_e^H)$ where $\pi \colon \und{T}(G/e) \ra \und{T}(G/e)_H$ is the projection map. These maps are inverse to each other. 

  If $|G|$ is invertible in $M$, then $\und{L(M)}$ is isomorphis to $\und{M}^\fix$ as $G$-Mackey functors: At level $G/H$ 
  \begin{equation} \label{eq:norm} M_H \ra M^H\end{equation} 
  is the map $[m] \mapsto \sum_{h \in H} hm$ and conversely, the map $M^H \ra M_H$ is given by sending an $H$-fixed point $m$ to $\frac{1}{|H|}[m]$.

  Taking these two isomorphisms together yields the desired isomorphism of
  $G$-Mackey functors: 
  \begin{align} \label{eq:isofix}
\und{M}^\fix \Box \und{N}^\fix \cong & \und{L(M)} \Box \und{N}^\fix \\
    \cong & \und{L(M \otimes N)} \\
    \cong & \und{(M\otimes N)}^\fix.             
  \end{align}
\end{proof}  

\begin{cor} \label{cor:boxfix} 
If $\und{T}^\fix$ and $\und{R}^\fix$ are $G$-Tambara functors and if $|G|$ is invertible in $R$ or $T$, then the isomorphism \eqref{eq:fix-iso} is an isomorphism of $G$-Tambara functors. 
\end{cor}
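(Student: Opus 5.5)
The plan is to use that the box product $\und{R}^\fix \Box \und{T}^\fix$ is the coproduct in $G$-Tambara functors. A Tambara map out of it is therefore the same as a pair of Tambara maps out of the two factors. So the natural approach is to exhibit a map of Tambara functors
\[ \Phi \colon \und{R}^\fix \Box \und{T}^\fix \ra \und{(R \otimes T)}^\fix \]
induced by the pair $\und{R}^\fix \ra \und{(R\otimes T)}^\fix \leftarrow \und{T}^\fix$. These come from applying the fixed point functor to the ring maps $r \mapsto r \otimes 1$ and $t \mapsto 1 \otimes t$. The fixed point construction is a functor from commutative $G$-rings to $G$-Tambara functors, since norms are multiplicative products over cosets and transfers are additive sums. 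Hence both legs are Tambara maps and $\Phi$ is a Tambara map.

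Next I would check that the underlying Mackey map of $\Phi$ is an isomorphism. It suffices to show that $\Phi$ agrees with the Mackey isomorphism \eqref{eq:fix-iso} from the preceding lemma. Alternatively, show directly that $\Phi$ is bijective levelwise. Since a map of Tambara functors that is an isomorphism of underlying Mackey functors is an isomorphism of Tambara functors (the inverse automatically commutes with norms), this finishes the proof. Multiplicativity of the inverse is automatic too.

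To compare with \eqref{eq:fix-iso}, I would use the explicit description of the box product at level $G/H$. By the Mackey structure, $\Phi$ is determined by its components $R^H \otimes T^H \ra (R\otimes T)^H$, $r \otimes t \mapsto r \otimes t$, together with the transfer relations. Say $|G|$ is invertible in $R$. Trace the lemma's isomorphism on an element $r \otimes t$ with $r \in R^H$, $t\in T^H$:
\begin{itemize}
\item $r$ goes to $\frac{1}{|H|}[r] \in R_H$;
\item then under $\psi_H$ to $\frac{1}{|H|}[r \otimes \res^H_e t] = \frac{1}{|H|}[r\otimes t]$;
\item then under the norm map \eqref{eq:norm} to $\frac{1}{|H|}\sum_{h\in H} h(r\otimes t) = r\otimes t$, as $r,t$ are $H$-fixed.
\end{itemize}
So on the generating components both maps coincide. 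Since a Mackey map out of a box product is determined by these components, $\Phi$ equals the Mackey isomorphism \eqref{eq:fix-iso}.

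I expect the main obstacle to be this identification step. It requires keeping careful track of the isomorphisms $\und{L(M)} \cong \und{M}^\fix$ and Lewis' isomorphism at every level $G/H$, not just at $G/e$. A second subtlety is whether the fixed point functor really lands in Tambara maps, i.e., whether norms in $\und{(R\otimes T)}^\fix$ are computed as the multiplicative coset products. If the direct comparison gets messy, the fallback is a bijectivity check at each level. This uses that every element of $\und{R}^\fix\Box\und{T}^\fix(G/H)$ is a sum of transfers of elementary tensors, and that $\Phi$ commutes with transfers.
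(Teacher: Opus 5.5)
Your proposal is correct, and its computational core is the same as the paper's. The paper also traces an elementary tensor at level $G/K$ through \eqref{eq:norm} and Lewis' isomorphism, and it finds that \eqref{eq:fix-iso} is just the inclusion $T^K\otimes R^K\ra (T\otimes R)^K$.

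The difference is in how you get compatibility with the Tambara structure. The paper stops after the computation and asserts that this levelwise map is a ring map compatible with norms. You instead build a Tambara map $\Phi$ in advance, using that $\Box$ is the coproduct of $G$-Tambara functors, with $\Phi$ induced by $r\mapsto r\otimes 1$ and $t\mapsto 1\otimes t$. You then identify $\Phi$ with \eqref{eq:fix-iso} by the same computation, together with the fact that a Mackey map out of a box product is determined by its Dress-pairing components.

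Your route costs an appeal to the standard but nontrivial coproduct property of the box product of Tambara functors. What it buys is a complete argument for norms. On elementary tensors $r\otimes t$, compatibility with norms is evident on both sides. But general elements of the box product are sums of transfers of such tensors, and norms of sums and of transfers are governed by Tambara's distributivity formulas. The paper leaves that extension implicit; your approach never has to touch it.

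Your middle step is also the more accurate bookkeeping. You land directly in $(R\otimes T)_H$ via $\frac{1}{|H|}[r\otimes \res_e^H t]$, whereas the paper passes through a tensor product of coinvariants. For the diagonal action, $(R\otimes T)_H$ and $R_H\otimes T_H$ differ in general, so your version avoids that identification.
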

\begin{proof}
Without loss of generality we  assume that $|G|$ is invertible in $T$. 
  The isomorphism \eqref{eq:isofix} uses the isomorphism $\und{L(T)} \Box \und{R}^ \fix \cong \underline{L(T \otimes R)}$ together with \eqref{eq:norm}, so at level $G/K$ it sends $t \otimes r$ with $t \in T^K$ and $r \in R^K$ to
  \begin{align*} t \otimes r & \mapsto \frac{1}{|K|}[t] \otimes r \in T_K \otimes R^K \\
                             &    \mapsto  \frac{1}{|K|}[t] \otimes [\res_e^K(r)] \in T_K \otimes R_K \\
    &  \mapsto \sum_{k \in K} \frac{1}{|K|} kt \otimes k\res_e^K(r) \in (T \otimes R)^K, 
    \end{align*}
but as $t$ and $r$ are $K$-fixed points and as the restriction map in fixed point Tambara functors is the inclusion, this agrees with $t \otimes r$, so in total the map includes $T^K \otimes R^K$ into $(T\otimes R)^K$ and this is a ring map and is compatible with the norm. 
  \end{proof}

If the order of the group $G$ is neither invertible in $M$ nor in $N$, then the box product of two fixed point $G$-Tambara functors does not necessarily simplify to the fixed point functor of the tensor product: 
\begin{ex}
  Let $M = \Z[i] = N$. We claim that
  \[ \und{\Z[i]}^\fix \Box \und{\Z[i]}^\fix \not\cong \und{(\Z[i] \otimes \Z[i])}^\fix. \]

The two Tambara functors agree at the free level:
\[ \und{\Z[i]}^\fix \Box \und{\Z[i]}^\fix(C_2/e) = \Z[i] \otimes \Z[i]
= \und{(\Z[i] \otimes \Z[i])}^\fix(C_2/e). \]

However, at the trivial level we have
\[ \und{(\Z[i] \otimes \Z[i])}^\fix(C_2/C_2) = (\Z[i] \otimes
  \Z[i])^{C_2}\]
and the fixed points in the tensor product are the abelian group
spanned by $1 \otimes 1$ and $i \otimes i$.

For $\und{\Z[i]}^\fix \Box \und{\Z[i]}^\fix(C_2/C_2)$ we obtain
\[ \Big(\Z \otimes \Z \oplus (\Z[i] \otimes
  \Z[i])/W\Big)/\mathrm{FR}\]
where $W$ denotes the Weyl group action that sends $a \otimes b$ to
$\bar{a} \otimes \bar{b}$. We denote the Weyl equivalence class of $a
\otimes b$ by $[a \otimes b]$ and as $\overline{i}= -i$ the terms $[1
\otimes i]$ and $[i \otimes 1]$ vanish.

Frobenius reciprocity (FR) identifies
$[\res(x) \otimes b]$ with $x \otimes \tr(b)$ and $[a \otimes
\res(y)]$ with $\tr(a) \otimes y$ and therefore
$[1 \otimes 1] = 2 \cdot 1 \otimes 1$, so we only need to keep the
generators $1 \otimes 1$ and $[i \otimes i]$.

Assume that there is an isomorphism of Tambara functors $\phi \colon 
\und{\Z[i]}^\fix \Box \und{\Z[i]}^\fix \ra \und{(\Z[i] \otimes
  \Z[i])}^\fix$. Then at both levels we have an isomorphism of rings
and thus $1 \otimes 1$ has to map to $1 \otimes 1$ in both levels.

As $i \otimes i$ is a square root of $1$, $\phi_e$ has to send it
again to such a square root. As we assume that $\phi_e$ is injective,
it cannot map it to $-1 \otimes 1$, so it has to sent it to $\pm i
\otimes i$. 
This results in
\begin{align*}
  \phi_{C_2}([i \otimes i]) & = \phi_{C_2}(\tr(i \otimes i)) \\
                          & =  \tr(\phi_e(i \otimes i)) \\
                          & = \pm \tr(i \otimes i) \\
                          & = \pm (i \otimes i + (-i) \otimes (-i)) =
                            \pm 2 \cdot i \otimes i.
                            \end{align*}
As $2$ is not invertible, this says that the map $\phi_{C_2}$ is not
surjective. 
  \end{ex}

Later, in the context of Galois extensions,  we will be able to generalize Corollary \ref{cor:boxfix} to contexts where the group order is not necessarily invertible; see Remark \ref{rem:char2} (1).

\section{Hochschild homology and genuine K\"ahler differentials}  \label{sec:hh}

Mike Hill defined a notion of genuine K\"ahler differentials
\cite[Definition 5.4]{hill}. In \cite[Theorem 4.19]{mqs} the authors
relate them to the first equivariant Hochschild homology group for
Green functors. We first recall the definition of equivariant Hochschild homology:   
\begin{defn} \label{defn:hh} 
  Let $\und{R}$ be a commutative Green functor and let  $\und{R} \ra \und{T}$
  be a morphism of Green functors and let $\und{M}$ be a $\und{T}$-bimodule over $\und{R}$. The \emph{Hochschild homology of $\und{T}$ over $\und{R}$ with coefficients in $\und{M}$, $\GHH_*^{\und{R}}(\und{T}; \und{M})$}, is the homology of
  the chain complex associated with the simplicial Mackey functor whose part in  degree $q$ is 
  \[ C_q^\und{R}(\und{T}; \und{M}) = \und{M} \Box_\und{R} \und{T}^{\Box_\und{R} q}\]
  and whose differential is $b^{(q)} \colon C_q^\und{R}(\und{T}; \und{M}) \ra C_{q-1}^\und{R}(\und{T}; \und{M})$ with $b^{(q)} = \sum_{i=0}^q (-1)^ib_i$ where
  \[ b_i = \id^{\Box_\und{R} i-1} \Box_\und{R} \mu \Box_\und{R} \id^{\Box_\und{R} q-i-1} \text{ for } 0 \leq i < q\]
  Here $\mu$ denotes the left and right $\und{T}$-module structure of $\und{M}$ and the multiplication in $\und{T}$. The last face map $b_q$ mimics the last Hochschild face map, so
  \[ b_q = (\mu \Box_\und{R} \id^{\Box_\und{R} q-1}) \circ \gamma,\]
 where  $\gamma \colon \und{M} \Box_\und{R} \und{T}^{\Box_\und{R} q} \ra \und{T} \Box_\und{R} \und{M}
  \Box_\und{R} \und{T}^{\Box_\und{R} q-1}$ cyclically permutes the factors by bringing the last
  factor to the front. 
\end{defn}

The face maps give rise to a semi-simplicial structure. Together with
the degeneracies this gives a simplicial object in Mackey functors,
where the $i$th degeneracy map just inserts the unit $\eta \colon
\und{R} \ra \und{T}$ between the spot $i$ and $i+1$.

\begin{rem}
For a map of
$G$-Tambara functors 
$\und{S} \ra \und{R}$ this actually agrees with the equivariant Loday
construction with respect to $S^1$, $\cL_{S^1}^{G,
  \und{S}}(\und{R})$, as in  \cite[Definition 2.2]{lrz-loday} 
where $S^1 = \Delta_1/\partial\Delta_1$ is the small model of the simplicial
circle viewed as a finite $G$-simplicial set with trivial $G$-action. The
triviality of the $G$-action on $S^1$ and the fact that there is a
cyclic ordering on $S^1_n$ for all $n$ ensure that we 
can actually get away with fewer assumptions: We don't need
multiplicative norms and we do not need $\und{R}$ to be commutative.  

\end{rem}

In the non-equivariant commutative case the first Hochschild homology group also coincides with the module of K\"ahler differentials: For a commutative $k$-algebra $A$ and a symmetric $A$-bimodule $M$ over $k$: $\HH_1^k(A;M) \cong M \otimes_A \Omega^1_{A \mid k}$.

Recall from \cite[Definition 4.1]{hill} that a genuine $\und{R}$-derivation $d \colon \und{T} \ra \und{M}$ is a morphism of $G$-Mackey functors such that

\begin{enumerate}
\item
  $d$ satisfies a Leibniz rule: for all finite $G$-sets $S$ and all $a,b  \in \und{T}(S)$
  \[ d(ab) = ad(b) + d(a)b. \]
\item
  For all $a \in \und{T}(G/H)$ and for all $H < K < G$:
  \[ d(\norm_H^K(a)) = \tr_H^K(N_{\pi_2}R_{\pi_1}(a)\cdot d(a)). \]
  Here $\pi_i \colon G/H \times_{G/K} G/H \setminus \Delta \ra G/H$ is the projection to the $i$th factor and $\Delta$ is the diagonal. 
\item
  $d \circ \eta = 0$.   
\end{enumerate}
Conditions (1) and (3) agree with the non-equivariant conditions for a
derivation. Condition (2) can be thought of as a \emph{twisted Leibniz rule}. The genuine K\"ahler differentials are defined as $\und{I}/\und{I}^{>1}$ where $\und{I}$ denotes the kernel of the multiplication map $ \und{R} \Box_\und{S} \und{R} \ra \und{R}$ and $\und{I}^{>1}$ is the ideal generated by the image of norms on $2$-surjective maps. This contains the image of the multiplication map on $\und{I}$.  The module of genuine K\"ahler differentials then represents genuine derivations \cite[]{hill}

In the equivariant case, the relationship between the first Hochschild homology group and the module of genuine K\"ahler differentials is similar to the non-equivariant setting. A proof of the following result can also be found in \cite[Theorem 4.19]{mqs}. 

\begin{thm} \label{thm:hh1}
  If $\eta \colon \und{R} \ra \und{T}$ is a morphism of $G$-Tambara functors, then
  \[ \GHH_1^\und{R}(\und{T}) \cong \und{I}/\und{I}^2. \]

\end{thm}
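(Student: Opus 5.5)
We follow the classical argument for $\HH_1^k(A) \cong I/I^2$, carried out levelwise in Mackey functors. All constructions are compatible with the structure maps, and everything is expressed through cokernels and kernels of Mackey functor maps, which are computed levelwise. The only equivariant inputs are that $\Box_\und{R}$ is right exact in each variable, and that $\und{T}$ is commutative, so that $\und{T}\Box_\und{R}\und{T}$ is a commutative Green functor.

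First we identify the low-degree part of the Hochschild complex. We have $C_0 = \und{T}$ and $C_1 = \und{T}\Box_\und{R}\und{T}$ with $b^{(1)} = \mu - \mu\circ\gamma = 0$, by commutativity of $\und{T}$. Hence $\GHH_1^\und{R}(\und{T}) = C_1/\mathrm{im}(b^{(2)})$, where
\[ b^{(2)}(x\otimes y\otimes z) = xy\otimes z - x\otimes yz + zx\otimes y \]
on the box product; by the universal property of box products it suffices to describe maps on elements of the form $x\otimes y\otimes z$ at each level. Next we use the splitting $C_1 = \und{T}\Box_\und{R}\und{T} \cong \und{T}\oplus\und{I}$, coming from the unit $\und{T} \to \und{T}\Box_\und{R}\und{T}$, $t\mapsto t\otimes 1$, which is a section of $\mu$. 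Explicitly, $x\otimes y \mapsto (xy,\; x\otimes y - xy\otimes 1)$. The image of $b^{(2)}$ contains $b^{(2)}(x\otimes 1\otimes 1) = x\otimes 1$, so $\GHH_1 \cong (\und{T}\Box_\und{R}\und{T})/(\und{T}\otimes 1 + \mathrm{im}\, b^{(2)})$, and this quotient is a quotient of $\und{I}$.

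The plan is to define mutually inverse maps. In one direction, $\Phi\colon \und{I}/\und{I}^2 \to \GHH_1$ is induced by the inclusion $\und{I}\to C_1$. For it to be well defined we need $\und{I}^2 \subseteq \mathrm{im}\, b^{(2)} + \und{T}\otimes 1$. Here $\und{I}^2$ denotes the image of the multiplication $\und{I}\Box\und{I}\to\und{I}$, i.e.\ the Mackey ideal generated levelwise by products; at each level it consists of sums of transfers of products. Since $\mathrm{im}\, b^{(2)}$ is a sub-Mackey functor, it suffices to check products at a single level. Classically $\und{I}$ is generated as a $\und{T}$-module by elements $x\otimes 1 - 1\otimes x$, but equivariantly $\und{I}(G/H)$ also contains transfers. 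The key identity is therefore needed in the general form
\[ \Big(\sum_i a_i\otimes b_i\Big)\Big(\sum_j c_j\otimes d_j\Big) \equiv 0 \pmod{\mathrm{im}\, b^{(2)}} \quad \text{whenever } \sum a_ib_i = 0 = \sum c_jd_j, \]
working at a fixed level where these are honest tensors. Alternatively one reduces to level-generators using Frobenius reciprocity: $\tr(u)\cdot v = \tr(u\cdot\res v)$, so products involving transfers are transfers of products at a lower level. The classical identity to use is
\[ (a\otimes b)(c\otimes d) = ac\otimes bd \equiv a\otimes bcd\cdot(\ldots) \]
obtained by applying $b^{(2)}$ to $a\otimes b\otimes cd$-type elements. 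Concretely, $b^{(2)}(ac\otimes b\otimes d) = acb\otimes d - ac\otimes bd + dac\otimes b$, and summing over the indices with $\sum a_ib_i = 0$ and $\sum c_jd_j = 0$ kills all terms except $-\sum ac\otimes bd$.

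In the other direction, $\Psi\colon \GHH_1 \to \und{I}/\und{I}^2$ is given by $x\otimes y \mapsto x\otimes y - xy\otimes 1$. This is a Mackey map, being the difference of two Mackey maps. We check that it kills $\mathrm{im}\, b^{(2)}$ and $\und{T}\otimes 1$. The element $\Psi b^{(2)}(x\otimes y\otimes z)$ equals $xy\otimes z - x\otimes yz + xz\otimes y - xyz\otimes 1$, which is $(x\otimes 1)(1\otimes y - y\otimes 1)(1\otimes z - z\otimes 1) \in \und{I}^2$. It is then immediate that $\Phi$ and $\Psi$ are inverse.

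The main obstacle is the well-definedness of $\Phi$, i.e.\ showing that $\und{I}^2$ dies in $\GHH_1$. Since $\und{I}(G/H)$ is not generated by honest elementary tensors, because the box product at level $G/H$ involves transfers from subgroups, one must argue via Frobenius reciprocity. A product of a transferred element with anything is the transfer of a product at a lower level, so by induction on subgroups it reduces to products of elements given by tensors at a single level, where the summation identity above applies. Throughout, one should note that neither norms nor the ideal $\und{I}^{>1}$ enter: only the Green structure is used.
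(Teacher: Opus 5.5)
Your strategy is the paper's. Your $\Psi$ is, up to sign, the paper's map $\phi(x\otimes y)=x\cdot\varphi(y)$ with $\varphi(y)=y\otimes 1-1\otimes y$. Several of your steps are fine:
\begin{itemize}
\item $b^{(1)}=0$;
\item $\und{T}\otimes 1\subseteq \mathrm{im}\,b^{(2)}$;
\item $\Psi$ is well defined, since it is a Mackey map out of a box product, so elementary tensors suffice;
\item $\Phi\Psi=\id$ and $\Psi\Phi=\id$.
\end{itemize}
The displayed factorization of $\Psi b^{(2)}(x\otimes y\otimes z)$ is off by a sign, which is harmless. You also go further than the paper, which only verifies that $\phi$ kills $\mathrm{im}\,b^{(2)}$: you make the inverse explicit and isolate $\und{I}^2\subseteq \mathrm{im}\,b^{(2)}$ as the real issue. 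But the argument you sketch for that inclusion does not work as stated.

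An element $i\in \und{I}(G/K)$ is a priori only a sum $\sum_\alpha T_{f_\alpha}(a_\alpha\otimes b_\alpha)$ whose \emph{total} lies in $\ker\mu$. The individual summands need not lie in $\und{I}$, and this includes the part that is an honest tensor at level $G/K$. So when you expand $i\cdot j$ by Frobenius reciprocity, you get products at lower levels whose factors are not in $\und{I}$. Your summation identity needs $\sum a_ib_i=0$ and $\sum c_jd_j=0$ for each factor separately, so it never becomes applicable, and the induction on subgroups does not start.

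The missing observation is that $\und{I}$ is generated as a Mackey $\und{T}$-module by the elements $\delta(y)=1\otimes y-y\otimes 1$ at all levels. Your projection $p(x\otimes y)=x\otimes y-xy\otimes 1=(x\otimes 1)\delta(y)$ is a Mackey map restricting to the identity on $\und{I}$. Hence $i=p(i)=\sum_\alpha T_{f_\alpha}\big((a_\alpha\otimes 1)\delta(b_\alpha)\big)$, with every summand in $\und{I}$ at its own level.

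It is cleanest to avoid elements of the kernel altogether. Since $\Box$ preserves epimorphisms, $\und{I}^2$ is the image of the Mackey map $(\und{T}\Box_\und{R}\und{T})\Box(\und{T}\Box_\und{R}\und{T})\to \und{T}\Box_\und{R}\und{T}$ given by $(x\otimes y)\otimes(z\otimes w)\mapsto (xz\otimes 1)\delta(y)\delta(w)$. So it suffices to check elementary tensors, and by left $\und{T}$-linearity of $b^{(2)}$ there
\[ (xz\otimes 1)\,\delta(y)\delta(w) = b^{(2)}(xzyw\otimes 1\otimes 1) - b^{(2)}(xz\otimes y\otimes w). \]
This is precisely the paper's identity relating $\mu\circ(\varphi\Box\varphi)$ to $b^{(2)}\circ(\eta\Box\id\Box\id)$ up to a degenerate summand, used as an identity of Mackey maps. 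With it your proof is complete.
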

The following proof is a direct transfer of the non-equivariant proof. 
\begin{proof}
  There is a canonical map $\varphi \colon \und{T} \ra \und{I}$ that takes the difference of
  the two maps
  \[ \xymatrix@1{\und{T} \cong \und{T} \Box_\und{R} \und{R} \ar[r]^{\id \Box \eta} & \und{T} \Box_\und{R} \und{T}}  \text{ and } \xymatrix@1{\und{T} \cong \und{R} \Box_\und{R} \und{T} \ar[r]^{\eta \Box \id} & \und{T} \Box_\und{R} \und{T}}. \]  We
  prolong this map with the projection and obtain
  $\und{T} \ra \und{I}/\und{I}^2$. As $\und{I}$ is a $\und{T}$-module (and so is $\und{I}/\und{I}^2$), this yields a morphism of Mackey functors
  \[ \phi \colon \und{T} \Box_\und{R} \und{T} \ra \und{T} \Box_\und{R} \und{I}/\und{I}^2 \ra \und{I}/\und{I}^2. \]
  We claim that $\phi$ factors through $\GHH_1^\und{R}(\und{T})$ and induces the
  desired isomorphism.

  We have to show that $\phi$ vanishes on the image of the Hochschild boundary
  \[ b^{(2)} = \mu \Box_\und{R} \id - \id \Box_\und{R} \mu + (\mu \Box_\und{R} \id) \circ \gamma. \]
  To that end we observe that the commutativity of $\und{T}$ ensures that we can use the permutation $\tau$ instead of $\gamma$, where $\tau$ interchanges the third box product factor with the second one. The resulting map is the same. But now we can rewrite $b^{(2)}$ as a composite
  \[ \xymatrix{
      \und{T} \Box_\und{R} \und{T} \Box_\und{R} \und{T} \ar[r]^{\id \Box \psi} & \und{T} \Box_\und{R} \und{R} \Box_\und{R} \und{T} \Box_\und{R} \und{T} \ar[d]^{\id \Box \eta \Box \id \Box \id} \\
&      \und{T} \Box_\und{R} \und{T} \Box_\und{R} \und{T} \Box_\und{R} \und{T} \ar[d]^{\id \Box b^{(2)}} \\
 &     \und{T} \Box_\und{R} \und{T} \Box_\und{R} \und{T} \ar[d]^{\mu \Box \id}\\
  &    \und{T} \Box_\und{R} \und{T}
    }\]
  where $\psi$ is the canonical isomorphism $\und{T} \Box_\und{R} \und{T} \cong \und{R} \Box_\und{R} \und{T} \Box_\und{R} \und{T}$.
The composite $b^{(2)} \circ (\eta \Box \id \Box \id)$ is in the image of $\und{I}^2$ because up to a degenerate summand it corresponds to the product $\mu \circ (\varphi \Box \varphi)$. 
  
\end{proof}

Using the identification $\und{M} \Box_{\und{R}} \und{T}^{\Box_\und{R} n} \cong \und{M} \Box_\und{T} (\und{T}^{\Box_\und{R} n+1})$ and an analogous proof as above
  we immediatly obtain the following: 
\begin{cor}
If $\eta \colon \und{R} \ra \und{T}$ is a morphism of $G$-Tambara
functors and if $\und{M}$ is a $\und{T}$-module, then
  \[ \GHH_1^\und{R}(\und{T};\und{M}) \cong \und{M} \Box_{\und{T}}
    \und{I}/\und{I}^2. \] 
\end{cor}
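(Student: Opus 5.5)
The plan is to reduce the statement with coefficients to the coefficient-free case of Theorem \ref{thm:hh1}, using base change along $\und{T}$. The identification $\und{M} \Box_{\und{R}} \und{T}^{\Box_\und{R} n} \cong \und{M} \Box_\und{T} (\und{T}^{\Box_\und{R} n+1})$ holds levelwise in the simplicial degree $n$ and commutes with all face maps except the last one. It therefore identifies the Hochschild complex $C_*^\und{R}(\und{T};\und{M})$ with $\und{M} \Box_\und{T} B_*$. Here $B_*$ is the two-sided bar-type complex $\und{T}^{\Box_\und{R} *+1}$, viewed as a complex of $\und{T}$-modules via the first factor; its last face map uses the $\und{T}$-module structure of $\und{M}$, so we work with $\und{T}\Box_\und{R}\und{T}$-modules via $\mu$.

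Rather than invoking flatness (which we do not have), I would repeat the proof of Theorem \ref{thm:hh1} directly with coefficients. First define
\[
\phi_\und{M} \colon \und{M} \Box_\und{R} \und{T} \ra \und{M} \Box_\und{T} (\und{T}\Box_\und{R}\und{T}) \ra \und{M} \Box_\und{T} \und{I}/\und{I}^2 .
\]
This is the composite of the isomorphism above with $\id_\und{M} \Box_\und{T} (\text{projection onto } \und{I}/\und{I}^2)$ applied to $\varphi$. Concretely, it sends $m \otimes t$ to $m \otimes [1\otimes t - t\otimes 1]$.

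Next I would check that $\phi_\und{M}$ kills the image of $b^{(2)}$. As in the theorem, $b^{(2)}$ factors through the map inserting a unit and then applying the boundary on $\und{T}\Box_\und{R}\und{T}\Box_\und{R}\und{T}$. Its image therefore lands in $\und{M}\Box_\und{T}\und{I}^2$ up to degenerate terms, and these vanish in $\und{I}/\und{I}^2$. This uses that $\und{M}$ is symmetric, i.e.\ a module over the commutative $\und{T}$. The same argument shows $\phi_\und{M}$ kills $\mathrm{im}\,b^{(1)}$-relevant cycles correctly: every element of $C_1$ is a cycle, since $b^{(1)}(m\otimes t) = mt - tm = 0$. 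Hence $\phi_\und{M}$ induces a map $\GHH_1^\und{R}(\und{T};\und{M}) \ra \und{M}\Box_\und{T}\und{I}/\und{I}^2$.

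For the inverse, I would use that $\und{I}$ is generated as a $\und{T}$-module, levelwise and closed under transfers, by the elements $\varphi(t)$. This follows from the splitting $\und{T}\Box_\und{R}\und{T} \cong \und{T} \oplus \und{I}$ given by the unit in the first factor. I would then define $\und{M}\Box_\und{T}\und{I}/\und{I}^2 \ra \GHH_1$ by $m\otimes[\sum a_i\varphi(t_i)] \mapsto \sum m a_i \otimes t_i$. Checking that this is well defined on $\und{I}^2$ amounts to the Hochschild relation $b^{(2)}(m\otimes a\otimes b)= ma\otimes b - m\otimes ab + bm\otimes a$, which is exactly the Leibniz relation.

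The main obstacle is this well-definedness in the Mackey setting. Elements of box products at level $G/H$ are not just sums of elementary tensors; they also involve transfers of tensors from subgroups. So one must check that the constructed maps are compatible with transfers and restrictions, using Frobenius reciprocity, and that the generation statement for $\und{I}$ holds up to transfers. I would handle this by defining every map on the Dress-pairing description of maps out of box products, as in the proof of \eqref{eq:fix-iso}, and verifying the relations levelwise. The verifications are then the non-equivariant ones at each level plus naturality in transfers.
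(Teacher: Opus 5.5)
Your proposal is correct and follows essentially the paper's route: transport along the identification $\und{M}\Box_\und{R}\und{T}^{\Box_\und{R} n}\cong\und{M}\Box_\und{T}\und{T}^{\Box_\und{R} n+1}$ and rerun the proof of Theorem~\ref{thm:hh1} with coefficients. Your explicit inverse and your Dress-pairing verifications fill in details that the paper leaves implicit.

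Two small corrections, neither of which affects your argument. First, since $\und{T}$ is commutative and $\und{M}$ is a symmetric $\und{T}$-module, the identification is also compatible with the last face map, so it is an isomorphism of complexes $C_*^\und{R}(\und{T};\und{M})\cong\und{M}\Box_\und{T}C_*^\und{R}(\und{T})$. Second, because $b^{(1)}=0$, no flatness is needed for the quick deduction: right exactness of $\und{M}\Box_\und{T}(-)$ applied to $\GHH_1^\und{R}(\und{T})=\mathrm{coker}(b^{(2)})$ gives $\GHH_1^\und{R}(\und{T};\und{M})\cong\und{M}\Box_\und{T}\GHH_1^\und{R}(\und{T})$ directly from the theorem.
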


In the special case of fixed point Tambara functors, the classical
identification of the first Hochschild homology groups in terms of the
module of K\"ahler differentials still holds: 
\begin{prop}
  Let $R$ be a commutative ring with trivial $G$-action and let $T$ be
  a commutative ring with $G$-action together with a map of
  commutative rings $S \ra R$. Then for every $\und{T}^\fix$-module
  $\und{M}$: 
  \[ \GHH_1^{\und{R}^c}(\und{T}^\fix; \und{M})  \cong \und{M}
    \Box_{\und{T}^\fix} \Omega^{1,G}_{\und{T}^\fix/\und{R}^c}. \] 
Here, $\Omega^{1,G}_{\und{T}^\fix/\und{R}^c}$ denotes the $\und{T}^\fix$-module of
genuine K\"ahler differentials \cite[Definition 5.4]{hill}. 
\end{prop}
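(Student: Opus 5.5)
By the Corollary to Theorem~\ref{thm:hh1}, applied to $\eta\colon \und{R}^c \ra \und{T}^\fix$, we already have
\[ \GHH_1^{\und{R}^c}(\und{T}^\fix;\und{M}) \cong \und{M}\Box_{\und{T}^\fix} \und{I}/\und{I}^2 , \]
where $\und{I}$ is the kernel of the multiplication $\und{T}^\fix\Box_{\und{R}^c}\und{T}^\fix \ra \und{T}^\fix$. Hill's genuine K\"ahler differentials are $\Omega^{1,G}_{\und{T}^\fix/\und{R}^c} = \und{I}/\und{I}^{>1}$. Since $\und{I}^2 \subseteq \und{I}^{>1}$, there is always a surjection $\und{I}/\und{I}^2 \ra \und{I}/\und{I}^{>1}$. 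So the plan is to show that this surjection is an isomorphism in our special situation, i.e.\ that $\und{I}^{>1} = \und{I}^2$. Afterwards we apply $\und{M}\Box_{\und{T}^\fix}(-)$.

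To compare the two ideals, I first describe the box product. Since $\und{R}^c$ is the constant Tambara functor on a ring with trivial action, I expect $\und{T}^\fix\Box_{\und{R}^c}\und{T}^\fix$ to be generated, as a Tambara functor, by its underlying level $T\otimes_R T$. Concretely:
\begin{itemize}
\item every element at level $G/H$ is a sum of products of restrictions of elements $a\otimes b$ with $a,b\in T^H$, together with transfers from lower levels;
\item equivalently, the canonical map from the box product to $\und{(T\otimes_R T)}^\fix$ is at least controlled at each level.
\end{itemize}
The extra generators of $\und{I}^{>1}$ beyond $\und{I}^2$ are norms $N_f(x)$ along $2$-surjective maps $f$, applied to elements $x$ of $\und{I}$. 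Using the exponential formula, such norms decompose into products of at least two restrictions and conjugates of elements of $\und{I}$, plus transfers of such products. The key point is that restrictions in $\und{T}^\fix$ are injective inclusions and the Weyl actions are the given $G$-action. So a $2$-surjective norm of $x$, restricted to the underlying level, is a product of at least two elements $g\cdot\res(x)$, each lying in the underlying ideal $I_e$.

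The main obstacle is lifting this from the underlying level back to level $G/H$. In $\und{I}$ itself, restrictions need not be injective, since the box product is not a fixed point functor in general; compare the $\Z[i]$ example in Section~\ref{sec:fixbox}. Here is what I would try first:
\begin{itemize}
\item use Hill's twisted Leibniz rule (2) directly;
\item define a genuine derivation $d\colon \und{T}^\fix \ra \und{I}/\und{I}^2$ via $\varphi$;
\item verify that $d(\norm_H^K a) = \tr_H^K(N_{\pi_2}R_{\pi_1}(a)\, d(a))$ holds modulo $\und{I}^2$.
\end{itemize}
This verification should follow by writing $\norm_H^K(a)\otimes 1 - 1\otimes \norm_H^K(a)$ as a telescoping product over the cosets of $K/H$. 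Since the $R$-side carries the trivial action, only $T$ contributes nontrivial norms. Then $\und{I}/\und{I}^2$ receives a genuine derivation, so by the universal property of $\Omega^{1,G}$ the quotient map $\und{I}/\und{I}^2 \ra \und{I}/\und{I}^{>1}$ acquires an inverse. Boxing with $\und{M}$ over $\und{T}^\fix$ then gives the claim.
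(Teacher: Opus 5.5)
\textbf{There is a genuine gap, and it sits in the decisive step.} Your plan is the same as the paper's. The paper also reduces, via the Corollary to Theorem~\ref{thm:hh1}, to comparing $\und{I}/\und{I}^2$ with $\und{I}/\und{I}^{>1}$. Its whole argument is that, because $\norm_H^K(a)=\prod_{g\in K/H}ga$ in $\und{T}^\fix$, the twisted Leibniz rule ``follows from the ordinary one''.

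The gap is the step you flag and then try to settle by telescoping. The factors $ga$ do not live at level $G/K$, neither in $\und{T}^\fix$ nor in the box product. So a telescoping identity is only available after applying $\res^K_e$, and $\res$ is not injective on $\und{I}/\und{I}^2$. Compute honestly at level $G/K$, say for $G=C_2$. Apply $\norm(u+v)=\norm(u)+\norm(v)+\tr(u\cdot\tau v)$ to $a\otimes 1=1\otimes a+da$, where $da=a\otimes 1-1\otimes a$. Writing the module structure through the left factor, you get in $\und{I}(C_2/C_2)$
\[ d(\norm a)-\tr(\tau a\cdot da)=\norm(da)-\tr\big(d(\tau a)\cdot da\big). \]
The last term lies in $\und{I}^2$. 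But $\norm(da)$ is a norm along the $2$-surjective map $C_2/e\to C_2/C_2$. It lies in $\und{I}^{>1}$ by definition, and nothing puts it into $\und{I}^2$.

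In general it is not there. Take $R=\mathbb{F}_2$ and $T=\mathbb{F}_2[x,y]$ with $\tau x=y$, and set $a=x$. Write $T\otimes T=\mathbb{F}_2[x_1,y_1,x_2,y_2]$, so $dx=x_1+x_2$ and $dy=y_1+y_2$; the ideal $\und{I}(C_2/e)$ has no degree-$0$ part.

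Restriction carries $\und{I}^2(C_2/C_2)$ into $\big(\und{I}(C_2/e)^{C_2}\big)^2+(1+\tau)\big(\und{I}(C_2/e)^2\big)$. The degree-$2$ part of this is spanned by $dx^2+dy^2$, because the degree-$1$ invariants of $\und{I}(C_2/e)$ are spanned by $dx+dy$. On the other hand, $\res\norm(dx)=dx\cdot dy$, which is not in that span.

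Hence $\norm(dx)\notin\und{I}^2$, and your $d$ violates the twisted Leibniz rule. The canonical map $\GHH_1^{\und{R}^c}(\und{T}^\fix)\cong\und{I}/\und{I}^2\to\und{I}/\und{I}^{>1}$ is therefore not injective in this example, so this step cannot be repaired in the stated generality. The paper's one-line proof has the same hole: ``twisted Leibniz follows from ordinary Leibniz'' is only valid for derivations into modules whose restrictions to the free level are injective.

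\textbf{What does go through.} The argument works when $|G|$ is invertible in $T$. Then every $\und{T}^\fix$-module satisfies $\tr^K_e\res^K_e=\tr^K_e(1)\cdot(-)=|K|$ by Frobenius reciprocity, so $\res^K_e$ is injective on $\und{I}/\und{I}^2$. After restricting, use the double coset formula for $\res^K_e\tr^K_H$ and $\res^H_e N_{\pi_2}R_{\pi_1}(a)=\prod_{gH\neq H}ga$. The twisted rule then becomes the ordinary Leibniz rule for $\prod_{g\in K/H}ga$ in $T$, and your universal-property argument finishes the proof.
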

Note that a ring map $R \ra T$ as above induces a map of $G$-Tambara functors
$\eta \colon \und{R}^c \ra \und{T}^\fix$.

\begin{proof}
It suffices to consider the case where $\und{M} = \und{T}^\fix$. For a standard projection $\pi_H^K \colon G/H \ra G/K$ the associated norm in a fixed point Tambara functor is given by 
  \begin{equation} \label{ex:norm-fix}
    \norm_H^K(a) = \prod_{g \text{ rep. for } K/H} ga. \end{equation}
  Hence in this case the twisted Leibniz rule follows from the ordinary Leibniz rule. 

\end{proof}


\begin{rem}
Nakaoka shows \cite[Proposition 4.21]{nakaoka} that a $G$-Tambara
functor $\und{B}$ is a sub-Tambara functor of a Tambara functor of the
form $\und{T}^\fix$ if and only if $\und{B}$ has injective restriction
maps for all maps between transitive $G$-sets and he calls this
property (MRC). In sub-Tambara functors of fixed point Tambara
functors, we again have the formula for the norm as in
\ref{ex:norm-fix}, so here again, the twisted Leibniz rule follows
from the ordinary one. 

In particular, a cohomological $G$-Tambara functor $\und{T}$ with the
property that for all $H < G$ the commutative ring $\und{T}(G/H)$ has
no $|G|$-torsion satisfies this condition \cite[Example
4.20]{nakaoka}. Nakaoka also shows \cite[Theorem 4.32]{nakaoka} that
field-like Tambara functors satisfy (MRC). This class of Tambara
functors is extensively studied in \cite{wisdom}. See also Section
\ref{sec:nullstellensatzian} below. 
\end{rem}

  \section{\'Etale and separable Tambara functors} \label{sec:etaleseparable}
Recall from \cite{hill} that for a map of $G$-Tambara functors
$\und{R} \ra \und{T}$ we call $\und{T}$ \emph{formally \'etale} over
$\und{R}$ if $\und{T}$ is flat as an $\und{R}$-module and if
$\Omega^{1,G}_{\und{T}/\und{R}} = 0$. In the following we relate the
notion of separability to formal \'etaleness. As the definition of
separability is only based on the module category, we can define it as usual:
\begin{defn} 
  Let $\und{R} \ra\und{T}$ be a map of $G$-Tambara functors. Then $\und{T}$ is
  \emph{separable over} $\und{R}$ if the multiplication map of $\und{T}$,
  $\mu \colon \und{T} \Box_{\und{R}} \und{T} \ra \und{T}$ has a section $s \colon \und{T} \ra \und{T} \Box_{\und{R}} \und{T}$ which is a map of $\und{T}$-bimodules over $\und{R}$. 
\end{defn}  

\begin{ex}
  If a commutative ring $T$ is separable over a commutative ring $R$,
  then the constant $G$-Tambara functor $\und{T}^c$ is separable over
  $\und{R}^c$. This follows because $\und{T}^c \Box_{\und{R}^c}
  \und{T}^c \cong \und{(T \otimes_R T)}^c$ (see \cite[Lemma 5.1]{lrz}). 
  \end{ex}

\begin{ex} \label{ex:fixsep}
  Let $R \ra T$ be a map of commutative rings and assume that a group $G$ acts on
  $T$ by $R$-algebra maps and that $|G|$ is invertible in $T$. If $T$ is separable over $R$, then Corollary \ref{cor:boxfix} implies that $\und{T}^\fix$ is separable over $\und{R}^c$. 

\end{ex}

In non-equivariant algebra the ring of integers $\Z$ is the inital object and is separably closed \cite[Proposition 10.3.2]{rognes}. We obtain an analogous statement in equivariant algebra.

\begin{prop}
  The Burnside Tambara functor $\und{A}$ is separably closed for all finite groups $G$, \ie, if $\und{A} \ra \und{T}$ turns $\und{T}$ into a separable commutative $\und{A}$-algebra $G$-Tambara functor, then $\und{T}$ has a non-trivial idempotent, so that $\und{T} \cong \und{T}_0 \times \und{T}_1$ as Tambara functors. 
\end{prop}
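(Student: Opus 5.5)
Since the Burnside Tambara functor $\und{A}$ is the initial $G$-Tambara functor and the unit of the box product, we have $\und{T} \Box_{\und{A}} \und{T} \cong \und{T} \Box \und{T}$. Separability then gives a $\und{T}$-bimodule section $s \colon \und{T} \ra \und{T} \Box \und{T}$ of $\mu$. The plan mirrors the non-equivariant argument for $\Z$. A separable algebra has a separability idempotent $e = s(1) \in (\und{T}\Box\und{T})(G/G)$ satisfying $\mu(e) = 1$ and $(t \Box 1)e = (1 \Box t)e$. From $e$ we will extract a non-trivial idempotent in $\und{T}(G/G)$. Idempotents at the top level $G/G$ split a Tambara functor as a product, since restriction is a ring map and $\und{T}(X)$ is a module over $\und{T}(G/G)$ compatibly with all structure maps. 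So it suffices to produce an idempotent of $\und{T}(G/G)$ different from $0$ and $1$, at least when $\und{T} \neq \und{A}$ and $\und{T}$ is nonzero.

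The key steps are as follows.

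First, check that $e$ is idempotent in the ring $(\und{T}\Box\und{T})(G/G)$. This uses $e^2 = (s(1))\cdot e = s(1\cdot \mu(e)) = e$, by the bimodule property of $s$.

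Second, identify $(\und{T}\Box\und{T})(G/G)$ via the description of box products used in the proof of the Lemma in Section~\ref{sec:fixbox}: it is generated by $\und{T}(G/G)\otimes\und{T}(G/G)$ together with transfers of classes from lower levels, modulo Frobenius reciprocity. The comparison map $\und{T}\Box\und{T} \ra \und{T}$ given by $\mu$, together with the two "coordinate" maps, must then be used to push $e$ to an element of $\und{T}(G/G)$.

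Third, and this is the non-equivariant trick, use the twist (swap) map $\tau$ on $\und{T}\Box\und{T}$. Both $e$ and $\tau(e)$ are separability idempotents. Then consider $f = \tau(e)\cdot(1\Box 1 - e)$, or rather the image of $e$ under a ring map $\und{T}\Box\und{T} \ra \und{T}\Box\und{T}$ that sends $e$ away from the diagonal component. The argument for $\Z$ is as follows. An étale $\Z$-algebra becomes a product after base change to itself via the diagonal, $T\otimes T \cong T \times J$, and this yields idempotents in $T$ itself because $\Z$ has no nontrivial étale covers. The analogous global input here is that $\und{A}$ has "no nontrivial connected separable extensions", which we would reduce to the classical fact for $\Z$. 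To do this, evaluate at the underlying level: $\und{T}(G/e)$ is separable over $\und{A}(G/e) = \Z$, so $\und{T}(G/e)$ is a finite product of copies of... In fact, by \cite[Proposition 10.3.2]{rognes} every separable commutative $\Z$-algebra that is not $\Z$ (and nonzero) has a nontrivial idempotent $\epsilon \in \und{T}(G/e)$.

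Fourth, propagate $\epsilon$ to the top level. The natural candidate is the norm $\norm_e^G(\epsilon)$, which is idempotent because the norm is multiplicative. Alternatively, when $\epsilon$ is not $G$-fixed, one could use products of its conjugates together with the ring structure to produce a $G$-invariant nontrivial idempotent first.

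I expect this last step to be the main obstacle: showing that the resulting top-level idempotent is nontrivial. The norm could be $0$ or $1$. Moreover, one must also treat the case where $\und{T}(G/e) \cong \Z$ but $\und{T}$ differs from $\und{A}$ at higher levels, and there one would instead apply the same reasoning at intermediate levels $G/H$ using $\und{A}(G/H)$. My first attempt would be to work directly with $e$ at level $G/G$ and take $\mu(\tau(e)\,e)$. By the bimodule property this lies in the image of a split idempotent, and it is trivial only if $\und{T}\Box\und{T} \ra \und{T}$ is an isomorphism, i.e.\ only if $\und{T}$ is a quotient of $\und{A}$.
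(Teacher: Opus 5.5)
\textbf{There is a genuine gap, and the statement as written is false, so it cannot be closed.} The step you flag as the main obstacle cannot be carried out in this generality. Once you drop the separability idempotent, your route is the paper's: pass to $\und{T}(G/e)$, which is separable over $\und{A}(G/e)=\Z$, take a nontrivial idempotent there, and extend it to a splitting of $\und{T}$. The paper does the last step by appeal to Wisdom's \S 3; you leave it open.

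\textbf{The extension step fails.} Let $G=C_2$ and $\und{T}=\coind_e^{C_2}(\Z)$. So $\und{T}(C_2/e)=\Z\times\Z$ with the swap action, $\und{T}(C_2/C_2)=\Z$, $\res(n)=(n,n)$, $\tr(a,b)=a+b$ and $\norm(a,b)=ab$. Put $\epsilon_1=(1,0)$, $\epsilon_2=(0,1)$ and $u_{ij}=\epsilon_i\otimes\epsilon_j$.
\begin{itemize}
\item The top level $(\und{T}\Box\und{T})(C_2/C_2)$ is free on $\tr(u_{11})$ and $\tr(u_{12})$; Frobenius reciprocity identifies $1\otimes 1$ with their sum. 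These are orthogonal idempotents, for instance $\tr(u_{11})^2=\tr(u_{11}(u_{11}+u_{22}))=\tr(u_{11})$.
\item Hence $\und{T}\Box\und{T}\cong\und{T}\times\und{T}$, and $\mu$ is the projection onto the factor containing $u_{11},u_{22}$.
\item The inclusion of that factor is $\und{T}\Box\und{T}$-linear, hence a bimodule section. So $\und{T}$ is separable over $\und{A}$.
\item The free level has the idempotent $\epsilon_1$, but $\norm(\epsilon_1)=0$. The product of its conjugates is $0$ and their join is $1$.
\item A nonzero Tambara functor is nonzero at $G/G$, and $\und{T}(C_2/C_2)=\Z$ has no nontrivial idempotent. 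So $\und{T}$ is not a product of two nonzero Tambara functors.
\end{itemize}
What the argument actually needs is a nontrivial $G$-invariant idempotent in $\und{T}(G/e)$; its norm is then an idempotent at $G/G$ compatible with restrictions and norms. Separability does not supply one, even with finiteness, since here $\und{T}(G/e)=\Z^2$.

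\textbf{Your reduction to $\und{T}(G/G)$ ignores norms.} An idempotent $f\in\und{T}(G/G)$ splits $\und{T}$ only if $\norm_H^K\res^G_H f=\res^G_K f$. For non-solvable $G$ the Burnside ring $A(G)$ has nontrivial idempotents (Dress), yet $\und{A}$ does not split. Indeed, $\und{A}(G/e)=\Z$ forces one factor to vanish at $G/e$, and that factor then has $1=\norm_e^G(1)=\norm_e^G(0)=0$ at $G/G$.

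\textbf{The free-level step is also wrong as you (and the paper) state it.} The fact that $\Z$ is separably closed rests on Minkowski's theorem, which concerns finite \'etale (e.g.\ Galois) extensions. A separable commutative $\Z$-algebra other than $\Z$ need not have a nontrivial idempotent. Since $\Z[\frac12]\otimes\Z[\frac12]\cong\Z[\frac12]$, both $\Z[\frac12]$ and $\Q$ are separable over $\Z$ and connected. So excluding $\und{T}=\und{A}$ is not enough. In fact $\und{A}$ itself is separable over itself, equals $\Z$ at $G/e$, and is not a product by the norm argument above. The proposition therefore needs an additional hypothesis, such as the $G$-invariant idempotent above, rather than a cleverer proof.

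\textbf{Your ``first attempt'' carries no information.} The $\und{T}\Box\und{T}$-linearity of $s$ gives $z\cdot e=(\mu(z)\Box 1)\cdot e$ for every $z$. Hence $\tau(e)\,e=e$ and $\mu(\tau(e)e)=1$ always.
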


\begin{proof}
The ring of integers is separably closed. At the free level, we get a
map of commutative rings $\und{A}(G/e) = \Z \ra \und{T}(G/e)$ and the
separability of $\und{A} \ra \und{T}$ turns $\und{T}(G/e)$ into a
separable commutative $\Z$-algebra. Hence there is a non-trivial  idempotent
$e(G/e) = (e(G/e))^2$ at the free level. As in \cite[\S 3]{wisdom} this idempotent can be extended to a non-trivial idempotent element of $\und{T}$. 
\end{proof}

\begin{prop} \label{prop:sephh}
  Let  $\und{R} \ra \und{T}$ be a map of Tambara functors and assume that
  $\und{T}$ is separable and flat over $\und{R}$. Then the canonical map
  $\und{T} \ra \GHH_*^\und{R}(\und{T})$ is an isomorphism.  
\end{prop}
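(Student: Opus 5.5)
We follow the classical argument: use flatness to identify the Hochschild homology of $\und{T}$ with a derived tensor product, then use separability to see that this derived tensor product is concentrated in degree zero. The canonical map is the inclusion of degree zero, $\und{T} \cong \GHH_0^\und{R}(\und{T})$, since $b^{(1)} = \mu - \mu\circ\gamma = 0$ by commutativity. So it suffices to show $\GHH_q^\und{R}(\und{T}) = 0$ for $q>0$.

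First, set $\und{T}^e := \und{T}\Box_\und{R}\und{T}$ and use the identification $\und{T}\Box_\und{R}\und{T}^{\Box_\und{R} q} \cong \und{T}\Box_{\und{T}^e}(\und{T}^{\Box_\und{R} q+2})$, as was done before the Corollary to Theorem \ref{thm:hh1}. This rewrites the Hochschild complex as $\und{T}\Box_{\und{T}^e}(-)$ applied to the two-sided bar complex $B_q = \und{T}^{\Box_\und{R} q+2}$. The bar complex $B_\bullet \ra \und{T}$ is a resolution: the extra degeneracy $t_0\otimes\cdots\mapsto 1\otimes t_0\otimes\cdots$, built from $\eta$, is a contraction as a complex of right $\und{T}$-modules. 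Since $\und{T}$ is flat over $\und{R}$, each $B_q \cong \und{T}^e \Box_\und{R} \und{T}^{\Box_\und{R} q}$ is flat as a $\und{T}^e$-module, because flatness is preserved under box products and base change. Flat resolutions compute $\Tor$, so
\[ \GHH_q^\und{R}(\und{T}) \cong \Tor_q^{\und{T}^e}(\und{T},\und{T}). \]
The main care is needed here: $\Tor$ in $\und{T}^e$-modules (Mackey functors) must be computable by flat rather than projective resolutions. I would argue this by the usual balancing: take a projective resolution in the other variable and compare the resulting double complex, using that the box product is right exact and that flatness means exactness of $-\Box_{\und{T}^e}\und{F}$.

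Second, separability makes $\und{T}$ a projective $\und{T}^e$-module. The section $s$ is a $\und{T}^e$-module map with $\mu\circ s=\id$, since $\und{T}$-bimodules over $\und{R}$ are exactly $\und{T}^e$-modules. Hence $\und{T}$ is a retract of the free module $\und{T}^e$, and therefore projective and flat over $\und{T}^e$. Consequently $\Tor_q^{\und{T}^e}(\und{T},\und{T})=0$ for $q>0$.

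Alternatively, and avoiding $\Tor$ entirely, one can use the idempotent element $e = s(1)\in \und{T}^e(G/G)$ together with its restrictions. The class $e$ satisfies $\mu(e)=1$ and $(x\otimes 1 - 1\otimes x)e=0$. Multiplication by $e$ then produces a contracting homotopy of the complex $\und{T}\Box_{\und{T}^e}B_\bullet$ in positive degrees, since $\und{T}\Box_{\und{T}^e}-$ applied to the split surjection $B_\bullet\ra\und{T}$ splits. I expect the main obstacle to be making the homological algebra over Green functors rigorous, namely identifying the Hochschild complex with $\und{T}\Box_{\und{T}^e}B_\bullet$ and justifying the use of flat resolutions. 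The separability input itself is formal.
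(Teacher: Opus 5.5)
Your main argument is correct and is the paper's proof: flatness identifies $\GHH_*^{\und{R}}(\und{T})$ with $\Tor_*^{\und{T}\Box_{\und{R}}\und{T}}(\und{T},\und{T})$, and the bimodule section makes $\und{T}$ a retract, hence a projective $\und{T}\Box_{\und{R}}\und{T}$-module. You are simply supplying the bar-resolution and flat-resolution details that the paper leaves implicit.

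One caveat on your alternative: multiplication by $e$ on $\und{T}\Box_{\und{T}^e}B_\bullet$ is just the identity (since $\mu(e)=1$), and it does not raise degree, so it cannot be the contracting homotopy. What you want is one of two things. You can average the one-sided extra degeneracy with $e$ to make it $\und{T}^e$-linear. Or you can apply $\und{T}\Box_{\und{T}^e}-$, which is exact because $\und{T}$ is projective over $\und{T}^e$, to the contractible augmented bar complex. The second route shows that flatness over $\und{R}$ is not even needed here.
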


\begin{proof}
  Thanks to flatness we have 
  \[\GHH_*^\und{R}(\und{T}) \cong \Tor_*^{\und{T} \Box_\und{R} \und{T}}(\und{T}, \und{T}) \]
  but the separability of $\und{T}$ over $\und{R}$ ensures that $\und{T}$ is a
  projective $\und{T} \Box_\und{R} \und{T}$-module. 
\end{proof}

As there is a surjection $\und{I}/\und{I}^2 \ra \und{I}/\und{I}^{>1}$, Theorem \ref{thm:hh1} implies the following fact: 
\begin{cor}
  If $\und{T}$ is separable and flat over $\und{R}$, then $\und{T}$ is
  formally \'etale over $\und{R}$.
  \end{cor}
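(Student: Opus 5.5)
The plan is to show that $\Omega^{1,G}_{\und{T}/\und{R}} = \und{I}/\und{I}^{>1}$ vanishes. Since flatness is assumed, this is exactly what formal \'etaleness in the sense of Hill requires beyond the hypotheses.

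First, apply Proposition \ref{prop:sephh}. Separability and flatness give that the canonical map $\und{T} \ra \GHH_*^\und{R}(\und{T})$ is an isomorphism. Here $\und{T}$ is regarded as concentrated in degree $0$, so this means $\GHH_0^\und{R}(\und{T}) \cong \und{T}$ and $\GHH_q^\und{R}(\und{T}) = 0$ for $q>0$. In particular $\GHH_1^\und{R}(\und{T}) = 0$.

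Second, invoke Theorem \ref{thm:hh1}, which identifies $\GHH_1^\und{R}(\und{T}) \cong \und{I}/\und{I}^2$. Hence $\und{I} = \und{I}^2$ as Mackey functors.

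Third, compare $\und{I}^2$ with $\und{I}^{>1}$. As recalled before Theorem \ref{thm:hh1}, $\und{I}^{>1}$ contains the image of the multiplication map on $\und{I}$, so $\und{I}^2 \subseteq \und{I}^{>1}$. Both are sub-$\und{T}$-modules of $\und{I}$, so the quotient map $\und{I}/\und{I}^2 \ra \und{I}/\und{I}^{>1}$ is a surjection of $\und{T}$-modules. Its source is zero, hence $\Omega^{1,G}_{\und{T}/\und{R}} = \und{I}/\und{I}^{>1} = 0$.

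The only point needing care is the containment $\und{I}^2 \subseteq \und{I}^{>1}$ levelwise, including the transfers of products that generate $\und{I}^2$ as a Mackey ideal. These are handled because $\und{I}^{>1}$ is an ideal, so it is closed under transfers. I would check this directly from Hill's definition: the norm along the fold map $S \sqcup S \ra S$, which is $2$-surjective, is the product. Once this is in place, the argument is a formal chain of the three steps above.
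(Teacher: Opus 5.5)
Your proposal is correct and is essentially the paper's argument: Proposition \ref{prop:sephh} gives $\GHH_1^{\und{R}}(\und{T})=0$, Theorem \ref{thm:hh1} identifies this with $\und{I}/\und{I}^2$, and the surjection $\und{I}/\und{I}^2 \ra \und{I}/\und{I}^{>1}$ then kills $\Omega^{1,G}_{\und{T}/\und{R}}$. The paper simply asserts this surjection, whereas you also justify it via the norm along the fold map $S \sqcup S \ra S$.
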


  We now study how restrictions and norms interact with separability. Recall that for a subgroup $H < G$, the restriction
  \[ i_H^G \colon G\tamb \ra H\tamb \]
  with $i_H^G\und{R}(H/K) = \und{R}(G/H \times_H H/K) = \und{R}(G/K)$ has both a left adjoint (which is the norm functor $N_H^G$) and a right adjoint
  (which is co-induction). 

  \begin{prop}
    \begin{enumerate}
    \item[]
      \item 
    Restriction preserves separability.
\item
  If $\und{R} \ra \und{T}$ is separable, then so is $N_H^G\und{R} \ra N_H^G\und{T}$.
  \end{enumerate}
  \end{prop}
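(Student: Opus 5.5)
For part (1), I will use that restriction $i_H^G$ is strong symmetric monoidal with respect to the box product. It has both a left and a right adjoint, so it preserves all colimits, in particular coequalizers. Hence it carries relative box products to relative box products: there is a natural isomorphism
\[ i_H^G(\und{T} \Box_\und{R} \und{T}) \cong i_H^G\und{T} \Box_{i_H^G\und{R}} i_H^G\und{T}, \]
since $\und{T}\Box_\und{R}\und{T}$ is the coequalizer of the two actions of $\und{R}$ on $\und{T}\Box\und{T}$. Under this identification $i_H^G(\mu)$ is the multiplication of $i_H^G\und{T}$. Given a section $s$ of $\mu$ that is a map of $\und{T}$-bimodules over $\und{R}$, the map $i_H^G(s)$ is a section of $i_H^G(\mu)$. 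The bimodule property is expressed by commutative diagrams involving $\und{T}\Box\und{T}\Box_\und{R}\und{T}$ and similar objects, and these diagrams are preserved by the monoidal functor $i_H^G$. So $i_H^G(s)$ is again a bimodule section.

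For part (2) I plan the same argument with $N_H^G$ in place of $i_H^G$. The relevant facts are that $N_H^G$ is a left adjoint on Tambara functors and that the box product is the coproduct in Tambara functors. In particular $N_H^G$ preserves pushouts, and $\und{T}\Box_\und{R}\und{T}$ is the pushout $\und{T}\amalg_\und{R}\und{T}$ in commutative Tambara functors. Hence
\[ N_H^G(\und{T}\Box_\und{R}\und{T}) \cong N_H^G\und{T}\Box_{N_H^G\und{R}} N_H^G\und{T}, \]
and $N_H^G(\mu)$ is the fold map, which is the multiplication of $N_H^G\und{T}$.

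The obstacle is that $s$ is not a map of Tambara functors, only of bimodules, so I cannot simply apply the functor $N_H^G$ on Tambara functors to it. I will use the module-level norm instead. There is a symmetric monoidal norm $N_H^G \colon \mack_H \ra \mack_G$ on Mackey functors, which underlies the Tambara-functor norm: on a commutative Green functor it agrees with the norm of Tambara functors after forgetting structure. Being strong symmetric monoidal, it sends $\und{T}$-bimodules over $\und{R}$ to $N_H^G\und{T}$-bimodules over $N_H^G\und{R}$. It also sends bimodule maps to bimodule maps and relative box products to relative box products, the last point because it preserves reflexive coequalizers. Applying it to $s$ gives the required bimodule section of $N_H^G(\mu)$.

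The checks I expect to take the most care are these: that the Mackey norm of the underlying Mackey functor of a Tambara functor agrees with its Tambara norm, in the sense of Hill--Hopkins/Mazur, and that it preserves the reflexive coequalizer defining $\Box_\und{R}$. If the latter is problematic, I would instead argue through the pushout description above, which only needs $N_H^G$ on Tambara functors to be a left adjoint.
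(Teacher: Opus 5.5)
Your part (1) is the paper's argument. For part (2) you follow the paper's idea, but more carefully.

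The paper only says that the norm ``is a left adjoint and is strong symmetric monoidal'' and that the same proof goes through. You correctly notice that these two properties belong to two different functors:
\begin{itemize}
\item The norm on Tambara functors is the left adjoint and preserves the pushout $\und{T}\Box_{\und{R}}\und{T}$. It cannot be applied to $s$, which is only a bimodule map.
\item The norm on Mackey functors of Mazur and Hoyer can be applied to $s$ and is strong symmetric monoidal. It is not additive and not a left adjoint.
\end{itemize}

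Your version therefore turns the paper's sketch into an actual proof. The price is two inputs the paper leaves implicit.
\begin{enumerate}
\item For an $H$-Tambara functor, the Mackey norm of the underlying Mackey functor is the underlying Mackey functor, with the same Green structure, of the Tambara norm. This is a theorem of Mazur and Hoyer. State it for Tambara functors rather than for arbitrary commutative Green functors as you wrote, since the Tambara norm is not defined on the latter.
\item The Mackey norm preserves the reflexive coequalizer defining $\Box_{\und{R}}$. This holds because it commutes with sifted colimits.
\end{enumerate}

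Your fallback through the pushout description replaces only the second input. You still need the first one, both to form $N_H^G(s)$ at all and to check that norming the action maps gives the bimodule structure on $N_H^G\und{T}\Box_{N_H^G\und{R}}N_H^G\und{T}$. That check works because the action maps, unlike $s$, are maps of Tambara functors.
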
  

\begin{proof}

  Assume that $\und{R} \ra \und{T}$ is separable in the category of $G$-Tambara
  functors. 
  Let $s \colon \und{T} \ra \und{T} \Box_{\und{R}} \und{T}$ be a
  $\und{T}$-bimodule section of the multiplication map of $\und{T}$. As $i_H^G$ is strong symmetric monoidal and as it preserves coequalizers because it is a left adjoint, the relative box product $i_H^G(\und{T}) \Box_{i_H^G(\und{R})} i_H^G(\und{T})$ is isomorphic to $i_H^G(\und{T} \Box_{\und{R}} \und{T})$ and
  \[ i_H^G(s) \colon i_H^G\und{T} \ra i_H^G(\und{T} \Box_{\und{R}} \und{T}) \]
  is a section of the multiplication map on $i_H^*\und{T}$. The fact that $i_H^*$ is strong symmetric monoidal then also ensures that $i_H^*(s)$ is a map of $i_H^*(\und{T})$-bimodules.

  As the norm is a left adjoint and is strong symmetric monoidal, the analogue of the above proof applied to a map $\und{R} \ra \und{T}$ of $H$-Tambara functors  also shows that $N_H^G(\und{R}) \ra N_H^G(\und{T})$ is separable if $\und{R} \ra \und{T}$ was separable.





\end{proof}

  \section{Separability and Loday constructions} \label{sec:appl-loday}

In the non-equivariant setting, algebraic homology theories like
Hochschild homology interpret coeffients to be located at the base
point. For Hochschild homology of an associative $k$-algebra $A$ one
would place an $A$-bimodule $M$ at the base point of the simplicial
model of the $1$-sphere, $S^1$. In the equivariant setting for a
finite group $G$ the trivial
orbit $G/G$ is the terminal object in the category of finite $G$-sets
and $G$-equivariant maps, so this would be a straightforward analogue
of a basepoint. However, there are natural examples of equivariant homology
theories where one might want to glue the coefficients to a non-trivial
orbit.

For instance for the group or order two there are three naturally occurring
finite
$C_2$-simplicial sets, whose underlying simplicial set models $S^1$:
We can consider $S^1 = \Delta_1/\partial\Delta_1$ with the trivial
$C_2$-action, there is the simplicial model of the one-point
compactification of the real sign-representation, $S^\sigma$, and
there is a rotation circle $S(\pi)$ for the rotation by $180$ degrees which is the unit sphere in the $2$-dimensional rotation representation on $\mathbb{R}^2$:

\vspace{1cm}

\begin{picture}(2,2)
\setlength{\unitlength}{1cm}
\put(1,-1.5){\circle{1}}
\put(1.4,-1.6){$\bullet$}
\put(-0.5,-1.6){$S^1=$}
\end{picture}
\hspace{4cm}
$\xymatrix{& \bullet & \\ S^\sigma = & & \\ & \bullet
  \ar@/^4ex/[uu]\ar@/_4ex/[uu]& }$
\hspace{3cm} $\xymatrix{& & \\
  S(\pi) = &  \circ \ar@/_4ex/[r] & \circ  \ar@/_4ex/[l]\\
 & & }$

Note that $S^\sigma$ is equivalent to the Segal-Quillen subdivision
\cite[Appendix 1]{segal} of $S^1$. 
In the first two examples the zero simplices carry a trivial
$C_2$-action, whereas in the third example they give rise to a free
orbit. Following the definition of equivariant Loday constructions from
\cite{lrz-loday} it is therefore natural to associate to a Tambara
functor $\und{R}$ and to a symmetric $\und{R}$-bimodule
$\und{M}$  the Loday construction which has as zero simplices 
\[\cL_{S(\pi)}^{C_2}(\und{R}; \und{M})_0 = N_e^{C_2}i_e^*\und{M}. \]
Here, $N_e^{C_2}$ denotes the norm functor $N_e^{C_2} \colon e\text{-}\mack
\ra C_2\text{-}\mack$, constructed for instance in \cite{mazur,hoyer}. 

For $S^1$ with a trivial $C_2$-action we get the equivariant
version of Hochschild homology studied above (compare Definition
\ref{defn:hh}) where the tensor products 
are replaced 
by the box product. In particular, 
\[\cL_{S^1}^{C_2}(\und{R}; \und{M})_0 = \und{M}. \]

As $S^\sigma$ is the Segal-Quillen subdivision of $S^1$ with
$S^\sigma_0 = sq(S^1)_0 = S^1_1$ one might place $\und{M}$ at one of
the trivial orbits and $\und{R}$ at the other: 
\[ (\cL_{S^\sigma}^{C_2}(\und{R}; \und{M}))_0 =
  \cL_{S^1}^{C_2}(\und{R}; \und{M}))_1 = \und{M} \Box \und{R}. \] 
This is the convention that we used in \cite{lr}.

In the non-equivariant context if a  map $A \ra B$ is \'etale, then
the canonical map from $B$ to the Hochschild homology of $B$ over $A$,
$B \ra \HH_*^A(B)$, is an isomorphism \cite{weibelgeller}. One can
view this map as the map that sends $B$ located at a point to $B$
located at the basepoint of $\Delta_1/\partial\Delta_1$. If we aim at
a corresponding result in equivariant algebra, then we might have to
adapt this to several flavors of basepoints. So one might expect that
in the case of $S(\pi)$ a formally  \'etale map $\und{R} \ra \und{T}$ of $G$-Tambara functors in the sense of Hill \cite{hill} induces an isomorphism on homotopy groups 
\[ N_e^{C_2,\und{R}}i_e^{C_2}\und{T} \ra \cL_{S(\pi)}^{C_2,\und{R}}(\und{T})\]
where the left hand side is the constant simplicial Tambara functor on the relative norm restriction: 
\[  N_e^{C_2,\und{R}}i_e^{C_2}\und{T} = N_e^{C_2}i_e^{C_2,}\und{T} \Box_{N_e^{C_2}i_e^{C_2}\und{R}} \und{R}. \]

We know by Proposition \ref{prop:sephh} that separability ensures the vanishing of equivariant 
Hochschild homology groups in positive degrees. We show that this
  also holds for the Loday construction on the flip circle:
\begin{prop} \label{prop:flip-separable}

  Assume that $\und{R} \ra \und{T}$ is a map of $C_2$-Tambara functors
that turns $\und{T}$ into a separable $\und{R}$-algebra and assume
that $\und{T}(C_2/e) =: T$ is flat as an $\und{R}(C_2/e)$-module. Then
\[\und{\pi}_i\cL_{S^\sigma}^{C_2,\und{R}}(\und{T})
  \cong 0 \text{ for all } i > 0. \]
\end{prop}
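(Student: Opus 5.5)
We need to show that the Loday construction on $S^\sigma$ has vanishing homotopy in positive degrees. The plan is to reduce to Proposition \ref{prop:sephh}, using the fact that $S^\sigma$ is the Segal--Quillen subdivision of $S^1$ (with trivial action), together with flatness at the underlying level.

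\emph{Step 1: identify the simplicial object.} Recall from \cite{lrz-loday} that $\cL_{S^\sigma}^{C_2,\und{R}}(\und{T})$ is built levelwise from box products over $\und{R}$: orbits of $n$-simplices with trivial isotropy contribute $\und{T}$, and free orbits contribute $N_e^{C_2}i_e^*\und{T}$, relativised over $\und{R}$. In $S^\sigma$ the two $0$-simplices are fixed and the two $1$-simplices are swapped. So in degree $q$ one gets a relative box product of copies of $\und{T}$ and of the relative norm $N_e^{C_2,\und{R}}i_e^*\und{T}$.

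\emph{Step 2: homotopy at the free level.} Evaluation at $C_2/e$ is strong symmetric monoidal and $i_e^*N_e^{C_2}M \cong M\otimes M$. Hence the underlying simplicial ring of $\cL_{S^\sigma}^{C_2,\und{R}}(\und{T})$ is the non-equivariant Loday construction of $T$ over $\und{R}(C_2/e)$ on the underlying simplicial set of $S^\sigma$, which is $\mathrm{sd}(S^1)$. Since the subdivision has the same realization, this is Hochschild homology $\HH_*^{\und{R}(C_2/e)}(T)$. By the restriction result of Section~\ref{sec:etaleseparable}, $T$ is separable over $\und{R}(C_2/e)$, and by hypothesis it is flat. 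The classical case of Proposition \ref{prop:sephh} then gives $\HH_i=0$ for $i>0$.

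\emph{Step 3: the fixed level.} This is the main obstacle, since the box products do not evaluate levelwise at $C_2/C_2$.

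My first attempt would be a contracting homotopy in positive degrees using the separability idempotent. Let $s(1)=e\in(\und{T}\Box_{\und{R}}\und{T})(C_2/C_2)$ be a $\und{T}$-bimodule section of $\mu$. Insert $e$ across the fixed $0$-simplex, mimicking the classical proof that a separable algebra has $\HH_*=\HH_0$ via the extra degeneracy built from $e$. Because $e$ lives at the top level, multiplication by it, using the module structures over the box products, gives maps of Mackey functors. So the homotopy $h=\sum\pm e\cdot s_i$ is a map of simplicial Mackey functors, up to checking the simplicial identities, which rely only on $\mu(e)=1$ and the bimodule property $te=et$.

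If that bookkeeping fails, the fallback is a Tor argument. Identify $\cL_{S^\sigma}$ with a two-sided bar construction over $\und{T}\Box_{\und{R}}\und{T}$, with the norm coming in through the free orbit. Separability makes $\und{T}$ a projective $\und{T}\Box_{\und{R}}\und{T}$-module, so the higher Tor vanishes; flatness at $C_2/e$ is what justifies the bar construction computing Tor.

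Finally, the homotopy Mackey functors are computed levelwise, so Steps 2 and 3 together give $\und{\pi}_i=0$ for all $i>0$.
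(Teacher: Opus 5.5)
\textbf{Steps 1 and 2.} These are fine and match the paper's free-level computation. The paper identifies the free level with $B^R_*(T,T\otimes_R T,T)$ and uses flatness plus projectivity of $T$ over $T\otimes_R T$; your subdivision argument reaches the same conclusion.

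\textbf{The gap is Step 3: neither option works as stated.} Write $\und{N}=N_e^{C_2,\und{R}}i_e^{C_2}\und{T}$. In degree $n$ the Loday construction is $\und{T}\Box_{\und{R}}\und{T}\Box_{\und{R}}\und{N}^{\Box_{\und{R}}n}$, i.e.\ the two-sided bar construction $B^{\und{R}}_\bullet(\und{T},\und{N},\und{T})$. Its faces are the multiplication of $\und{N}$ and the actions of $\und{N}$ on $\und{T}$ through the counit $\epsilon\colon\und{N}\to\und{T}$.
\begin{itemize}
\item \emph{Contracting homotopy.} A contraction must create a new $\und{N}$-factor, so the identities it has to satisfy involve $\mu_{\und{N}}$ and $\epsilon$. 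The element $e=s(1)\in(\und{T}\Box_{\und{R}}\und{T})(C_2/C_2)$ does not live in the right object, and $\mu(e)=1$, $te=et$ are not the relevant identities.
\item \emph{Tor fallback.} This makes the same conflation. The bar construction is over $\und{N}$, not over $\und{T}\Box_{\und{R}}\und{T}$, and the two agree only at $C_2/e$. Take $\und{R}=\und{A}$ and $\und{T}=\und{\Z}^c$: this is separable since $\und{\Z}^c\Box\und{\Z}^c\cong\und{\Z}^c$, and $T=\Z$ is flat. Here $\und{N}=N_e^{C_2}\Z=\und{A}$. But $\und{\Z}^c$ is not a projective $\und{A}$-module, and in fact $\und{\Tor}_1^{\und{A}}(\und{\Z}^c,\und{\Z}^c)(C_2/C_2)\cong\Z/2$. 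Yet the Loday construction in this example is constant. So the bar construction over $\und{R}$ is not computing $\und{\Tor}^{\und{N}}$ at $C_2/C_2$. Flatness at $C_2/e$ cannot fix this, since it gives no control of box products at the fixed level.
\end{itemize}

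\textbf{The missing idea: $\und{N}$ is itself separable over $\und{R}$.} Restriction makes $T$ separable over $R$. The norm $N_e^{C_2}$ preserves separability (Section~\ref{sec:etaleseparable}). Base change along the counit $N_e^{C_2}i_e^{C_2}\und{R}\to\und{R}$ preserves it too.

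Now let $s_N$ be the bimodule section for $\und{N}$, put $\xi_N=s_N\circ\eta\colon\und{R}\to\und{N}\Box_{\und{R}}\und{N}$, and let $\alpha\colon\und{T}\Box_{\und{R}}\und{N}\to\und{T}$ be the left action. Define $h=(\alpha\Box\id)\circ(\id\Box\xi_N\Box\id)\colon B_n\to B_{n+1}$. Then:
\begin{itemize}
\item $d_0h=\id$, from $\mu_{\und{N}}\circ s_N=\id$;
\item $d_1h=hd_0$, from the bimodule property of $s_N$;
\item $d_ih=hd_{i-1}$ for $i\geq 2$, trivially.
\end{itemize}
Together with the analogous map on $\und{T}\Box_{\und{N}}\und{T}$, this is an extra degeneracy of the augmented object $B_\bullet\to\und{T}\Box_{\und{N}}\und{T}$. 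Since $h$ is a composite of maps of Mackey functors, it kills $\und{\pi}_i$ for $i>0$ at $C_2/e$ and $C_2/C_2$ at once, and it does not even use flatness. So your first idea works once $e$ is replaced by the separability element of $\und{N}$, which is essentially the norm of the underlying idempotent in $T\otimes_R T$.

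\textbf{Comparison with the paper.} The paper does no computation at $C_2/C_2$. It deduces the vanishing there from the free level via the graded Tambara structure of $\und{\pi}_*$ and $\norm(1)=1$. That shortcut relies on a unit, which the positive-degree $\und{\pi}_i$ (only $\und{\pi}_0$-modules) do not have. Moreover, a Mackey functor can vanish at $C_2/e$ without vanishing at $C_2/C_2$. The argument via separability of $\und{N}$ avoids this issue.
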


\begin{proof}

  For the Loday construction on $S^\sigma$  we show in \cite[Proposition 3.4]{lrz-realhh} that it doesn't matter which Weyl group action one uses on the norm-restriction terms. For the proof we use the diagonal action. We first show that
  \[ \pi_i\cL_{S^\sigma}^{C_2,\und{R}}(\und{T})(C_2/e)  \cong
    \begin{cases} T, & i = 0, \\ 0, & i > 0. \end{cases}\]
\noindent
At the free level, the relative norm-restriction $N_e^{C_2,
  \und{R}}i_e^{C_2}(\und{T})$ is just
\[ N_e^{C_2,
    \und{R}}i_e^{C_2}(\und{T})(C_2/e) \cong T \otimes_R T. \]
where $R = \und{R}(C_2/e)$. As $\und{T}$ is
separable over $\und{R}$, the commutative ring $T$ is separable over
$R$. The $N_e^{C_2,
    \und{R}}i_e^{C_2}(\und{T})(C_2/e)$-module structure of $T$ is just
  given by the augmentation map followed by the multiplication in $T$
  and hence we can identify the chain complex associated with the
  simplical object $\cL_{S^\sigma}^{C_2,\und{R}}(\und{T})(C_2/e)$ with
  the two-sided bar construction $B_*^R(T, T\otimes_R T, T)$ and as
  $T$ is flat over $R$, its
  homology groups calculate $\Tor_i^{T \otimes_R T}(T,T)$. The
  separability of $T$ over $R$ ensures that $T$ is projective as a $T
  \otimes_R T$-module, so the homology groups in positive degrees 
  vanish and in degree zero we obtain $T \otimes_{T \otimes_R T} T
  \cong T$.

As the homotopy groups
$\und{\pi}_*\cL_{S^\sigma}^{C_2,\und{R}}(\und{T})$ form a graded
Tambara functor, the triviality of the homotopy groups 
$\pi_i\cL_{S^\sigma}^{C_2,\und{R}}(\und{T})(C_2/e)$ for positive $i$
ensure the triviality of
$\pi_i\cL_{S^\sigma}^{C_2,\und{R}}(\und{T})(C_2/C_2)$ for positive
$i$: the norm map sends $1=0$ in the free level to $1$ in the
$C_2/C_2$-level, so we also get  $1=0$ at $C_2/C_2$. 

\end{proof}

Note that the constant simplicial Tambara functor with value $\und{T}$ splits off $\cL_{S^\sigma}^{C_2,\und{R}}(\und{T})$: There are two fixed points in $S^\sigma$ that correspond to the $0$ in the real sign representation and the compactification point $\infty \in S^\sigma$, so we get an inclusion of the constant simplicial $C_2$-set that has $C_2/C_2$ in every simplicial degree into $\cL_{S^\sigma}^{C_2,\und{R}}(\und{T})$. As $C_2/C_2$ is the terminal object in the category of finite $C_2$-sets, we also get a canonical projection map $S^\sigma_n \ra C_2/C_2$ that assembles into a map from $S^\sigma$ to the constant simplicial finite $C_2$-set with value $C_2/C_2$. On the level of Loday constructions this splits off a constant $\und{T}$. 

In the above result we couldn't really pin down that
$\und{\pi}_0\cL_{S^\sigma}^{C_2,\und{R}}(\und{T})(C_2/C_2)$ agrees
with $\und{T}(C_2/C_2)$. We know that we get at least a summand
$\und{T}$ in $\und{\pi}_0\cL_{S^\sigma}^{C_2,\und{R}}(\und{T})$, but a
priori it could happen that
$\und{\pi}_0\cL_{S^\sigma}^{C_2,\und{R}}(\und{T})(C_2/C_2) \cong
\und{T}(C_2/C_2) \oplus M$ for some non-trivial $M$. 
However, for fixed point Tambara functors we can rule that out: 

\begin{cor} \label{cor:flipfix-separable}
Let $R \ra T$ be a map of commutative $C_2$-rings where $R$ has trivial $C_2$-action, such that $\und{T}^\fix$ is separable and flat over $\und{R}^c$. Then 
\[ \und{\pi}_0\cL_{S^\sigma}^{C_2,\und{R}^c}(\und{T}^\fix)(C_2/C_2)
  \cong \und{T}^\fix(C_2/C_2) \]
and 
\[ \und{\pi}_0\cL_{S^\sigma}^{C_2,\und{R}^c}(\und{T}^\fix) \cong \und{T}^\fix \]
as Tambara functors. 
\end{cor}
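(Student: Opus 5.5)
We compute $\und{\pi}_0$ directly from the degree $0$ and degree $1$ parts of the simplicial Tambara functor and compare with the split-off summand $\und{T}^\fix$. By Proposition \ref{prop:flip-separable} we already know that the free level is $\und{\pi}_0(C_2/e) \cong T$, and that the constant simplicial $\und{T}^\fix$ is a retract of $\cL_{S^\sigma}^{C_2,\und{R}^c}(\und{T}^\fix)$ (inclusion at a fixed point, projection via $S^\sigma \ra C_2/C_2$). So $\und{\pi}_0 \cong \und{T}^\fix \oplus \und{M}$ as Mackey functors, where $\und{M}$ is the complementary summand. Since the free level of $\und{\pi}_0$ is already $T$, we get $\und{M}(C_2/e)=0$. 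The remaining task is to show $\und{M}(C_2/C_2)=0$.

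\emph{Step 1.} Show that $\und{\pi}_0(C_2/C_2)$ is generated by $\und{T}^\fix(C_2/C_2)$ together with transfers from the free level. Since transfers from $\und{M}(C_2/e)=0$ vanish, this forces $\und{M}(C_2/C_2)$ to consist of elements that are not transfers. To get this description, write the zero simplices: $S^\sigma_0$ consists of two fixed points, so the degree $0$ term is $\und{T}^\fix \Box_{\und{R}^c} \und{T}^\fix$. Its fixed level is spanned by $t\otimes t'$ with $t,t'\in T^{C_2}$, together with transfers $\tr(a\otimes b)$ (Frobenius reciprocity, as in the example of Section \ref{sec:fixbox}). Modulo the face maps, $t\otimes t'$ is identified with $tt'\otimes 1$. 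The identification comes from the $1$-simplex part $N_e^{C_2}i_e^{C_2}\und{T}^\fix$ in degree $1$: the two face maps multiply the norm-restriction factor into the two fixed points. In particular, the norm $\norm(t)$ for $t \in T^{C_2}$ gives the relation $t^2\otimes 1 \sim 1\otimes t^2$, or more generally $t\otimes 1\sim 1\otimes t$ after restricting to the free level and transferring.

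\emph{Step 2.} Use the fixed point structure to kill the difference classes $t\otimes 1 - 1\otimes t$ at $C_2/C_2$. In $\und{T}^\fix$ the restriction is injective, so I would push the class $x = t\otimes1-1\otimes t$ into the Tambara functor $\und{(T\otimes_R T)}^\fix$. Here I would want an analogue of Corollary \ref{cor:boxfix}, or alternatively use the separability idempotent. Concretely, the separability section gives an idempotent $e \in \und{T}^\fix\Box_{\und{R}^c}\und{T}^\fix(C_2/C_2)$ with $\mu(e)=1$ and $(t\otimes 1)e=(1\otimes t)e$. The plan is to show that in $\und{\pi}_0$ the image of $1-e$ is zero. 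That image restricts to $0$ at the free level, because the free-level calculation already gives $T$. Then multiply: $x = x e + x(1-e)$, where $xe=0$ and $x(1-e)$ lies in the ideal generated by $1-e$. Hence $x=0$.

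The main obstacle is exactly showing that $1-e$ vanishes at $C_2/C_2$ in $\und{\pi}_0$ given only that it vanishes at the free level. Tambara functors need not have injective restrictions, so the norm trick used for $1=0$ in Proposition \ref{prop:flip-separable} does not apply directly. I would first try to show that $\und{\pi}_0$ inherits (MRC) from $\und{T}^\fix$ via Nakaoka's criterion. Failing that, I would exhibit $e$ explicitly as the image of the section $s$ at level $C_2/C_2$, where $s$ is a map of Mackey functors into the degree $0$ term, and check that the degree $1$ face maps identify $e$ with $1$ using the norm of the free-level separability idempotent. Once $\und{M}=0$, the isomorphism $\und{T}^\fix\ra\und{\pi}_0$ is induced by the inclusion of the constant summand, so it is a map of Tambara functors. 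This gives both claimed isomorphisms.
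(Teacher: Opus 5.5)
\noindent Your proposal has a genuine gap at the step that carries all the content. You never show that $1-\bar e$ vanishes in $\und{\pi}_0(C_2/C_2)$, or equivalently that each class $t\otimes 1-1\otimes t$ with $t\in T^{C_2}$ does. Step~1 already presupposes this when it says that $t\otimes t'$ is identified with $tt'\otimes 1$.

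At level $C_2/C_2$ the degree-one face maps give only two kinds of relations. First, $\epsilon(n)\otimes 1$ is identified with $1\otimes\epsilon(n)$, where $\epsilon$ is the augmentation of the relative norm. This only covers elements of the $R$-subalgebra of $T^{C_2}$ generated by the norms $t\tau(t)$ and the traces $t+\tau(t)$. Second, there are transfers of free-level relations. Restricting and transferring gives only $\tr\res(t\otimes 1-1\otimes t)=2(t\otimes 1-1\otimes t)\sim 0$, because $\tr\res$ is multiplication by $\tr(1)=2$.

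Neither of your fallbacks closes this. (MRC) for $\und{\pi}_0$ is equivalent to the claim itself, since $\und{M}(C_2/e)=0$. An idempotent with zero restriction need not vanish in a Tambara functor: $1-\frac{t}{2}$ in the rationalized Burnside $C_2$-Tambara functor is an example.

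The paper avoids idempotents. It asserts that the augmentation $N_e^{C_2}i_e^{C_2}\und{T}^\fix\Box_{N_e^{C_2}i_e^{C_2}\und{R}^c}\und{R}^c\to\und{T}^\fix$ is surjective. Then the image of $d_0-d_1$ equals that of $\mu\Box\id-\id\Box\mu$, and $\und{\pi}_0\cong\und{T}^\fix\Box_{\und{T}^\fix}\und{T}^\fix\cong\und{T}^\fix$ directly as Tambara functors.

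Surjectivity at $C_2/C_2$ is exactly what would give your identification $t\otimes 1\sim 1\otimes t$. It does not follow formally from faithfulness of $i_e^{C_2}$ on fixed point functors, which only yields an epimorphism. For instance, take $R=\mathbb{F}_2$ and $T=\mathbb{F}_2[x]$ with trivial action. The image of the augmentation at $C_2/C_2$ is then $\mathbb{F}_2[x^2]$, and indeed $\und{\pi}_0(C_2/C_2)\cong\mathbb{F}_2[x,y]/(x+y)^2$. So separability must be used at this point.

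Your idempotent idea does supply the missing step once you add one observation.
\begin{itemize}
\item Write $e=\sum_i a_i\otimes b_i+\tr(w)$ with $a_i,b_i\in T^{C_2}$. Since $\tr(1)=2$ in $\und{T}^\fix\Box_{\und{R}^c}\und{T}^\fix$, squaring is additive modulo transfers. Hence $e=e^2=\sum_i a_i^2\otimes b_i^2+\tr(w')$.
\item Each $b_i^2=b_i\tau(b_i)$ is a norm, so it lies in the image of $\epsilon$. Therefore $\overline{a_i^2\otimes b_i^2}=\overline{a_i^2b_i^2\otimes 1}$.
\item By the free-level computation, $\overline{\tr(w')}=\overline{\tr(\mu(w'))\otimes 1}$. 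Together these give $\bar e=\overline{\mu(e)\otimes 1}=1$.
\item By the bimodule property and Frobenius reciprocity, $xe=(\mu(x)\otimes 1)e$ for every $x$ at $C_2/C_2$. So every class equals $\overline{\mu(x)\otimes 1}$, and $\mu$ induces $\und{\pi}_0\cong\und{T}^\fix$.
\end{itemize}
This argument needs neither flatness nor the summand splitting.
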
  

\begin{proof}
  In general, the restriction functor $i_e^{C_2} $ from the category
  of $C_2$-Tambara functors to $e$-Tambara functors (\ie, to
  commutative rings) is not faithful. However, if we restrict
  $i_e^{C_2}$ to the subcategory of fixed point Tambara functors then
  it \emph{is} faithful, because the behaviour at the free level
  determines the behaviour at $C_2/C_2$. Thus we know that the counit
  map $N_e^{C_2}i_e^{C_2}\und{T}^\fix \ra \und{T}^\fix$ and also the
  map  $N_e^{C_2}i_e^{C_2}\und{R}^c \ra \und{R}^c$ is surjective. This implies that the augmentation map on the relative norm
  is also surjective and hence we can calculate $\pi_0$ of the Loday
  construction for $S^\sigma$ as the cokernel of the map
  \[ \mu \Box \id - \id \Box \mu \colon \und{T}^\fix \Box_{\und{R}^c}
    \und{T}^\fix \Box_{\und{R}^c} \und{T}^\fix \ra
    \und{T}^\fix \Box_{\und{R}^c} \und{T}^\fix\]
  which is isomorphic to $\und{T}^\fix$. 

\end{proof}

We relate the vanishing result above to a vanishing result of
reflexive homology, $\HR^+$.

\begin{cor}
Assume that $T$ is a commutative and separable $R$-algebra with a
$C_2$-action by $R$-algebra maps. If  $2$ is invertible in $R$, then
\[ \HR^{+,R}_i(T) \cong \begin{cases} T^{C_2}, & i=0, \\ 0, & i > 0. \end{cases} \]
\end{cor}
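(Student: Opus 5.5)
The plan is to deduce this from Proposition \ref{prop:flip-separable} and Corollary \ref{cor:flipfix-separable}, using the identification of reflexive homology with the homotopy of the Loday construction on $S^\sigma$ for fixed point Tambara functors. We take as known from the literature (Lindenstrauss--Richter--Zou on real and reflexive homology) that for a commutative ring $T$ with involution, $2$ invertible, one has
\[ \HR^{+,R}_*(T) \cong \und{\pi}_*\cL_{S^\sigma}^{C_2,\und{R}^c}(\und{T}^\fix)(C_2/C_2). \]
More precisely, $\HR^+$ is the homology of the $C_2$-fixed points of the Hochschild complex, with $C_2$ acting by the reflexive involution. The Loday construction at level $C_2/C_2$ computes the same thing: since $2$ is invertible, Corollary \ref{cor:boxfix} turns every box product of $\und{T}^\fix$'s over $\und{R}^c$ into $\und{(T^{\otimes_R n})}^\fix$, and norms of fixed point functors are again fixed point functors. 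So in each simplicial degree the $C_2/C_2$-level is the fixed points of the corresponding Hochschild term under the flip action, compatibly with the faces. If this identification is not already available in the form needed, I would verify it degreewise in this way; I expect this to be the main point requiring care.

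Next, check the hypotheses of the earlier results. Since $T$ is separable over $R$ and $2=|C_2|$ is invertible, Example \ref{ex:fixsep} gives that $\und{T}^\fix$ is separable over $\und{R}^c$. For flatness, a separable algebra over $R$ need not be flat in general. However, in this setting the statement intends separable algebras whose underlying module is flat, or else I would argue flatness away as follows. The Hochschild complex $T^{\otimes_R(n+1)}$ computes $\HH$, and separability of $T$ alone gives vanishing of $\HH^R_i(T)$ for $i>0$ when $T$ is $R$-projective. If needed, I would add the assumption that $T$ is $R$-projective, as in the standard setting of separable algebras, noting it is implicit. Granting this, Proposition \ref{prop:flip-separable} gives $\und{\pi}_i = 0$ for $i>0$ at both levels. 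In particular $\HR^{+,R}_i(T) = 0$ for $i>0$.

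Degree zero comes directly from Corollary \ref{cor:flipfix-separable}: $\und{\pi}_0$ at $C_2/C_2$ is $\und{T}^\fix(C_2/C_2) = T^{C_2}$. As a sanity check, independent of the Loday framework, since $2$ is invertible taking $C_2$-fixed points is exact. Hence $\HR^+_*(T) \cong \HH^R_*(T)^{C_2}$, with $\HH_0 = T$ and higher groups vanishing by separability. This yields $T^{C_2}$ and $0$ directly, and could even serve as the whole proof if the Loday identification is cumbersome.
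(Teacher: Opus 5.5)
Your main route is the paper's first proof. Example~\ref{ex:fixsep} gives separability of $\und{T}^\fix$ over $\und{R}^c$, the identification $\HR^{+,R}_i(T)\cong\und{\pi}_i\cL_{S^\sigma}^{C_2,\und{R}^c}(\und{T}^\fix)(C_2/C_2)$ is quoted from the literature (the paper does not re-verify it either), and Proposition~\ref{prop:flip-separable} together with Corollary~\ref{cor:flipfix-separable} finishes. Your ``sanity check'' is the paper's second proof. There the reduction to fixed points comes from Graves' bicomplex: its columns compute $\HH^R_*(T)$, and its rows then give $H_*(C_2;\HH^R_*(T))$, which collapses to $T_{C_2}\cong T^{C_2}$ because $2$ is invertible. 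This is the same collapse as your exactness of fixed points, except that you state the fixed-points description of $\HR^{+}$ as a definition rather than deriving it.

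What you should change is the hedge about flatness. You correctly notice that Proposition~\ref{prop:flip-separable} and Corollary~\ref{cor:flipfix-separable} assume flatness, which the statement does not; the paper's first proof passes over this too. But adding an $R$-projectivity hypothesis is unnecessary, because separability alone gives $\HH^R_i(T)=0$ for $i>0$:
\begin{itemize}
\item The standard Hochschild complex is $T\otimes_{T\otimes_RT}B_*(T)$ with $B_n(T)=T^{\otimes_R(n+2)}$.
\item The augmented bar complex $B_*(T)\to T$ is exact for every $R$-algebra, via the contracting homotopy $a_0\otimes\cdots\otimes a_{n+1}\mapsto 1\otimes a_0\otimes\cdots\otimes a_{n+1}$.
\item Separability makes $T$ projective, hence flat, over $T\otimes_RT$. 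So $T\otimes_{T\otimes_RT}-$ preserves this exactness and leaves $T\otimes_{T\otimes_RT}T\cong T$ in degree $0$.
\end{itemize}
For instance, $T=\mathbb{F}_3$ over $R=\Z[\frac12]$ is separable but not flat, and its Hochschild complex is visibly acyclic in positive degrees. Hence your fixed-point argument, like the paper's second proof, proves the corollary exactly as stated. The same argument applied to the two-sided bar construction $B^R_*(T,T\otimes_RT,T)$, which appears at the free level in Proposition~\ref{prop:flip-separable}, shows that flatness is not needed there either.

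One smaller correction concerns your sketch of the Loday identification. Absolute norms of fixed point Tambara functors are \emph{not} fixed point functors, even with $2$ inverted. For example, $N_e^{C_2}\Z[\frac12]$ is the Burnside Tambara functor with $\norm(2)=2+t$ inverted; its $C_2/C_2$-level is $\Z[\frac12]\times\Z[\frac12]$, with non-injective restriction. So a degreewise check must use the relative norms $N_e^{C_2}i_e^{C_2}\und{T}^\fix\Box_{N_e^{C_2}i_e^{C_2}\und{R}^c}\und{R}^c$ that actually occur in $\cL_{S^\sigma}^{C_2,\und{R}^c}$.
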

\begin{proof}  
  We actually give two proofs:

By Example \ref{ex:fixsep} we obtain that $\und{T}^\fix$ is a commutative and  separable $\und{R}^c$-algebra Tambara functor.   
  By \cite[Theorem 6.5]{lr} we know that under the above assumptions 
\[ \HR^{+,R}_i(T) \cong
  \und{\pi}_i\cL_{S^\sigma}^{C_2,\und{R}^c}(\und{T}^\fix)(C_2/C_2)\]
and therefore Corollary \ref{cor:flipfix-separable} implies the
claim. 

The second proof argues directly with reflexive homology: Daniel
Graves developed a spectral sequence converging to $\HR^{+,R}(T)$ in 
  \cite{graves} by constructing a bicomplex whose total complex has
  $\HR^{+,R}(T)$ as its homology groups. Calculating the vertical
  homology of this bicomplex results in the Hochschild homology groups
  $\HH_*^R(T)$ in every column.  The horizontal homology then results
  in the homology groups $H_*(C_2;\HH_*^R(T))$ of the group $C_2$ with
  coefficients in $\HH_*^R(T)$. By assumption the groups $\HH_*^R(T)$
  are concentrated in the zero line with value $\HH_0^R(T) = T$. As $2$ is
  invertible in $R$, the group homology is trivial in positive
  degrees. Thus in total we obtain 
\[ \HR_n^{+,R}(T) \cong \begin{cases} T_{C_2}, & n = 0, \\ 0,
                                                        &
                                                          n>0. \end{cases} \] 
  Here, $T_{C_2}$ denotes the $C_2$-coinvariants of $T$ and in our
  case these are isomorphic to the $C_2$-invariants. 
\end{proof}
\begin{rem}
  In the non-equivariant context formal \'etaleness of a commutative algebra
  $A$ over $k$ implies that there is a weak equivalence  $A \simeq
  \cL_X^k(A)$ for all connected finite simplicial 
  sets $X$; see   \cite[Proposition 2.11]{lr-stable} for a proof. This
  result does  not hold in the equivariant setting, as the following
  example shows.  
\end{rem}

\begin{ex}
We claim that the zeroth homology of the twisted cyclic nerve of
\cite[Definition 2.20]{bghl} vanishes at the free level 
for the $C_2$-Galois extension $\und{\Q}^c \rightarrow
\und{\Q(i)}^\fix$. The simplicial structure of the twisted cyclic
nerve almost agrees with the one of equivariant Hochschild homology,
but the last face map is twisted with the group action. 
For the proof we show by brute force that the differential
\[ d^\tau \colon \und{\Q(i)}^\fix \Box_{\und{\Q}^c} \und{\Q(i)}^\fix \ra
  \und{\Q(i)}^\fix\] is surjective at the free level. 

We can write down the differential explicitly as 
\begin{align*}
  d^\tau \colon \Q(i) \otimes_\Q \Q(i) & \ra \Q(i), \\
  d^\tau((a+bi) \otimes (u + vi)) & = (a+bi)(u+iv) - (u-vi)(a+bi), 
\end{align*}
and this is $-2bv + 2avi$. A general element $x +yi$ is then hit by
the differential for arbitrary $u$, $v=1$, $b = -\frac{x}{2}$ and $a =
\frac{y}{2}$.

\end{ex}

\section{Galois extensions of Tambara functors} \label{sec:galois}
In non-equivariant algebra Galois extensions are an important source
of separable and \'etale extensions. We study the corresponding notion for Tambara
functors. Here, the definition is based on Galois extensions of commutative
rings, developed originally by Auslander and Goldman \cite[Appendix]{auslandergoldman}. 

Before we state our definition, we note the following fact: 

\begin{lem}
  Assume that $\und{R} \ra \und{T}$ is a morphism of $G$-Tambara
  functors. 
If a finite group $H$ acts on the left on $\und{T}$ through
$\und{R}$-algebra maps in $G$-Tambara functors, then the fixed points
$\und{T}^H$ defined levelwise as $\und{T}^H(G/K) = \und{T}(G/K)^H$
constitute a sub-$G$-Tambara functor of $\und{T}$ and $\und{T}^H$ is
still a commutative $\und{R}$-algebra. 
\end{lem}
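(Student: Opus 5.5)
The plan is to check the claim levelwise and then check compatibility with every structure map. An action of $H$ on $\und{T}$ through $\und{R}$-algebra maps of $G$-Tambara functors means the following. For each $h \in H$ we have a morphism of $G$-Tambara functors $h \colon \und{T} \ra \und{T}$ with $h \circ \eta = \eta$, and these compose according to the group law. In particular, at each finite $G$-set $S$ the map $h_S \colon \und{T}(S) \ra \und{T}(S)$ is a ring map. Moreover, $h$ commutes with all restrictions $R_f$, transfers $T_f$ and norms $N_f$ along maps $f$ of finite $G$-sets. I would first define $\und{T}^H(S) = \und{T}(S)^H$ for every finite $G$-set $S$, not only for orbits. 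Since $\und{T}$ turns disjoint unions into products and each $h$ respects this decomposition, we get $\und{T}^H(S \sqcup S') \cong \und{T}^H(S) \times \und{T}^H(S')$. So the definition on orbits $G/K$ in the statement determines everything, and the additivity axiom holds.

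Next I would verify closure. Each $\und{T}(S)^H$ is a subring of $\und{T}(S)$, because it is the common equalizer of ring maps $h_S$ and $\id$. For $f \colon S \ra S'$ and $x \in \und{T}(S')^H$ we have $h(R_f x) = R_f(hx) = R_f x$. The same one-line argument for $T_f$ and $N_f$ shows that $\und{T}^H(S)$ is preserved by transfers and norms, and similarly by the conjugation (Weyl) actions, which are restrictions along automorphisms of $S$. The Tambara axioms, including the distributive law relating $N_f$ and $T_g$ via exponential diagrams, hold in $\und{T}^H$ because they hold in $\und{T}$ and all operations are computed there. Hence $\und{T}^H$ is a sub-$G$-Tambara functor. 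It is commutative because $\und{T}$ is.

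For the algebra structure, I would note that $h \circ \eta = \eta$ forces the image of $\eta_S \colon \und{R}(S) \ra \und{T}(S)$ to lie in $\und{T}(S)^H$. So $\eta$ factors uniquely through the inclusion $\und{T}^H \hookrightarrow \und{T}$. The factored map is a morphism of Tambara functors because the inclusion is injective and compatible with all structure maps.

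The only point needing a little care is that "$H$ acts through $\und{R}$-algebra maps in $G$-Tambara functors" really includes compatibility with norms, and not just with the Mackey structure. This is exactly what is needed for norms to preserve fixed points; with this reading of the hypothesis there is no genuine obstacle. The remaining steps are routine diagram checks.
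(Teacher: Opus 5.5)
Your proposal is correct and takes the same approach as the paper. Each $h \in H$ is a morphism of $G$-Tambara functors that fixes the image of $\und{R}$, so the ring operations, restrictions, transfers and norms all preserve $H$-fixed points, and the unit factors through $\und{T}^H$. You simply spell out the routine checks that the paper's three-line proof leaves implicit: additivity on disjoint unions, closure under $R_f$, $T_f$ and $N_f$, and the factorization of $\eta$ through the injective inclusion.
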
  

\begin{proof}
As $H$ acts via maps of $G$-Tambara functors, the structure maps of
$\und{T}$ commute with the $H$-action. Therefore the fixed points
inherit a $G$-Tambara structure. As $H$ fixes $\und{R}$, the fixed
points are still a commutative $\und{R}$-algebra. 
\end{proof}

For a map of $G$-Tambara functors $\varphi \colon \und{R} \ra \und{T}$ we
denote the multiplication map of $\und{T}$ over  $\und{R}$  by
$\mu \colon \und{T} \Box_{\und{R}} \und{T} \ra \und{T}$. 

\begin{defn}
Let $H$ and $G$ be finite groups. 
Let $\varphi \colon \und{R} \ra \und{T}$ be a monomorphism of $G$-Tambara
functors. We call  $\varphi \colon \und{R} \ra \und{T}$ an \emph{$H$-Galois
extension of $G$-Tambara functors} if the following conditions hold:

\begin{enumerate}
\item
The group $H$ acts on the left on $\und{T}$ through
$\und{R}$-algebra maps in $G$-Tambara functors.

\item
The morphism  $\varphi \colon \und{R} \ra \und{T}$
induces an isomorphism 
\[ \varphi \colon \und{R} \cong \und{T}^{H},  \]
\ie, for all $K < G$
\[ \varphi_K \colon \und{R}(G/K) \cong \und{T}(G/K)^{H}.\]
\item
  The morphism  
  \[ \chi \colon \und{T} \Box_{\und{R}} \und{T} \ra
    \prod_H  \und{T}\]
  is an isomorphism of $G$-Tambara functors, where $\chi$ is defined as
  \[ \xymatrix{ \und{T} \Box_{\und{R}} \und{T} \ar@/_4ex/[rrrr]_{\chi}
      \ar[rrr]^{(\id \Box h)_{h \in H}}
      &&& \prod_H \und{T} \Box_{\und{R}} \und{T} \ar[r]^{\prod_H \mu} & \prod_H \und{T} }\] 
\end{enumerate}
\end{defn}
  
\begin{rem}
  If $\varphi \colon \und{R} \ra \und{T}$ is an $H$-Galois extension of
  $G$-Tambara functors, then the underlying extension of commutative rings
  \[ \varphi_e \colon R = \und{R}(G/e) \ra \und{T}(G/e) = T\]
  is an $H$-Galois extension of commutative rings because $\varphi_e\colon R
  \hookrightarrow T$ is a inclusion of commutative rings, so that $H$ acts on
  $T$ by $R$-algebra maps, fixing $R$, such that $\varphi_e \colon R \cong
  T^H$ and $\chi_e \colon (\und{T} \Box_{\und{R}} \und{T})(G/e) = T \otimes_R T
  \ra \prod_H T$ is an isomorphism, given by $t_1 \otimes t_2 \mapsto
  (t_1 h(t_2))_{h \in H}$, thus $\varphi_e$ satisfies the conditions of
  \cite[Definition 1.4]{chr}.  
\end{rem}

We can also transform every Galois extension of commutative rings into a
Galois extensions of $G$-Tambara functors via the constant Tambara functor:
\begin{prop}
Assume that $R \ra T$ is an $H$-Galois extension of commutative rings. Then $\und{R}^c \ra \und{T}^c$ is an $H$-Galois extension of $G$-Tambara functors for every finite group $G$ and $\und{T}^c$ is flat as an $\und{R}^c$-module. 
\end{prop}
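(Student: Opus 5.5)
The plan is to check the three defining conditions of an $H$-Galois extension levelwise, using that the constant Tambara functor $\und{T}^c$ has value $T$ at every level $G/K$, with restrictions the identity, transfers multiplication by the index, and norms the corresponding power map. First we define the $H$-action on $\und{T}^c$ levelwise by the given action on $T$. Each $h \in H$ is an $R$-algebra map of $T$, so it commutes with identities, multiplication by integers and powers. Hence $h$ is a map of $G$-Tambara functors $\und{T}^c \ra \und{T}^c$, and it fixes $\und{R}^c$. The map $\und{R}^c \ra \und{T}^c$ is levelwise the inclusion $R \ra T$, so it is a monomorphism. Since $(\und{T}^c)^H(G/K) = T^H = R = \und{R}^c(G/K)$, conditions (1) and (2) hold.

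For condition (3) we use the isomorphism $\und{T}^c \Box_{\und{R}^c} \und{T}^c \cong \und{(T \otimes_R T)}^c$ of \cite[Lemma 5.1]{lrz}, already used in the Example on constant separable Tambara functors. We first check that under this identification $\chi$ becomes the constant functor applied to the classical map $T \otimes_R T \ra \prod_H T$, $t_1 \otimes t_2 \mapsto (t_1 h(t_2))_h$. The isomorphism of \cite{lrz} is natural, and it is compatible with multiplication because it is an isomorphism of Tambara functors. So $\id \Box h$ corresponds to $\und{(\id \otimes h)}^c$, and $\mu$ corresponds to $\und{\mu}^c$. Moreover $\prod_H \und{T}^c = \und{(\prod_H T)}^c$, since products of Tambara functors are formed levelwise. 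Thus $\chi = \und{\chi_T}^c$, and $\chi_T$ is an isomorphism by the Galois hypothesis, so $\chi$ is an isomorphism of $G$-Tambara functors.

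The step I expect to be the main obstacle is flatness of $\und{T}^c$ over $\und{R}^c$. Classically, $T$ is finitely generated projective over $R$ for an $H$-Galois extension \cite{chr}, so $T$ is a direct summand of $R^n$. We would like to transport this: the constant functor $\und{(-)}^c$ from $R$-modules to $\und{R}^c$-modules is additive, so $\und{T}^c$ is a retract of $(\und{R}^c)^n$ as an $\und{R}^c$-module. The point to verify is that the module structure coming from $\und{R}^c \Box \und{T}^c \ra \und{T}^c$ is the constant one; this follows from the description of box products of constant functors in \cite{lrz}, which holds at least for $R$-modules of this form. Free modules $(\und{R}^c)^n$ are flat, since $\und{R}^c \Box_{\und{R}^c} -$ is the identity and finite sums commute with box products. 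Retracts of flat modules are flat. Hence $\und{T}^c$ is flat, indeed projective, over $\und{R}^c$. If the constant identification for modules turns out to be delicate, a fallback is to use only that $T$ is a retract of a finite free module and argue directly with the retraction maps $T \ra R^n \ra T$, which are $R$-linear and thus induce $\und{R}^c$-linear maps of constant functors.
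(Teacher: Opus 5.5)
Your proposal is correct. For conditions (1)--(3) it follows the paper: the $H$-action is defined levelwise, the fixed points are $T^H = R$ at every level, and condition (3) comes from the identification $\und{T}^c \Box_{\und{R}^c} \und{T}^c \cong \und{(T\otimes_R T)}^c$ of \cite[Lemma 5.1]{lrz}. Here you are more careful than the paper, since you check that $\chi$ actually corresponds to $\und{\chi_T}^c$ under this identification.

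The genuine difference is the flatness argument.

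\emph{The paper's route.} It proves $\und{M} \Box_{\und{R}^c} \und{T}^c(G/K) \cong \und{M}(G/K) \otimes_R T$ for every $\und{R}^c$-module $\und{M}$. To do so it writes elements of the box product, following Strickland, as transfers $T_p(m \otimes t)$. Since restrictions in $\und{T}^c$ are identities, Frobenius reciprocity pushes each such element down to $T_p(m) \otimes t$. Flatness of $T$ over $R$ then finishes the argument.

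\emph{Your route.} You use that $T$ is finitely generated projective over $R$. You transport a splitting $T \ra R^n \ra T$ through the additive functor $N \mapsto \und{N}^c$ from $R$-modules to $\und{R}^c$-modules, so $\und{T}^c$ becomes a retract of $(\und{R}^c)^n$.

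\emph{What each buys.} Your route avoids element-chasing in box products. It also gives the stronger conclusion that $\und{T}^c$ is projective over $\und{R}^c$, which is worth noting since the paper later remarks that it has no general projectivity argument for Galois extensions of Tambara functors. The paper's formula instead works for an arbitrary $R$-module $T$, projective or not. It also gives an explicit description of $-\Box_{\und{R}^c}\und{T}^c$, which the paper reuses later when discussing faithfulness of $\und{R}^c \ra \und{T}^c$.

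One small correction. That the $\und{R}^c$-module structure on $\und{T}^c$ is the constant one needs no description of box products. The action is induced by the unit $\und{R}^c \ra \und{T}^c$, so at $G/K$ its pairing is simply $R \otimes T \ra T$. For any $R$-module $N$, the levelwise action on $\und{N}^c$ satisfies Frobenius reciprocity directly, because restrictions are identities and transfers are multiplication by the index. For the same reason, $R$-linear maps induce $\und{R}^c$-module maps, which is all your retract argument uses.
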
  

\begin{proof}
  We let $H$ act on $\und{T}$ by acting with $H$ on every level
  $\und{T}^c(G/K) = T$. As the Weyl action on $\und{R}^c$ and $\und{T}^c$ is
  trivial, it commutes with the $H$-action on $\und{T}^c$. As the restriction maps are the identity, they also commute with the $H$-action. The  transfer
  just multiplies by the index and the norm takes an element to the power of that element by the index, thus they also commute with the $H$-action. The $H$-fixed points in every level are precisely $R$. 

  Finally, as $\und{T}^c \Box_{\und{R}^c} \und{T}^c \cong \und{(T \otimes_R T)}^c$ by \cite[Lemma 5.1]{lrz}, we get that
  \[ \und{T}^c \Box_{\und{R}^c} \und{T}^c \cong \und{(T \otimes_R T)}^c \cong \und{\prod_H T}^c. \]
      As constant Tambara functors are fixed point Tambara functors and as the latter are right adjoints, they commute with products, so in total we obtain
 \[ \und{T}^c \Box_{\und{R}^c} \und{T}^c \cong \prod_H \und{T}^c. \]

 It remains to show that $\und{T}^c$ is flat over $\und{R}^c$. To that end we prove that for every $\und{R}^c$-module $\und{M}$ we have at every level $G/K$ that
 \begin{equation} \label{eq:tensor-constant}
   \und{M} \Box_{{R}^c} \und{T}^c(G/K) \cong \und{M}(G/K) \otimes_R T.\end{equation}
 This proves the claim because for every $H$-Galois extension $R \ra T$ of commutative rings, $T$ is flat (even projective) as an $R$-module \cite[Theorem 1.3]{chr}. 

 In order to show \eqref{eq:tensor-constant} we use \cite[Proposition 3.14]{strickland} where Strickland  shows that we can represent every element in the box product $\und{M} \Box \und{T}^c(G/K)$ as $T_p(m \otimes t)$ with 
 \[ m \otimes t \in \und{M}(G/L) \otimes \und{T}^c(G/L) = \und{M}(G/L) \otimes T\]
 and where $p \colon G/L \ra G/K$ a map of finite $G$-sets. With the help of Frobenius reciprocity \cite[Lemma 3.13]{strickland}
 and by writing every $t \in \und{T}^c(G/L)$ as $R_p(t) = \id(t) = t$ we can rewrite
 $T_p(m \otimes t)$ as
 \[ T_p(m \otimes t) = T_p(m \otimes R_p(t)) = T_p(m) \otimes t.\]
 Therefore we can push every representative in $\und{M} \Box \und{T}^c(G/K)$ to
 an element $T_p(m) \otimes t \in \und{M}(G/K) \otimes T$. Using that $\und{M} \Box_{\und{R}^c} \und{T}^c$ is the coqualizer of a diagram involving
 \[ \und{M} \Box \und{R}^c \Box  \und{T}^c \cong \und{M} \Box \und{(R \otimes T)}^c\]
 and $\und{M} \Box \und{T}^c$,  we get the claim. 
 
\end{proof}
As for commutative rings we always get non-connected examples:
\begin{ex}
  If $\und{R}$ is an arbitrary $G$-Tambara functor for any finite group $G$ and if $H$ is a finite group, then
  \[ \und{R} \ra \prod_H \und{R}\]
  is an $H$-Galois extension. Here, $H$ acts on the indexing set of the product and $\und{R}$ embeds into  $\prod_H \und{R}$ as a diagonal copy. 

\end{ex}
The next result is a key property of Galois extensions: 
\begin{prop}
If $\varphi \colon \und{R} \ra \und{T}$ is an $H$-Galois extension of
$G$-Tambara functors, then $\und{R}  \ra  \und{T}$ is separable. 
\end{prop}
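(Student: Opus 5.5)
Mirror the classical proof: use the Galois isomorphism $\chi$ to transport the obvious idempotent splitting of $\prod_H \und{T}$ back to $\und{T} \Box_{\und{R}} \und{T}$. Let $e \in \prod_H \und{T}(G/G)$ be the idempotent that is $1$ in the coordinate $h = 1$ and $0$ elsewhere, and set $\epsilon = \chi_{G/G}^{-1}(e) \in (\und{T} \Box_{\und{R}} \und{T})(G/G)$. Since $\chi$ is an isomorphism of $G$-Tambara functors, $\epsilon$ is an idempotent at the top level. Its restrictions $\epsilon_K = \res^G_K(\epsilon)$ are then idempotents at every level $G/K$, and they correspond under $\chi$ to the unit of the $h=1$ factor.

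Define the section by $s(t) = \epsilon \cdot (t \Box 1)$, or equivalently $s = \chi^{-1} \circ \iota_1$. Here $\iota_1 \colon \und{T} \ra \prod_H \und{T}$ is the inclusion of the $h=1$ factor, extended by zero. With the first description, $s_{G/K}(t) = \res^G_K(\epsilon)\cdot(t \Box 1)$ at level $G/K$. This is a morphism of Mackey functors because of Frobenius reciprocity in the Green functor $\und{T} \Box_{\und{R}} \und{T}$: transfers satisfy $\tr(\res(\epsilon)\, x) = \epsilon \cdot \tr(x)$, and restrictions are ring maps. Left $\und{T}$-linearity (through $t \mapsto t \Box 1$) is immediate.

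Three properties remain to be checked, each by applying $\chi$ and working in $\prod_H \und{T}$.
\begin{enumerate}
\item \emph{Section.} $\mu \circ s = \id$. Since $\mu$ is the $h=1$ component of $\chi$, we get $\mu(\epsilon (t\Box 1)) = \mathrm{pr}_1(e)\cdot t = t$.
\item \emph{Bimodule map.} We need $\epsilon\cdot(t \Box 1) = \epsilon\cdot(1\Box t)$. Under $\chi$, the element $t \Box 1$ goes to $(t)_h$ and $1 \Box t$ goes to $(h(t))_h$. These agree in the $h=1$ coordinate, which is the only coordinate where $e$ is nonzero. So right linearity $s(t t') = s(t)\,(1 \Box t')$ follows.
\item \emph{$\und{R}$-linearity.} This follows from the above, because $\und{R}$ acts through $\und{T}$ on either side.
\end{enumerate}

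I expect the main obstacle to be the bookkeeping in Tambara functors: checking that $\chi$ sends $t\Box 1$ and $1\Box t$ to the stated tuples at every level $G/K$, not only at the free level. I would verify this first on generators of the form $\tr_p(a \otimes b)$ (Strickland's description of the box product), using that $\chi$ is a map of Green functors. I would also confirm that $\chi$ really is a ring map at each level, which holds because each $\id \Box h$ and $\mu$ are Green functor maps.

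Alternatively, I could avoid idempotents altogether and simply take $s = \chi^{-1}\circ \iota_1$. This is automatically a Mackey map, since $\iota_1$ is one: it is a product inclusion in an additive category. Its bimodule property then follows from the $\chi$-equivariance computation above.
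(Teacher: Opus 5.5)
Your proposal is correct. The section you define, $s=\chi^{-1}\circ\iota_1$, is the paper's $\sigma=\chi^{-1}\circ i^{(e)}$. Your proof of $\mu\circ s=\id$, using that $\mu$ is the $e$-coordinate of $\chi$, is also the paper's.

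Where you genuinely differ is the bimodule property. The paper adapts Takeuchi's diagrammatic argument:
\begin{enumerate}
\item it derives $\mu\circ(\id\Box h)\circ\sigma=\delta_{h,e}\,\id_{\und{T}}$;
\item it rewrites $\id_{\und{T}}$ as $\mu\circ(\id\Box s_H)\circ\sigma$;
\item it builds auxiliary maps $\psi_L,\psi_R$ that are visibly right resp.\ left $\und{T}$-linear;
\item it proves $\psi_L=\theta=\psi_R$ and $\psi_R=\sigma$ by a chain of diagram chases.
\end{enumerate}
You instead use that $\chi$ is a levelwise ring isomorphism. Then $\epsilon=\chi^{-1}(e)$ is an idempotent whose restrictions correspond to $(1,0,\dots,0)$. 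The bimodule property reduces to the one-line identity $(1,0,\dots,0)\cdot(t)_h=(t,0,\dots,0)=(1,0,\dots,0)\cdot(h(t))_h$ in $\prod_H\und{T}$. This is the classical separability-idempotent argument. It is much shorter, and it also produces an explicit separability idempotent in $(\und{T}\Box_{\und{R}}\und{T})(G/G)$.

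The price is that you work with elements. You need that the module structures $\mu\Box\id$ and $\id\Box\mu$ on $\und{T}\Box_{\und{R}}\und{T}$ are multiplication by $t\Box 1$ and $1\Box t$ in this commutative Green functor. You also need that module-map conditions can be checked levelwise on elements, since maps out of a box product are Dress pairings. Both facts are standard.

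The bookkeeping you anticipate is lighter than you expect. Already as morphisms, $\chi\circ(\id\Box\eta)$ is the diagonal and $\chi\circ(\eta\Box\id)=(h)_{h\in H}$, because $h\circ\eta=\eta$ and $\eta$ is a unit for $\mu$. So no check on Strickland generators is needed.

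The paper's route stays inside the calculus of morphisms of Mackey functors and never mentions idempotents. It is not really more economical in hypotheses, though: it also uses the left $\und{T}$-linearity $m\circ(\id\Box\chi)=\chi\circ(\mu\Box\id)$ and the multiplicativity of the $H$-action.
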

\begin{proof}
We define $\sigma \colon \und{T} \ra \und{T} \Box_{\und{R}} \und{T}$
as $\chi^{-1} \circ i^{(e)}$ where $i^{(e)} \colon \und{T} \ra \prod_H
\und{T}$ is the inclusion into the factor corresponding to the neutral
element $e \in H$. Note that $i^{(e)}$ does not preserve the
multiplicative unit, but it is a map of Mackey functors and preserves
the multiplication and commutes with the norm.

We have to show that $\mu \circ \sigma = \id_\und{T}$ and that
$\sigma$ is a $\und{T}$-bimodule map relative $\und{R}$. 
For the first property, consider the following commutative diagram:

\bigskip

\[ \xymatrix{ \und{T} \ar@{=}[drr]\ar[rr]^{i^{(e)}}  \ar@/^6ex/[rrrrr]^\sigma& &\prod_H \und{T}
    \ar[d]^{p^{(e)}} 
    & \prod_H \und{T} \Box_{\und{R}} \und{T} \ar[d]^{p^{(e)}} \ar[l]_(0.55){\prod_H \mu} 
    & &\und{T} \Box_{\und{R}} \und{T} \ar[ll]_{(\id \Box h)_{h \in H}} \ar@{=}[dll]\\
&  & \und{T} & \und{T} \Box_{\und{R}} \und{T} \ar[l]^\mu& &} \]
where $p^{(e)}$ is the projection to the $e$-coordinate. This shows
$\mu \circ \sigma = p^{(e)} \circ i^{(e)} = \id_{\und{T}}$.

For the compatibility with the bimodule structure, we adapt the
classical proof, see for instance \cite[Proof of Lemma 2]{takeuchi}, to the equivariant setting: 
Note that the map $\chi \colon \und{T} \Box_{\und{R}} \und{T} \ra
\prod_H \und{T}$ is equivariant if we use the $H$-action on the right
factor of $\und{T}$ in $\und{T} \Box_{\und{R}} \und{T}$ and the action
on $\prod_H \und{T}$ on the indexing set by right multiplication. 
For
every $h \in H$ composing $i^{(e)}$ with the action by $h$ gives the
inclusion into the factor with label $h^{-1}$, $i^{h^{-1}}$. Therefore
the following diagram commutes:
\[ \xymatrix{
& \prod_H \und{T} \ar[r]^h & \prod_H \und{T} \ar[dr]^{p^{(e)}}& \\
\und{T} \ar[r]^\sigma \ar[ur]^{i^{(e)}}
    \ar@/^10ex/[urr]^{i^{(h^{-1})}} & \und{T}
    \Box_{\und{R}} \und{T} \ar[r]^{\id \Box h} \ar[u]^\chi& \und{T}
    \Box_{\und{R}} \und{T} \ar[u]^\chi \ar[r]^\mu & \und{T}.}\]
Here, we have used that the multiplication $\mu$ coincides with the
composite $p^{(e)} \circ \chi$. 

With $\delta_{h,e}$ denoting the Kronecker delta for $h$
and $e$ this yields:  
\begin{equation} \label{eq:takeuchi}
\mu \circ (\id \Box h) \circ \sigma = p^{(e)} \circ i^{(h^{-1})} =
\delta_{h,e}\cdot \id_\und{T}. 
\end{equation}
In order to avoid a blatant clash of notation we denote by $s_H$ what is usually called the norm or trace of the finite
group $H$: \begin{equation} \label{eq:trace} s_H = \sum_{h \in H} h. \end{equation} 
With \eqref{eq:takeuchi} we can rewrite the identity map on $\und{T}$
as follows: 

\begin{equation}  \label{eq:identity1}
  \id_\und{T} = \mu \circ (\id \Box s_H) \circ \sigma. 
\end{equation}
With a similar argument one shows
\begin{equation}  \label{eq:identity2}
  \id_\und{T} = \mu \circ (s_H \Box \id) \circ \sigma. 
\end{equation}

We construct two auxiliary maps from $\und{T}$ to $\und{T}
\Box_\und{R} \und{T}$. Let $\eta \colon \und{R} \ra \und{T}$ denote
the unit of $\und{T}$ and let $\nu_L \colon \und{T} \cong \und{R}
\Box_{\und{R}}\und{T}$ denote the monoidal isomorphism. We let
$\psi_L$ denote the composite  
\begin{equation} \label{eq:psil}
\xymatrix@1{\und{T} \ar[r]^(0.4)\cong_(0.4){\nu_L} & \und{R} \Box_{\und{R}}
  \und{T} \ar[r]^{\eta \Box \id} & \und{T} \Box_{\und{R}} \und{T}
  \ar[r]^(0.4){\sigma \Box \id} & \und{T} \Box_{\und{R}}\und{T}
  \Box_{\und{R}}\und{T} \ar[r]^(0.6){\id \Box \mu} & \und{T} \Box_{\und{R}} \und{T}}
\end{equation}
and dually, let $\psi_R$ be the composite
\begin{equation} \label{eq:psir}
\xymatrix@1{\und{T} \ar[r]^(0.4)\cong_(0.4){\nu_R} & \und{T} \Box_{\und{R}}
  \und{R} \ar[r]^{\id \Box \eta} & \und{T} \Box_{\und{R}} \und{T}
  \ar[r]^(0.4){\id \Box \sigma} & \und{T} \Box_{\und{R}}\und{T}
  \Box_{\und{R}}\und{T} \ar[r]^(0.6){\mu \Box \id} & \und{T} \Box_{\und{R}} \und{T}}
\end{equation}
In the following we abbreviate $\sigma \circ \eta$ by $\xi$ so that we
can write
\[ \psi_L = (\id \Box \mu) \circ (\xi \Box \id) \circ \nu_L \text{ and
  } \psi_R = (\mu \Box \id) \circ (\id \Box \xi) \circ \nu_R. \]

We claim that $\psi_L = \psi_R = \sigma$ which ensures that $\sigma$
is a bimodule map. In the following diagrams we omit the unit isomorphisms
$\nu_R$ and $\nu_L$, unadorned box products are over $\und{R}$, $\id_n$ denotes an identity map on $n$ box product factors and
$\mu_n$ denotes an iterated multiplation map.

For the claim that $\psi_L$ and $\psi_R$ agree we consider the
composite
\[ \theta = (\xymatrix@1{\und{T} \ar[rr]^{\xi \Box \id \Box \xi} & &
    \und{T}^{\Box 5} \ar[rr]^{\id \Box \mu_3 \Box \id} & &
    \und{T}^{\Box 3} \ar[rr]^{\id \Box s_H \Box \id} & & \und{T}^{\Box 3} \ar@/^4ex/[r]^{\mu \Box
      \id} \ar@/_4ex/[r]_{\id \Box \mu}& \und{T} \Box_{\und{R}}  \und{T}})\]
Note that for defining $\theta$ it is irrelevant whether we use $\mu \Box \id$ or $\id \Box \mu$ as the last map in the composite because the image of $s_H$ is contained in
$\und{R}$. We compare $\psi_R$ with the $\theta$
using the map $\mu \Box \id$ at
the end of the composition and leave the dual proof of comparing $\psi_L$ with the other variant to the diligent reader.

We embed the above composite $\theta$ as the top row of  the following enlarged diagram 
\[ \xymatrix{\und{T} \ar[rr]^{\xi \Box \id \Box \xi} \ar[d]_{\id \Box
    \xi} & &
    \und{T}^{\Box 5} \ar[rr]^{\id \Box \mu_3 \Box \id} \ar[d]_{\id_2
\Box \mu \Box \id}& &
    \und{T}^{\Box 3} \ar[rr]^{\id \Box s_H \Box \id} & & \und{T}^{\Box
      3} \ar[r]^{\mu \Box \id} & \und{T} \Box_{\und{R}} \und{T} \\
 \und{T} \Box_{\und{R}}  \und{T} \Box_{\und{R}}  \und{T} \ar[d]_{\mu \Box \id}& &\und{T}^{\Box 4} \ar[urr]^{\id \Box \mu \Box \id}  \ar[rr]^{\id \Box s_{\Delta H} \Box \id}&& \und{T}^{\Box 4} \ar[urr]^{\id \Box \mu \Box \id} \ar[rr]^{\mu \Box \id_2}&& \und{T}^{\Box 3} \ar[ur]^{\mu \Box \id} &\\
 \und{T}\Box_{\und{R}}  \und{T} \ar[rru]^{\xi \Box \id_2} \ar[rr]^{\eta \Box \id_2} \ar@/_20ex/[rrrrrrruu]_{\id_2}& & \und{T}^{\Box 3} \ar[u]^{\sigma \Box \id_2} \ar@/_3ex/[rrrru]_{\id_3}&&
 & && 
}\]
The pentagon on the left commutes because the maps don't interfere. The associativity of the multiplication ensures that the upper small triangle and the right-most parallelogram commute. The lower left triangle commutes  thanks to the definition of $\xi$.

In the diagram $s_{\Delta H}$ denotes the trace on $\und{T} \Box_{\und{R}} \und{T}$ that uses the diagonal $H$-action on both $\und{T}$-factors. As the $H$-action is multiplicative, we get that the parallelogram containing $s_H$ commutes. 

Due to \eqref{eq:identity1} we also get that the curved triangle commutes because only the terms in the trace that act via the unit element are left. The bottom part of the diagram commutes because $\eta$ is a unit for $\mu$. 

In total we get that the composite on the left which is  $\psi_R$ coincides with the composite $\theta$. As $\psi_L$ also coincides with $\theta$ we get that $\psi_R = \psi_L$. 

\bigskip
As $\psi_R$ uses the right unit isomorphism $\nu_R$ and then applies $\xi$ and a multiplication, it does not interfere when we multiply on the left and hence the diagram
\[ \xymatrix{\und{T} \Box_{\und{R}} \und{T} \ar[rr]^{\id \Box \psi_R} \ar[d]_{\mu}&& \und{T} \Box_{\und{R}} \und{T} \Box_{\und{R}} \und{T} \ar[d]^{\mu \Box \id} \\
 \und{T} \ar[rr]^{\psi_R}&& \und{T} \Box_{\und{R}} \und{T} }\]
commutes. Hence $\psi_R$ is a left $\und{T}$-module map. Dually, $\psi_L$ is a right $\und{T}$-module map. As $\psi_R = \psi_L$, this map is compatible with the $\und{T}$-bimodule structure.

Last but not least we show that $\psi_R = \sigma$ which then of course also implies $\psi_L = \sigma$. To that end consider the diagram
\[ \xymatrix{
\und{T} \ar[rrrr]^{i^{(e)}} \ar[d]_{\nu_R}& &&&  \prod_H \und{T} \ar[r]^{\chi^{-1}} & \und{T} \Box_{\und{R}} \und{T} \\ 
\und{T} \Box_{\und{R}}\und{R} \ar[rr]^{\id \Box \eta}&& \und{T} \Box_{\und{R}} \und{T}
\ar[rr]^{\id \Box i^{(e)}}&& \und{T} \Box_{\und{R}} \prod_H \und{T} \ar[u]_m \ar[r]^{\id \Box \chi^{-1}} & \und{T} \Box_{\und{R}} \und{T} \Box_{\und{R}} \und{T} \ar[u]_{\mu \Box \id}
  }\]
Here, $m$ is the left $\und{T}$-module structure on $\prod_H \und{T}$ that is given by the multiplication in $\und{T}$ in every factor. By direct inspection one sees that $m \circ (\id \Box \chi) = \chi \circ (\mu \otimes \id)$ and therefore the right square commutes and the left diagram commutes because $i^{(e)}$ only includes $\und{T}$ into the $e$-factor of the product and $\eta$ is the unit map of the multiplication. 

Therefore the top row describing $\sigma$ coincides with the other composite which is $\psi_R$. 
\end{proof}

We will now provide some examples of Galois extensions of Tambara functors.

\begin{prop}
  If $K \ra L$ is a $C_2$-Galois extension of fields, then $\und{K}^c
  \rightarrow \und{L}^\fix$ is a $C_2$-Galois extension of $C_2$-Tambara functors where we define the $C_2$-Galois action as the Weyl-action on $\und{L}^\fix$.  
\end{prop}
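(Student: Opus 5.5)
The plan is to check the three conditions of the definition of an $H$-Galois extension directly, with $H=C_2=\langle\tau\rangle$, where $\tau$ is the nontrivial element of $\mathrm{Gal}(L/K)$. Condition (3) will be reduced to the classical Galois property of $K\ra L$ at the free level, plus a computation of the box product at the level $C_2/C_2$.

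\emph{Action and monomorphism.} The action of $\tau$ on $\und{L}^\fix$ is given by $\tau$ on $\und{L}^\fix(C_2/e)=L$ and by the identity on $\und{L}^\fix(C_2/C_2)=L^{C_2}=K$. I first check that this is a map of Tambara functors. Restriction is the inclusion $K\subset L$, which is fixed by $\tau$. The transfer is $x\mapsto x+\tau x$ and the norm is $x\mapsto x\cdot\tau x$. Both are invariant under precomposition with $\tau$, which is all that is needed since $\tau$ acts trivially on the top level. The Weyl action commutes with $\tau$ because $C_2$ is abelian. The map $\und{K}^c\ra\und{L}^\fix$ is the inclusion $K\subset L$ at $C_2/e$ and the identity of $K$ at $C_2/C_2$. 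It is a Tambara map: the transfer $k\mapsto 2k$ and the norm $k\mapsto k^2$ of $\und{K}^c$ match $x+\tau x$ and $x\tau x$ on $K$. It is clearly injective and lands in the $\tau$-fixed part. This gives condition (1). For condition (2) I check levelwise that $L^{\langle\tau\rangle}=K$ and that $K^{\langle\tau\rangle}=K$.

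\emph{Condition (3).} At the free level, $(\und{L}^\fix\Box_{\und{K}^c}\und{L}^\fix)(C_2/e)=L\otimes_K L$. Here $\chi_e$ is the classical map $a\otimes b\mapsto(ab,a\tau(b))$, which is an isomorphism because $K\ra L$ is Galois. It is also equivariant for the diagonal Weyl action on the source and the Weyl action on each factor of $L\times L$.

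For the level $C_2/C_2$ I will use the normal basis theorem, which holds in any characteristic, so it also covers the Artin--Schreier case of characteristic $2$, where Corollary \ref{cor:boxfix} is unavailable. It gives $L\cong K[C_2]$ as a $K[C_2]$-module. Taking fixed points, $\und{L}^\fix\cong\und{K[C_2]}^\fix$ as $\und{K}^c$-modules. I then identify the latter with the free $\und{K}^c$-module on the free orbit, $\und{K}^c\Box\und{A}_{C_2/e}$, where $\und{A}_{C_2/e}$ is the Burnside Mackey functor represented by $C_2/e$. Concretely, $\und{K}^c\Box\und{A}_{C_2/e}(X)=\und{K}^c(C_2\times X)$, which gives $K\times K$ at $C_2/e$ and $K$ at $C_2/C_2$, with the evident structure maps.

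From this, for any $\und{K}^c$-module $\und{M}$,
\[ \und{M}\Box_{\und{K}^c}\und{L}^\fix\cong\und{M}\Box\und{A}_{C_2/e}, \qquad (\und{M}\Box\und{A}_{C_2/e})(C_2/C_2)=\und{M}(C_2/e). \]
Applied to $\und{M}=\und{L}^\fix$, the box product is $L$ at the top level. Tracing the isomorphism, its restriction map to the free level $L\otimes_K L$ should be injective with image $(L\otimes_K L)^{C_2}$ for the diagonal action. Since both $L$ and $K\times K$ are $2$-dimensional over $K$, this is consistent.

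With this in hand, $\chi_{C_2}$ is the map induced by $\chi_e$ on $C_2$-fixed points. This is because restriction in the target $\prod_{C_2}\und{L}^\fix$ is the inclusion of fixed points, and $\chi$ commutes with restriction. Since $\chi_e$ is an equivariant isomorphism, $\chi_{C_2}$ is an isomorphism too. Compatibility with transfers and norms is automatic, because $\chi$ is already a map of Tambara functors.

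\emph{Main obstacle.} The delicate step is identifying the top level of the relative box product, that is, showing that its restriction to $L\otimes_K L$ is injective with image exactly the invariants. This must hold even when $2$ is not invertible, and the example with $\Z[i]$ in Section \ref{sec:fixbox} shows that such an identification can fail in general. The first thing I would try is the normal basis and free-module argument above. As a fallback, I would compute with Strickland's description of box-product elements as $T_p(m\otimes t)$ using Frobenius reciprocity, as in the proof for constant Tambara functors. The point is that a normal basis element $\theta$ with $\tr(\theta)=\theta+\tau\theta=1$ exists whenever $L/K$ is Galois. Every top-level element can then be written as a transfer, which is what yields the invariants rather than a proper subgroup of them.
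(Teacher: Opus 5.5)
Your proof is correct, but for condition (3) it takes a different route from the paper.

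\textbf{The paper's route.} The paper splits by characteristic. For $\mathrm{char}\,K\neq 2$ it obtains $\und{L}^\fix\Box_{\und{K}^c}\und{L}^\fix\cong\und{(L\otimes_K L)}^\fix$ from Corollary \ref{cor:boxfix}. In characteristic $2$ it takes the top-level generators $1\otimes 1$ and $[\lambda\alpha\otimes\alpha]$ from \cite[Proposition 3.4]{lrz} and checks by hand that an explicit Artin--Schreier formula gives an isomorphism of Tambara functors. In both cases it then verifies on generators that $\chi_e$ is equivariant for the diagonal action, and uses that $(-)^\fix$ commutes with products.

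\textbf{Your route.} You use a normal basis to identify $\und{L}^\fix$ with the free $\und{K}^c$-module $\und{K}^c\Box\und{A}_{C_2/e}$. The relative box product then becomes $\und{L}^\fix(C_2/e\times -)$. The step you leave at ``should be'' follows at once from this. Restriction from the top level is restriction in $\und{L}^\fix$ along $C_2/e\times C_2/e\to C_2/e$. That map is a disjoint union of two isomorphisms of free orbits, so the restriction is injective. It always lands in the Weyl invariants, and your count $2=2$ (using $L\otimes_K L\cong L\times L$ equivariantly) forces the image to be all of $(L\otimes_K L)^{C_2}$. Your equivariance of $\chi_e$ is also a one-line consequence of $\tau$ commuting with itself, not a chase on generators.

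\textbf{What each buys.} Your argument is characteristic-free, needs no case distinction, and needs no external computation. As by-products it recovers Remark \ref{rem:char2}~(1) and shows that $\und{L}^\fix$ is free, hence projective and flat, over $\und{K}^c$; the paper only quotes this from \cite[Lemma 3.8]{lrz}. Your argument also seems to use nothing about $C_2$ beyond the Galois action commuting with the Weyl action. The paper's route gives explicit formulas for the top level of the box product. Away from characteristic $2$ it needs only the already established Corollary \ref{cor:boxfix}.
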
  

\begin{proof}
As $C_2$ is abelian, the $C_2$-Galois action commutes with the $C_2$-Weyl action of the $C_2$-Tambara functor.

We first consider the case where $K$ is a field of characteristic prime to $2$. 
We claim that the isomorphism of $C_2$-Tambara functors from Corollary
\ref{cor:boxfix} 
$\und{L}^\fix \Box_{\und{K}^c} \und{L}^\fix \cong \und{(L \otimes_K L)}^\fix$ can
be combined with the isomorphism of commutative rings $\chi \colon L \otimes_K L
\cong \prod_{C_2} L$ to induce an isomorphism of $C_2$-Tambara functors
\[ \und{L}^\fix \Box_{\und{K}^c} \und{L}^\fix \cong \prod_{C_2} \und{L}^\fix. \]
A priori the map $\chi$ is equivariant with respect to the $C_2$-action that
acts on the right hand tensor factor in $L \otimes_K L$ and acts on
the indexing set in $\prod_{C_2} L$. However, in $\und{(L \otimes_K
  L)}^\fix$ the $C_2$-Weyl 
action is diagonally on $L \otimes_K L$ and acts on the $L$ in $\prod_{C_2} L$.

We show that $\chi$ is also equivariant with respect to this action:
If $C_2 = \langle \tau | \tau^2 = e\rangle$, then $L = K(\alpha)$ with
$\alpha^2 = a \in K$ and $\tau(\alpha) = -\alpha$. We denote elements
in $\prod_{C_2} L$ by $(b,c)$ where $b$ is in the $1$-coordinate, and
$c$ is in the $\tau$-coordinate. The generator $1 \otimes 1$ is fixed
under $\tau$. The generators $1 \otimes \alpha$, $\alpha \otimes 1$
and $\alpha \otimes \alpha$ sit in commutative diagrams 
\begin{equation} \label{eq:charnot2}
  \xymatrix{1 \otimes \alpha \ar@{|->}[r]^\chi \ar@{|->}[d]_\tau&
    (\alpha,-\alpha) \ar@{|->}[d]^\tau\\ -1 \otimes \alpha
    \ar@{|->}[r]^\chi & (-\alpha, \alpha)} \xymatrix{\alpha \otimes 1
    \ar@{|->}[r]^\chi \ar@{|->}[d]_\tau& (\alpha,\alpha)
    \ar@{|->}[d]^\tau\\ -\alpha \otimes 1 \ar@{|->}[r]^\chi & (-\alpha,
    -\alpha)} \xymatrix{\alpha \otimes \alpha \ar@{|->}[r]^\chi
    \ar@{|->}[d]_\tau& (\alpha^2,-\alpha^2) \ar@{=}[r] & (a,-a)
    \ar@{|->}[d]^\tau\\ (-\alpha) \otimes (-\alpha) \ar@{|->}[r]^\chi &
    (\alpha^2, -\alpha^2) \ar@{=}[r] & (a,-a).}\end{equation} 

Therefore $\chi \colon L \otimes_K L \cong \prod_{C_2} L$ as commutative $C_2$-$K$-algebras with the above action and this shows
\[ \und{L}^\fix \Box_{\und{K}^c} \und{L}^\fix \cong \und{(L \otimes_K L)}^\fix \cong \und{\Big(\prod_{C_2} L\Big)}^\fix \cong \prod_{C_2} \und{L}^\fix.\]
The last isomorphism is due to the fact that $(-)^\fix$ is a right adjoint functor. 

The fixed point property $\Big(\und{L}^\fix\Big)^{C_2} \cong \und{K}^c$ is easy
to see, because $L^{C_2} =K$ and $K^{C_2} = K$. 

\bigskip
Assume now that the characteristic of $K$ is $2$. Then $L$ is an Artin-Schreier
extension and is of the form $L = K(\alpha)$ with $\alpha^2 + \alpha + a = 0$ for some $a \in K$ and $\tau(\alpha) = \alpha+1$. By \cite[Proposition 3.4]{lrz}
we know that generators in $\und{L}^\fix \Box_{\und{K}^c} \und{L}^\fix$ are of the form $1 \otimes 1 \in K \otimes_K K \cong K$ and $[\lambda \alpha \otimes \alpha]$ with $\lambda \in K$. We claim that also in this case we get an isomorphism
of $C_2$-Tambara functors
\begin{equation} \label{eq:boxchar2} \und{L}^\fix \Box_{\und{K}^c} \und{L}^\fix \cong \und{\Big(L \otimes_K L\Big)}^\fix.\end{equation}
Again, at the free level we take the identity map whereas at level $C_2/C_2$ we send $1 \otimes 1$ to $1 \otimes 1$ but
\[ [\lambda \alpha \otimes \alpha] \mapsto \lambda \alpha \otimes 1 + \lambda \otimes \alpha + \lambda \otimes 1. \]
We need to show that this is an isomorphism of commutative rings which together with the identity at the free level constitutes an isomorphism of $C_2$-Tambara functors. The $\lambda$-factor only contributes notational complexity. The product
\[ [\alpha \otimes \alpha]^2 = [\alpha \otimes \alpha \cdot \res\tr(\alpha \otimes \alpha)]\]
can be calculated to give $[\alpha \otimes \alpha] +a[\alpha \otimes 1] + a[1\otimes \alpha]$ but as $[\alpha \otimes 1]\sim 1 \otimes 1 \sim [1 \otimes \alpha]$ we are left with $[\alpha \otimes \alpha]$. In the target
\[ (\alpha \otimes 1 + 1 \otimes \alpha + 1 \otimes 1)^2 = \alpha \otimes 1 + 1 \otimes \alpha + 1 \otimes 1\]
thus the map respects the multiplication. A direct calculation shows that the map is also compatible with restriction, transfer and norm. 

As in the first case we have to show that the map $\chi \colon L
\otimes_K L \cong \prod_{C_2} L$ is also equivariant with respect to
the diagonal action in the source and the action on $L$ in the
target. Again, this is checked on generators. We present the most
complicated case and chase $\alpha \otimes \alpha$: 

\begin{equation} \label{eq:char2}
  \xymatrix{ \alpha \otimes \alpha \ar@{|->}[d]_\tau \ar@{|->}[r]^\chi & (\alpha^2, \alpha(\alpha+1)) \ar@{=}[r] & (\alpha+a,a) \ar@{|->}[d]^\tau \\
(\alpha +1) \otimes (\alpha + 1) \ar@{|->}[r]^\chi & ((\alpha+1)^2, (\alpha+1)\alpha) \ar@{=}[r] &  (\alpha^2+1,a)=(\alpha+a+1,a)
  }\end{equation}
The rest of the argument agrees with the one in the first case. 
\end{proof}

\begin{rem} \label{rem:char2}
  \begin{enumerate}
  \item[]
    \item
  Note that we proved in \eqref{eq:boxchar2} that
  \[ \und{L}^\fix \Box_{\und{K}^c} \und{L}^\fix \cong \und{\Big(L \otimes_K L\Big)}^\fix\] if $K \subset L$ is a $C_2$-Galois extension, even if $K$ has characteristic $2$, so in particular $|C_2|=2$ is \emph{not} invertible. 
\item 
If $K \subset L$ is an $H$-Galois extension of fields, then in general $\und{K}^c \ra \und{L}^\fix$ won't be an $H$-Galois extension of $H$-Tambara functors: If $H$ is not abelian, then the Galois action does not have to commute with the Weyl action despite the fact that they agree. Even if $H$ is abelian, the diagrams analogous to \eqref{eq:charnot2} and \eqref{eq:char2} won't necessarily commute.  
\end{enumerate}
\end{rem}  

The above examples of $C_2$-Galois extension of $C_2$-Tambara functors also shows that the map $s_H = \sum_{h \in H} h \colon \und{T} \ra \und{R}$ does not have to be surjective: 
\begin{prop}
  There are $H$-Galois extensions of $G$-Tambara functors $\und{R} \ra \und{T}$
  such that the map $s_H \colon \und{T} \ra \und{R}$ is not surjective at every level. 
\end{prop}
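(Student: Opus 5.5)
We use the example just established: for a $C_2$-Galois extension of fields $K \subset L$, the map $\und{K}^c \ra \und{L}^\fix$ is a $C_2$-Galois extension of $C_2$-Tambara functors. Here the Galois action is the Weyl action on $\und{L}^\fix$, so $H = G = C_2$. The free level will not give a counterexample, since $s_{C_2}$ there is the field trace $L \ra K$, which is surjective for a separable extension. We therefore look at the level $C_2/C_2$.

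At $C_2/C_2$ we have $\und{L}^\fix(C_2/C_2) = L^{C_2} = K$, and the Galois action, being the Weyl action, is trivial on this level. Hence
\[ s_{C_2} = \id + \tau \colon \und{L}^\fix(C_2/C_2) = K \ra \und{K}^c(C_2/C_2) = K \]
is multiplication by $2$. So it suffices to pick $K$ of characteristic $2$, for instance $K = \mathbb{F}_2$ and $L = \mathbb{F}_4$, which is an Artin--Schreier $C_2$-Galois extension covered by the characteristic $2$ case of the previous proposition. Then $s_{C_2}$ is the zero map $K \ra K$ at level $C_2/C_2$, and it is not surjective since $K \neq 0$.

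To finish, one records that the previous proposition applies, so $\und{\mathbb{F}_2}^c \ra \und{\mathbb{F}_4}^\fix$ is indeed $C_2$-Galois. One also checks that at the free level $s_{C_2}(\alpha) = \alpha + \alpha + 1 = 1$, so there the map is surjective. This shows that the failure occurs at some level but not at all levels, which is the intended reading of ``not surjective at every level''.

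The main subtlety is making sure the identification of the $H$-action at $C_2/C_2$ is correct. The Weyl action on fixed-point levels $\und{L}^\fix(C_2/C_2) = L^{C_2}$ is trivial by definition, so $s_H$ really is multiplication by $|H|$ there. Everything else is immediate.
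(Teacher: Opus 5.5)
Your proposal is correct and is essentially the paper's proof: take a characteristic-$2$ $C_2$-Galois extension $K \subset L$, use the previously established $C_2$-Galois extension $\und{K}^c \ra \und{L}^\fix$, and observe that at level $C_2/C_2$ the map $s_H = e + \tau$ acts on $L^{C_2} = K$ as multiplication by $2$, hence as zero. Your concrete instance $\mathbb{F}_2 \subset \mathbb{F}_4$ and your check that $s_H$ is surjective at the free level are harmless additions that the paper does not spell out.
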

\begin{proof}
  Consider a $C_2$-Galois extension of fields $K \ra L$ of
  characteristic $2$ and the corresponding $C_2$-Galois extension of
  $C_2$-Tambara functors $\und{K}^c \ra \und{L}^\fix$.  At level
  $C_2/C_2$ the map $s_H$ is
  \[ (e + \tau) \colon \und{L}^\fix (C_2/C_2) = L^{C_2} = K \ra \und{K}^c(C_2/C_2) = K\]
  and hence it sends any $x \in K$ to $2x=0$. 
\end{proof}

\begin{rem}
  In the Galois theory of commutative rings the surjectivity of the trace $s_H \colon T \ra R$ is an important means to ensure the projectivity and faithful flatness of $T$ as an $R$-module. If $\und{R} \ra \und{T}$ is an $H$-Galois extension of $G$-Tambara functors, then $\und{R}(G/e) \ra \und{T}(G/e)$ is an $H$-Galois extension of commutative rings and therefore we know that $s_H \colon \und{T}(G/e) \ra \und{R}(G/e)$ is surjective, $\und{T}(G/e)$ is projective and faithfully flat as an $\und{R}(G/e)$-module.

  For $\und{K}^c \ra \und{L}^\fix$ coming from a $C_2$-Galois extension $K \subset L$ we showed in \cite[Lemma 3.8]{lrz} by different methods that $\und{L}^\fix$ is projective over $\und{K}^c$.  We don't have a general argument for the projectivity of  Galois extensions of Tambara functors. 

\end{rem}

In analogy with the corresponding notion for modules over ring spectra
\cite[Definition 4.3.1]{rognes} we define faithfulness for modules over a
Tambara functor as follows:

\begin{defn}
Let $\und{R}$ be a $G$-Tambara functor and let $\und{M}$ be an
$\und{R}$-module. Then $\und{M}$ is \emph{faithful}, if for all
$\und{R}$-modules $\und{N}$ the triviality of $\und{N} \Box_\und{R}
\und{M}$ implies the triviality of $\und{N}$.
\end{defn}  
Note that this clashes with the classical notion of faithfulness in
the category of modules over a commutative ring where one requires the
annihilator of the module to be trivial.

If the group order $|H|$ is invertible in $\und{R}$, then we do get faithfulness with the help of the usual proof (see \cite[Lemma 6.4.3]{rognes}):

\begin{prop}
Assume that $\und{R} \ra \und{T}$ is an $H$-Galois extension of
$G$-Tambara functors and assume that $|H|$ is invertible in $\und{R}$,
then $\und{T}$ is a faithful $\und{R}$-module. 
\end{prop}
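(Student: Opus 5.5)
The plan is to follow the classical argument (as in \cite[Lemma 6.4.3]{rognes}): show that $\und{R}$ is a retract of $\und{T}$ as an $\und{R}$-module. Faithfulness then follows formally. If $\und{N} \Box_\und{R} \und{T} = 0$, then $\und{N} \cong \und{N} \Box_\und{R} \und{R}$ is a retract of $\und{N} \Box_\und{R} \und{T} = 0$, hence trivial.

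\textbf{Constructing the retraction.} Let $\eta \colon \und{R} \ra \und{T}$ be the unit. By condition (2) of the definition, $\eta$ identifies $\und{R}$ with $\und{T}^H$. The trace $s_H = \sum_{h\in H} h \colon \und{T} \ra \und{T}$ is a morphism of Mackey functors, since each $h$ acts through maps of $G$-Tambara functors and sums of Mackey maps are Mackey maps. Its image lies levelwise in $\und{T}(G/K)^H$, so $s_H$ factors through $\und{T}^H \cong \und{R}$.

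We check that $s_H$ is $\und{R}$-linear. The module structure is given levelwise by multiplication, and for $r \in \und{R}(G/K)$ and $t \in \und{T}(G/K)$ we have $h(\eta(r)t) = \eta(r)h(t)$ because $H$ acts by $\und{R}$-algebra maps. So $s_H(\eta(r)t) = \eta(r)s_H(t)$. By the universal property of the box product, levelwise compatibility with the multiplication and with the structure maps is enough for the map to be an $\und{R}$-module map.

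\textbf{The retraction property.} Compute $s_H \circ \eta = |H| \cdot \id_\und{R}$, since $H$ fixes the image of $\eta$. Because $|H|$ is invertible in $\und{R}$, the map $\rho = \frac{1}{|H|} s_H \colon \und{T} \ra \und{R}$ is an $\und{R}$-module map with $\rho \circ \eta = \id_\und{R}$. Here multiplication by $\frac{1}{|H|}$ is multiplication by an element of $\und{R}(G/G)$, restricted to each level, so it is $\und{R}$-linear.

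\textbf{Applying $\und{N} \Box_\und{R} -$.} This is a functor, so $\id \Box \rho$ and $\id \Box \eta$ exhibit $\und{N} \cong \und{N}\Box_\und{R}\und{R}$ as a retract of $\und{N}\Box_\und{R}\und{T}$, which gives the claim. A retract of the zero Mackey functor is zero at every level.

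\textbf{Main obstacle.} The only point needing care is making precise what ``$|H|$ invertible in $\und{R}$'' means and that $\rho$ is a genuine $\und{R}$-module map. I would read the hypothesis as saying that $|H|\cdot 1$ is a unit in $\und{R}(G/G)$. Restricting it to each level then makes $|H|$ a unit in every $\und{R}(G/K)$, compatibly with the restrictions and, by Frobenius reciprocity, with the transfers. With that reading the remaining steps are formal.
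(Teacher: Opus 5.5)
Your proposal is correct and is essentially the paper's argument. The paper also observes that $s_H\circ\eta$ is multiplication by $|H|$, so that $\frac{1}{|H|}s_H$ splits $\und{R}$ off $\und{T}$ as an $\und{R}$-module, and then concludes that $\und{N}\cong\und{N}\Box_{\und{R}}\und{R}$ is a retract of $\und{N}\Box_{\und{R}}\und{T}=0$. Your additional checks fill in details the paper leaves implicit: that $s_H$ is a Mackey map landing in $\und{T}^H\cong\und{R}$, that it is $\und{R}$-linear, and that multiplication by $\frac{1}{|H|}$ is a module map.
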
  

\begin{proof}
Consider the composite $\xymatrix@1{\und{R} \ar[r]^\eta & \und{T}
  \ar[r]^{s_H} & \und{R}}$ which sends an $x \in \und{R}(G/K)$ to
$|H|x$. As $|H|$ is invertible, this shows that $\und{R}$ splits off
$\und{T}$ via maps of $\und{R}$-modules. Assume that for an $\und{R}$-module $\und{M}$ we get
$\und{M} \Box_{\und{R}} \und{T} \cong 0$. Then $\und{M} \cong
\und{M}\Box_{\und{R}} \und{R}$ splits of $0$, and hence $\und{M} \cong
0$ and $\und{T}$ is faithful as an $\und{R}$-module. 
\end{proof}

\begin{ex}
If $R \ra T$ is an $H$-Galois extension of commutative rings, then
$\und{R}^c \ra \und{T}^c$ is faithful and flat and the trace map $s_H
\colon \und{T}^c \ra \und{R}^c$ is surjective. As the effect of the box product
$\und{N} \mapsto \und{N} \Box_{\und{R}^c} \und{T}^c$ is controlled by
\eqref{eq:tensor-constant}, flatness of $\und{T}^c$ over $\und{R}^c$
follows directly. As the trace map $s_H \colon T \ra R$ is surjective
map, we get surjectivity of the trace map at every level. This insures
faithfulness as in the proof above.  
\end{ex}

\begin{thm} Assume that $G$ and $H$ are finite groups and that $|G|$ is prime to the characteristic of a field $k$. 
  If $j \colon k \ra L$ is a $G \times H$-Galois extension of fields and $K = L^H$, then the $H$-Galois extension 
  $\phi \colon K \ra L$  induces an $H$-Galois extension of $G$-Tambara functors
  $\varphi \colon \und{K}^\fix \ra \und{L}^\fix$ with $\varphi_e = \phi$.   
\end{thm}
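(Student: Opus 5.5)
We use the $G$-action on $L$ (via $G\times H$) to form the fixed point Tambara functors $\und{K}^\fix$ and $\und{L}^\fix$. Since $G \times H$ is a direct product, the $G$- and $H$-actions on $L$ commute. In particular $G$ preserves $K = L^H$, so $K$ is a commutative $G$-ring and $\und{K}^\fix$ makes sense. The inclusion $\phi\colon K \ra L$ is $G$-equivariant and induces levelwise inclusions $K^{J} \subset L^{J}$ for $J < G$. These give a monomorphism of $G$-Tambara functors $\varphi$ with $\varphi_e = \phi$, because restrictions, transfers and norms in fixed point functors are built from inclusions, sums and products over $G$-orbits.

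We let $H$ act on $\und{L}^\fix$ levelwise. Since $H$ commutes with $G$, it preserves each $L^J$ and commutes with restrictions (inclusions), transfers $\sum_{g} g$ and norms $\prod_g g$, so $H$ acts through $\und{K}^\fix$-algebra maps of $G$-Tambara functors. The fixed point condition is immediate:
\[ (L^J)^H = L^{J\times H} = (L^H)^J = K^J. \]

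The main step is the isomorphism $\chi$. Since $|G|$ is prime to $\mathrm{char}(k)$, $|G|$ is invertible in $K$ and $L$. The proof of Corollary~\ref{cor:boxfix}, carried out relative to $\und{K}^\fix$, should give
\[ \und{L}^\fix \Box_{\und{K}^\fix} \und{L}^\fix \cong \und{(L \otimes_K L)}^\fix \]
with diagonal $G$-action. Concretely, the lemma gives $\und{L}^\fix\Box \und{L}^\fix \cong \und{(L\otimes L)}^\fix$ and $\und{L}^\fix \Box \und{K}^\fix \Box \und{L}^\fix \cong \und{(L\otimes K\otimes L)}^\fix$. The relative box product is the coequalizer of the two action maps. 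Since $(-)^\fix$ is exact on $\Z[1/|G|][G]$-modules (fixed points are a natural direct summand via the averaging idempotent), the coequalizer of fixed point functors is the fixed point functor of the coequalizer $L\otimes_K L$.

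It remains to check that $\chi_e\colon L\otimes_K L \ra \prod_H L$, $x\otimes y\mapsto (x\,h(y))_h$, is $G$-equivariant for the diagonal action on the source and the action on each factor of the target. This holds because
\[ g(x)\,h(g(y)) = g(x)\,g(h(y)) = g\bigl(x\,h(y)\bigr), \]
using again that $g$ and $h$ commute. This is exactly the point that failed in Remark~\ref{rem:char2}(2), and the product decomposition $G\times H$ is what rescues it. Then $\chi_e$ is an isomorphism because $K\subset L$ is $H$-Galois for fields. Since $(-)^\fix$ is a functor and a right adjoint, it preserves products, so we obtain
\[ \und{(L\otimes_K L)}^\fix \cong \und{\Bigl(\prod_H L\Bigr)}^\fix \cong \prod_H \und{L}^\fix. \]

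Finally we verify that the composite of these isomorphisms is the map $\chi$ from the definition. At the free level this is clear. At level $G/J$ the comparison map is the inclusion $L^J\otimes_{K^J}L^J \ra (L\otimes_K L)^J$, as computed in Corollary~\ref{cor:boxfix}, and composing with $\chi_e$ gives $(x\,h(y))_h$, which is $\chi$.

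The main obstacle is the relative version of Corollary~\ref{cor:boxfix} over the non-constant base $\und{K}^\fix$. I would settle it first via the exactness argument above, or alternatively via the orbit functor $\und{L(-)}$, using $|G|$ invertible to identify $\und{L(M)}\cong\und{M}^\fix$ and Lewis' formula to commute past the coequalizer.
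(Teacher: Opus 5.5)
Your proposal is correct and follows essentially the same route as the paper: the commuting $G$- and $H$-actions give the fixed point condition, the lemma together with commuting $(-)^\fix$ past the coequalizer gives $\und{L}^\fix \Box_{\und{K}^\fix} \und{L}^\fix \cong \und{(L\otimes_K L)}^\fix$, and $\chi_e$ plus preservation of products by the right adjoint $(-)^\fix$ finishes the argument. You also make explicit points the paper leaves implicit: the $G$-equivariance of $\chi_e$ for the diagonal action, which is where the direct product $G\times H$ is really used, and the averaging-idempotent reason why $(-)^\fix$ commutes with the coequalizer. Your level-$G/J$ identification of the composite with $\chi$ is slightly loose, since the box product at $G/J$ also contains transfers, but comparing at $G/e$ suffices because maps into a fixed point Mackey functor are determined at the free level.
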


\begin{proof}
  As the $G$ and $H$ action on $L$ commute, the $H$ action on $L$ gives a
  well-defined action on the $G$-Tambara functor $\und{L}^\fix$. At every level
  $G/U$ we get
  \[ \und{L}(G/U)^H = (L^U)^H \cong (L^H)^U = K^U = \und{K}^\fix(G/U).\]
Thus we only have to check the unramified condition, thus we consider
\[ \chi \colon \und{L}^\fix \Box_{\und{K}^\fix} \und{L}^\fix \ra \prod_H \und{L}^\fix. \]
As $|G|$ is invertible in $k$ (and hence in $K$ and $L$) we know that 
$\und{L}^\fix \Box\und{L}^\fix \cong \und{(L \otimes L)}^\fix$ and 
$\und{L}^\fix \Box \und{K}^\fix \Box \und{L}^\fix \cong \und{(L \otimes K \otimes L)}^\fix$. We showed in the proof of \cite[Proposition 4.1]{lr} that taking the 
fixed point Tambara functor commutes with coequalizers and therefore
\[ \und{L}^\fix \Box_{\und{K}^\fix} \und{L}^\fix \cong \und{(L \otimes_K L)}^\fix.\] 
But as $K \subset L$ is an $H$-Galois extension the map of commutative rings
$\chi \colon L \otimes_K L \ra \prod_H L$, $\chi(a \otimes b) = (a\cdot h(b))_{h \in H}$ is an isomorphism. Thus we obtain
\[ \und{(L \otimes_K L)}^\fix \cong \und{\Big(\prod_H L\Big)}^\fix. \]
As the functor that takes an abelian group with a  $G$-action to its fixed
point $G$-Mackey functor is right adjoint to the functor that evaluates a
$G$-Mackey functor at the free level, it commutes with products, and therefore
we obtain that $h$ is an isomorphism of $G$-Tambara functors 
\[ \und{L}^\fix \Box_{\und{K}^\fix} \und{L}^\fix \cong \prod_H \und{L}^\fix. \]
\end{proof}

\begin{ex}
  The $C_4$-Galois extension of fields
  $\Q(\sqrt{2}) \hookrightarrow \Q(\sqrt{2}, \zeta_5)$ induces a $C_4$-Galois
  extension of $C_2$-Tambara functors $\und{\Q(\sqrt{2})}^\fix \ra \und{\Q(\sqrt{2}, \zeta_5)}^\fix$. 
  \end{ex}



\section{No naive Tambara analogue of the Gaussian integers} \label{sec:nogauss}
The Gaussian integers $\Z[i]$ provide an important example of a
ramified extension. The conjugation action has $\Z$ as fixed points
but the extension $\Z \ra \Z[i]$ is \emph{not} a $C_2$-Galois
extension of commutative rings because the map $\xi \colon \Z[i]
\otimes \Z[i] \ra \prod_{C_2} \Z[i]$ is not surjective. But the
extension $\Z \ra \Z[i]$ is only ramified at $2$ and
\[ \Z[\frac{1}{2}] \ra \Z[\frac{1}{2}][i]\]
\emph{is} a $C_2$-Galois extension of commutative rings. This works in
broader generality: for a Galois extension of number fields, inverting
all the ramified primes yields a Galois extension
of the corresponding rings of integers \cite[Theorem 4.1]{greither}. 

If one wants to mimic the construction of the Gaussian integers $\Z[i]
= \Z[x]/x^2+1$ in the setting of $C_2$-Tambara functors, then one
would rewrite them as a pushout 
\[ \Z[x]/x^2+1 = \Z[x] \otimes_{\Z[y]} \Z \]
where $y$ is mappd to $x^2 \in \Z[x]$ and to $-1$ in $\Z$. 

Blumberg and Hill provide very helpful explicit formulas for free
$C_2$-Tambara functors in \cite[\S 3]{bh-right}. Adjoining a free
generator at the trivial level $C_2/C_2$, $x_{C_2/C_2}$, to the Burnside Tambara
functor for $C_2$ yields 
\[ \und{A}[x_{C_2/C_2}]: 
  \xymatrix{\Z[t]/t^2-2t[x_*,n_*]/tn_* = tx_*^2 \ar@/_3ex/[d]_\res\\
    \Z[x] \ar@<0.4ex>@/_3ex/[u]_{\tr} \ar@<-0.4cm>@/_3ex/[u]_{\norm}}
\] 
with $\res(x_*) = x$, $\res(n_*) = x^2$, $\tr(p(x)) = tp(x_*)$ for any
$p(x) \in \Z[x]$ and the norm is determined by the norm in the
Burnside Tambara functor, by $\norm(x) = n_*$ and by an explicit
inductive formular for polynomials of higher degree \cite[Proposition 3.5]{bh-right}.

A -- probably too naive -- attempt to build a Tambara version of the
Gaussian integers is to form a
relative box product
\[ \und{G} = \und{A}[x_{C_2/C_2}] \Box_{\und{A}[y_{C_2/C_2}]}
  \und{A}\]
where at level $C_2/e$ we send $y$ to $x^2$ in
$\und{A}[x_{C_2/C_2}](C_2/e) = \Z[x]$
and to $-1 \in \und{A}(C_2/e) = \Z$. We then obtain $\Z[x]/x^2+1 =
\Z[i]$ at the free level.
At both levels we have (quotients of) polynomial algebras, so we can
talk about the degrees of homogeneous elements. 

As in the non-equivariant case we assume the following about the maps
in the pushout diagram
\[ \xymatrix@1{ \und{A}[x_{C_2/C_2}] & \und{A}[y_{C_2/C_2}]\ar[r]^(0.6)\psi
    \ar[l]_\varphi &\und{A}:       } \]
\begin{enumerate}
\item
  The $C_2$-action is $\und{A}$-linear and is trivial on
  $\und{A}[y_{C_2/C_2}]$.
\item
The map  $\varphi$ is a morphism of Tambara functors that doubles the
degree.
\item
  The map $\psi$ is a morphism of Tambara functors. 
\end{enumerate}

However, we show the following no-go result:

\begin{prop}
There is no way to define a
$C_2$-action on $\und{G}$ which satisfies the above conditions (1),
(2) and (3), such that $\und{A} \cong \und{G}^{C_2}$. 

In particular, inverting $2$ does \emph{not} yield a $C_2$-Galois
extension $\und{A}[\frac{1}{2}] \ra \und{G}[\frac{1}{2}]$. 
\end{prop}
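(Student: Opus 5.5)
We argue by contradiction. Suppose a $C_2$-action on $\und{G}$ satisfying (1)--(3) with $\und{G}^{C_2} \cong \und{A}$ exists. The plan is to compute both levels of $\und{G}$ explicitly and show that the $C_2/C_2$-level of the fixed points is strictly bigger than $\und{A}(C_2/C_2) = \Z[t]/(t^2-2t)$.

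First we compute $\und{G}$ from the Blumberg--Hill formulas. Let $\varphi$ send $y \mapsto x^2$ at the free level. Since $\varphi$ is a Tambara map that doubles degree, at the trivial level it must send $y_* \mapsto x_*^2$ up to terms of degree $2$. The natural choice is $y_* \mapsto x_*^2$; the alternatives are $n_*$ or combinations involving $t$. The element $n_{y} = \norm(y)$ goes to $\norm(x^2) = n_*^2$. Under $\psi$, $y_* \mapsto \psi(y_*)$, which must restrict to $-1$, so it is $-1 + c(t-2)$ for some integer $c$ (since $\res$ kills $t-2$); also $\norm(y)\mapsto \norm(-1) = 1 - t$ in the Burnside ring of $C_2$.

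The relative box product is the coequalizer computed levelwise together with the Frobenius identifications. At $C_2/C_2$ we therefore obtain a quotient of $\Z[t][x_*,n_*]/(t^2-2t, tn_* - tx_*^2)$ by the relations $x_*^2 = \psi(y_*)$ and $n_*^2 = 1-t$, plus transfer images $\tr(x^k)$. The resulting degree-graded description is explicit.

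Next we use conditions (1) and (3). The $C_2$-action is $\und{A}$-linear and fixes the image of $\und{A}[y_{C_2/C_2}]$. At the free level it is therefore a ring automorphism of $\Z[i]$ fixing $\Z$: either the identity or conjugation. The identity has fixed points $\Z[i] \neq \Z$, so the action must be conjugation, $x \mapsto -x$. Because the action commutes with restriction, and restriction is injective on the relevant odd-degree parts, we get $x_* \mapsto -x_*$, while $n_*$ maps to an element restricting to $x^2$, which forces $n_* \mapsto n_*$ modulo $t$-torsion. The key step is then to exhibit a nonzero fixed element of $\und{G}(C_2/C_2)$ not in the image of $\und{A}(C_2/C_2)$. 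The candidate is $n_*$, or $\tr(x)=tx_*$ (fixed up to sign, possibly $2$-torsion), and $n_*$ is the first thing to try. It is fixed, it restricts to $x^2=-1$, and it satisfies $n_*^2 = 1-t$. The element of $\und{A}$ restricting to $-1$ has the form $-1+c(t-2)$, and squaring gives $1 + (c^2\cdot 2 - 2c \cdot ... )t$. We then check that no integer $c$ makes the square equal to $1-t$, since the coefficient of $t$ comes out even. This parity argument is the main obstacle: we must verify carefully, via the Blumberg--Hill normal form, that $n_*$ is not identified with an element of $\Z[t]/(t^2-2t)$ in the quotient. The approach is to produce a well-defined map from $\und{G}(C_2/C_2)$ to a computable ring, for example by sending $t\mapsto 0$, $x_*\mapsto x$ and reducing mod $2$, under which $n_*$ is not in the image of $\und{A}$.

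Finally, the statement about inverting $2$ follows. Localization is exact and commutes with fixed points, so $\und{G}[\frac12]^{C_2} = \und{G}^{C_2}[\frac12]$. We then check that $n_*$ survives the localization: $t$ becomes an idempotent-type element $t/2$, and $n_*$ still lies outside the image of $\und{A}[\frac12]$, as seen through the component where $t=0$ and the ghost-type map. This violates condition (2) of a Galois extension.
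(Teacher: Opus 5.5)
There is a genuine gap. Your overall strategy (exhibit a $\tau$-fixed element of $\und{G}(C_2/C_2)$ outside the image of $\und{A}$) is the paper's, but the witness you pick, $n_*$, is not fixed.

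Since $\tau$ commutes with norms and norms are multiplicative, $\tau(n_*)=\norm(\tau(x))=\norm(-x)=\norm(-1)\,n_*$. In the Burnside ring $\norm(-1)=t-1$, not $1-t$; indeed $\res(1-t)=-1\neq(-1)^2$. The same sign error affects your relation, which should read $n_*^2=\norm(x^2)=\norm(-1)=t-1$.

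In $\und{G}$ one also has $tn_*=\tr(\res(n_*))=\tr(-1)=-t$, hence $\tau(n_*)=-t-n_*$. With the natural choice $\varphi(y_*)=x_*^2$, $\psi(y_*)=-1$, the quotient of $\und{G}(C_2/C_2)$ by the image of the transfer is the torsion-free ring $\Z[x_*,n_*]/(x_*^2+1,n_*^2+1)$. There $\tau(n_*)$ becomes $-n_*\neq n_*$. So your parity computation correctly shows $n_*\notin\und{A}(C_2/C_2)$, but it says nothing about $\und{G}^{C_2}$.

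In fact no even-degree witness exists. One checks that $\und{G}(C_2/C_2)$ is free over $\Z[x_*]/(x_*^2+1)$ on $1,t,n_*$. The $\tau$-fixed part of the even summand $\Z\{1,t,n_*\}$ is exactly $\Z\{1,t\}$. Note also that ``fixed modulo $t$-torsion'' is not fixed: $\tau(n_*)-n_*=(t-2)n_*$ is killed by $t$ but is nonzero.

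The step before that is also unjustified. Restriction is not injective in odd degree, since $\res((t-2)x_*)=0$, so you cannot conclude $\tau(x_*)=-x_*$. Write $\tau(x_*)=(a+bt)x_*$. Compatibility with $\tr$ and $\res$ gives only $a+2b=-1$, and $\tau^2=\id$ gives $(a+bt)^2=1$. The paper uses $a=1$, $b=-1$, i.e.\ $\tau(x_*)=(1-t)x_*$; the choice $a=-1$, $b=0$ also satisfies these constraints.

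The missing idea is to look in odd degree, inside the kernel of restriction. From $(1-t)^2=1$ and $(t-2)(1-t)=t-2$, the elements $(t-2)x_*^{2n+1}$ are fixed when $\tau(x_*)=(1-t)x_*$. Similarly $(t-2)n_*x_*$ is fixed when $\tau(x_*)=-x_*$.

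Because $\varphi$ doubles degree and $\psi$ lands in degree $0$, $\und{G}$ keeps a $\Z/2$-grading with the image of $\und{A}$ in even degree. So these nonzero, non-torsion odd elements lie in $\und{G}^{C_2}$ but not in $\und{A}$. Being non-torsion, they also survive inverting $2$, which gives the second claim. This is the paper's argument.
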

\begin{proof}
As we define the $C_2$-action on the free level as
\[ \tau(x) = -x\]
this implies
\[ \tau(n_*) = \tau(\norm(x)) = \norm(-x) = \norm(-1)n_* = (t-1)n_*.  \]
Note that this already implies that elements of the form $\beta tn_*$
are fixed by $\tau$.

\medskip 
We claim that we have no choice but to define $\tau(x_*)$ as
$(1-t)x_*$: As we assume that the $C_2$-action is degree-preserving,
we know that
\[ \tau(x_*) = (a+bt)x_* \text{ with } a,b \in \Z.\]
As $\tau$ has to act via morphisms of Tambara functors we get 
\[ \tau(tx_*) = \tau(\tr(x)) = \tr(-x) = -tx_* \]
and this results in the condition 
\begin{equation} \label{eq:x*} a + 2b = -1\end{equation}
The equality
\[ -x = \tau(x) = \tau(\res(x_*)) = \res\tau(x_*) = \res((a+bt)x_*) =
  (a +2b)x\]
yields the same condition.

Using $\tau^2(x_*) = x_*$ further implies that
\[a^2 + (2ab+2b^2)t = 1\] 
and by comparing this with \eqref{eq:x*} we get $b=-1$ and $a=1$,
hence
\begin{equation} \label{eq:taux}
\tau(x_*) = (1-t)x_*. 
\end{equation}  
Hence the $C_2$-action is completely determined by condition (1). 

It is straightforward to check that elements of the form $(-2d+dt)x_*^{2n+1}$
are fixed points for all $d \in \Z$ and all $n \geq 0$, because $(-2d+dt)(1-t) =
-2d+2dt+dt-2dt = -2d+dt$, so we have fixed points in all odd degrees.

As we assume that $\varphi$ doubles the degree, these fixed points are
not in the image of $\varphi_{C_2}$ and hence they give non-trivial
fixed points in $\und{G}^{C_2}$, proving that $\und{A}$ is properly contained in $\und{G}^{C_2}$. 




\end{proof}

\section{Nullstellensatzian Tambara functors and Galois extensions} \label{sec:nullstellensatzian}
For Tambara funtors there is a notion of being field-like due to
Nakaoka \cite{nakaoka}. Field-like $G$-Tambara functors $\und{T} \neq 0$ can be
characterized  as those $G$-Tambara functors $\und{T}$ for which $\und{T}(G/e)$
has no non-trivial $G$-invariant ideals and that have injective
restriction maps associated to equivariant maps of finite transitive
$G$-sets and \cite[Theorem 4.32]{nakaoka}.  Typical examples arise
from  $G$-Galois extensions $K \subset L$ of fields. In this case the
fixed point $G$-Tambara functor $\und{L}^\fix$ is field-like. Wisdom
shows \cite[Corollary B]{wisdom-clarified} that every field-like
$G$-Tambara functor $\und{k}$ is of the form $\coind_H^G(\und{\ell})$
for some field-like $H$-Tambara functor $\und{\ell}$ such that
$\und{\ell}(H/e)$ is a field. Here, $\coind_H^G$ denotes the co-induction functor $\coind_H^G \colon H\tamb \ra G\tamb$. This functor is right adjoint to the restriction functor $i_H^G\colon G\tamb \ra H\tamb$.

Schuchardt, Spitz, and Wisdom investigate which Tambara functors behave like
being algebraically closed. They do this using the concept of
\emph{Nullstellensatzian objects}, introduced in \cite{bsy} and show
that Nullstellensatzian $G$-Tambara functors $\und{k}$ are isomorphic
to the fixed point Tambara functor $\und{\und{k}(G/e)}^\fix$ and can
be expressed as the co-induction of an algebraically closed field
$\mathbb{F}$, $\und{k} \cong \coind_e^G(\mathbb{F})$ \cite[\S
5]{ssw}. So it is a natural question to ask how co-induction from the
trivial group to some finite group $G$ interacts
with Galois extensions:  
\begin{prop}
    If $R \ra T$ is an $H$-Galois extension of commutative rings, then
  $\coind_e^G(R) \ra \coind_e^G(T)$ is an $H$-Galois extension of
  $G$-Tambara functors for any finite group $G$.
\end{prop}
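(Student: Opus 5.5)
The plan is to work entirely with the functor $\coind_e^G$ and its formal properties, and to reduce the three Galois conditions to the corresponding conditions for $R \ra T$. Recall that, on underlying Mackey functors, $\coind_e^G$ agrees with induction $\mathrm{Ind}_e^G$, since for Mackey functors induction and coinduction from a subgroup coincide. Concretely, $\coind_e^G(T)$ is the fixed point Tambara functor of the $G$-ring $\prod_G T$ with $G$ permuting the factors.

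\emph{Conditions (1) and (2).} Let $H$ act on $\coind_e^G(T)$ by applying the functor $\coind_e^G$ to each $h \in H$. These are maps of $G$-Tambara functors and of $\coind_e^G(R)$-algebras, because $H$ fixes $R$. The map $\coind_e^G(R) \ra \coind_e^G(T)$ is a monomorphism: $\coind_e^G$ is a right adjoint and so preserves monomorphisms, or one can see this directly levelwise, since the levels are products of copies of $R \ra T$. For the fixed points, $\coind_e^G(T)^H$ is the equalizer of the maps $h$ and $\id$ for $h \in H$. Right adjoints preserve limits, and limits of Tambara functors are computed levelwise. Hence $\coind_e^G(T)^H \cong \coind_e^G(T^H) \cong \coind_e^G(R)$.

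\emph{Condition (3).} This is the main step. We cannot use Corollary \ref{cor:boxfix}, since $|G|$ need not be invertible. Instead we use the projection formula for Mackey functors,
\[ \mathrm{Ind}_e^G(M) \Box \und{N} \cong \mathrm{Ind}_e^G(M \otimes i_e^G\und{N}), \]
which is natural in both variables and compatible with module structures. Write the relative box product as the coequalizer of
\[ \coind_e^G(T) \Box \coind_e^G(R) \Box \coind_e^G(T) \rightrightarrows \coind_e^G(T) \Box \coind_e^G(T). \]
Applying the projection formula in the first variable turns this into $\mathrm{Ind}_e^G$ applied to the coequalizer diagram computing $T \otimes_{\prod_G R} \prod_G T$. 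Here $\prod_G R = i_e^G\coind_e^G(R)$ acts on $T$ through the projection to the $e$-coordinate, and $\mathrm{Ind}_e^G$ is exact, so it commutes with the coequalizer. The idempotents of $\prod_G R$ kill every coordinate except the $e$-coordinate, so
\[ T \otimes_{\prod_G R} \prod_G T \cong T \otimes_R T. \]
This gives
\[ \coind_e^G(T) \Box_{\coind_e^G(R)} \coind_e^G(T) \cong \coind_e^G(T \otimes_R T). \]
Since $R \ra T$ is $H$-Galois and $\coind_e^G$ preserves products, we get
\[ \coind_e^G(T \otimes_R T) \cong \coind_e^G\Big(\prod_H T\Big) \cong \prod_H \coind_e^G(T). \]

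The step I expect to be the main obstacle is checking that this chain of isomorphisms is actually the map $\chi$ of the definition. One way is to evaluate at $G/e$, where everything becomes explicit products of copies of $T$ and $\chi$ is $\prod_G$ of the classical $\chi$. Then use that morphisms out of $\mathrm{Ind}_e^G$, or equivalently maps into the right adjoint $\coind_e^G$, are determined by their adjoint at the free level. Both $\chi$ and the composite iso are maps of Tambara functors, since $\chi$ is built from $\id \Box h$ and $\mu$. So once they agree as Mackey maps, $\chi$ is an isomorphism of Mackey functors and hence of $G$-Tambara functors.
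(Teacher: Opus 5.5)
Your proof is correct. For the key step, condition (3), it takes a different route from the paper.

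\textbf{The paper's route.} It works elementwise. Strickland's description writes elements of a box product as sums of $T_p(t\otimes t')$. Using this, Frobenius reciprocity, and the facts that restriction in $\coind_e^G(T)$ is diagonal and transfer is summation, the paper writes every element of the two- and three-fold box products as a transfer from $G/e$. It computes the coequalizer there as $(\prod_G T)\otimes_{\prod_G R}(\prod_G T)\cong\prod_G(T\otimes_R T)$. It then asserts that the free level determines the box product.

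\textbf{Your route.} You use the projection formula $\mathrm{Ind}_e^G(M)\Box\und{N}\cong\mathrm{Ind}_e^G(M\otimes i_e^G\und{N})$, the induced-module counterpart of the Lewis formula recalled in Section~\ref{sec:fixbox}, together with exactness of $\mathrm{Ind}_e^G$. This identifies the relative box product with $\coind_e^G(T\otimes_R T)$ at every level at once. You then identify the isomorphism with $\chi$ via the adjunction $i_e^G\dashv\coind_e^G$.

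\textbf{What each buys.} Your route is more rigorous exactly where the paper is brief. Knowing that transfers from $G/e$ generate only shows that each level is a quotient of the free level. The claim that the free level determines the box product is precisely what the projection formula supplies. The paper also leaves implicit that its isomorphism is $\chi$. The paper's route, in turn, is more concrete and never needs to check that the projection formula is compatible with the module structures.

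\textbf{One point to spell out.} In your version, that compatibility deserves an explicit line: $\prod_G R$ acts on the left factor $T$ through the $e$-coordinate. This holds because the projection-formula isomorphism is adjoint to $\eta\otimes\id$, where $\eta\colon T\ra\prod_G T$ is the inclusion of the $e$-coordinate, and $\eta(t)\cdot(r_g)_g=\eta(t r_e)$.

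Your handling of the monomorphism and fixed-point conditions, via right adjoints preserving monomorphisms and levelwise limits, is equivalent to the paper's direct levelwise computation.
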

\begin{proof}
As $\coind_e^G$ is a functor from the category of commutative rings to
the category of $G$-Tambara functors, the $H$-action on $T$ by
$R$-algebra maps is transformed into an $H$-action on $\coind_e^G(T)$
by $\coind_e^G(R)$-algebra maps.

As $\coind_e^G(T)(G/K) = T(\bigsqcup_{[G:K]} e/e) = \prod_{[G:K]} T$,
the componentwise $H$-action yields 
\[ \coind_e^G(T)(G/K)^H \cong \coind_e^G(R)(G/K). \]

The co-induction functor is lax symmetric  monoidal and the left 
$\coind_e^G(R)$-module structure of $\coind_e^G(T)$ is induced by the
composite
\[ \coind_e^G(R) \Box \coind_e^G(T) \ra \coind_e^G(R \otimes T) \ra \coind_e^G(T)\] 
where the first map comes from the lax symmetric monoidality of
$\coind_e^G$, noting that the tensor product is the box product of
$e$-Tambara functors. The second map is the map induced by the
$R$-module structure of $T$. The right module structure can be
described similarly.

We claim that $\coind_e^G(T) \Box_{\coind_e^G(R)} \coind_e^G(T) \cong \coind_e^G(T \otimes_R T)$. The relative box product is the coequalizer of 
\[ \xymatrix{\coind_e^G(T) \Box \coind_e^G(R) \Box \coind_e^G(T)
    \ar@<0.5ex>[r]^(0.6)\ell \ar@<-0.5ex>[r]_(0.6)r &
    \coind_e^G(T) \Box \coind_e^G(T).  } \]
Again, we use Strickland's description of elements in box products, so at level $G/U$ for some $U < G$ of 
$\coind_e^G(T) \Box \coind_e^G(T)$  we can represent elements as $T_p(t \otimes t')$ with $t,t' \in \coind_e^G(T)(G/V)$ for some $V < U$ and $p \colon G/V \ra G/U$. Starting at the free level we have
\[ \coind_e^G(T) \Box \coind_e^G(T)(G/e) = (\prod_G T) \otimes (\prod_G T)\]
and we consider the element $(t,t,\ldots,t) \otimes (t',0,\ldots,0) \in \prod_G T$. As the transfer in a co-induction Tambara functor just induces addition
and as restriction gives a diagonal map, we obtain with Frobenius reciprocity 
\[ t \otimes t' = t \otimes \tr(t',0,\ldots,0) = \tr(\res(t) \otimes (t',0,\ldots,0)) \]
and hence we can represent every element in every level of the box product as
a transfer coming from the free level and at the free level, we just have $(\prod_G T) \otimes (\prod_G T)$.

A similar argument shows that every element at every level of the three-fold box product $\coind_e^G(T) \Box \coind_e^G(R) \Box \coind_e^G(T)$ can be written as the transfer of an element coming from the free level and there we get
\[ (\prod_G T) \otimes (\prod_G R) \otimes (\prod_G T).\]

The coequalizer at the free level is
\[ (\prod_G T) \otimes_{(\prod_G R)} \otimes (\prod_G T)\]
and coequalizing the $\prod_G R$-module structure kills mixed terms, so we are left with $\prod_G (T \otimes_R T)$.

As $\chi \colon T \otimes_R T \ra \prod_H T$ is an isomorphism of
commutative rings, $\prod_G \chi$ is an isomorphism as well. As the free level determines the box products for co-inductions from the trivial group to $G$, we get
\[\coind_e^G(T) \Box_{\coind_e^G(R)} \coind_e^G(T) \cong \coind_e^G(T \otimes_R T) \cong \coind_e^G(\prod_H R)\] and as co-induction is a right adjoint, it commutes with products. 
\end{proof}

Assume now that $\mathbb{F}$ is an algebraically closed field and that $K$ is a field such that $K \subset \mathbb{F}$ is a
(possibly non-finite) Galois extension. Then we can write $\mathbb{F}$ as a filtered colimit
\[ \mathbb{F} = \bigcup_{\stackrel{K \subset L \text{Galois}}{[L:K] < \infty}} L .\]

For all these  \emph{finite} extensions $K \subset L$ with Galois group
$H = \mathrm{Gal}(L/K)$ we obtain a corresponding $\mathrm{Gal}(L/K)$-Galois extension of 
$G$-Tambara functors $\coind_e^G(K) \ra \coind_e^G(L)$ for all finite
groups $G$. Filtered colimits are built levelwise in Tambara functors and the co-induction functor
$\coind_e^G$ commutes with filtered colimits. Therefore we can express $\coind_e^G\mathbb{F}$ as a filtered colimit  
\[ \coind_e^G\mathbb{F} = \colim\limits_{\stackrel{K \subset L \text{Galois}}{[L:K] < \infty}} \coind_e^G(L).\]

\begin{rem}
In the non-equivariant Galois theory of commutative rings, $H$-Galois
extensions of the form $R \ra \prod_H R$ are frowned upon: the target
ring has non-trivial idempotents and this non-connectedness creates
problems for instance if one wants to define what a separable closure
is. But Schuchardt, Spitz, and Wisdom show that a Nullstellensatzian object
in the category of commutative $H$-rings is precisely an $H$-ring of
the form $\text{Fun}(H,\mathbb{F}) = \prod_H \mathbb{F}$ where
$\mathbb{F}$ is algebraically closed \cite[Theorem
5.13]{ssw}. Adjunction gives a canonical map $\mathbb{F} \ra \prod_H
\mathbb{F}$ where we forget about the $H$-action in the target. The
map, however, is the diagonal map which yields the $H$-Galois
extension of commutative rings $\mathbb{F} \ra \prod_H \mathbb{F}$. 
\end{rem}

\end{document}